\newcommand{\rmnum}[1]{\romannumeral #1}
\newcommand{\Rmnum}[1]{\expandafter\@slowromancap\romannumeral #1@}
\newtheorem{lemma}{Lemma}[section]
\newtheorem{theorem}{Theorem}[section]
\newtheorem{definition}{Definition}[section]
\numberwithin{equation}{section}
\journal{ }
\begin{document}

\begin{frontmatter}



\title{A tensor SVD-like decomposition based on the semi-tensor product of tensors}


\author[add1]{Zhuo-Ran Chen}
\ead{20zrchen@stu.edu.cn}
\author[add2]{Seak-Weng Vong}
\ead{swvong@um.edu.mo}
\author[add1]{Ze-Jia Xie\corref{cor1}}
\ead{zjxie@stu.edu.cn}
\cortext[cor1]{Corresponding author}
\address[add1]{Department of Mathematics, College of Science, Shantou University, Shantou, 515063, China.}
\address[add2]{Department of Mathematics, Faculty of Science and Technology, University of Macau, Macao, China.}

\tnotetext[label1]{The second author was supported by the research grant MYRG2020-00035-FST from University of Macau. The third author was supported by the National Natural Science Foundation of China (No. 11801074), key research projects of general universities in Guangdong Province (No. 2019KZDXM034), basic research and applied basic research projects in Guangdong Province (Projects of Guangdong, Hong Kong and Macao Center for Applied Mathematics) (No. 2020B1515310018), and the Shantou University Start-up Funds for Scientific Research (No. NTF19035).}
\begin{abstract}
\indent In this paper, we define a semi-tensor product for third-order tensors. Based on this definition, we present a new type of tensor decomposition strategy and give the specific algorithm. This decomposition strategy actually generalizes the tensor SVD based on semi-tensor product. Due to the characteristic of semi-tensor product for compressing the data scale, we can therefore achieve data compression in this way. Numerical comparisons are given to show the advantages of this decomposition.
\end{abstract}



\begin{keyword}


Semi-tensor product of tensors \sep Tensor decomposition  \sep Singular value decomposition \sep Third-order tensors

\MSC 15A69\sep 65F99
\end{keyword}

\end{frontmatter}


\section{Introduction}

Nowadays, many kinds of fields need to collect and apply a large amount of data, such as image and video processing, medical treatment and engineering. In many cases, data are multidimensional, such as the storage of color pictures, video clips, etc. However, two-dimensional matrices are not enough when analyzing and processing these data, tensors are therefore introduced to analyze multidimensional data. From that, the storage and decomposition of tensor is a very important research content. In practical application, we often need to store and process the huge number of data, so how to reduce the storage space and improve operation speed is also a very important research area for us.\\
\indent CANDECOMP/PARAFAC (CP) \cite{RePEc:spr:psycho:v:35:y:1970:i:3:p:283-319, harshman1970foundations} and Tucker \cite{tucker1966some} decompositions are two well-known tensor decomposition strategies. They are higher-order extensions of the matrix singular value decomposition (SVD). The CP model can decompose the target tensor into the sum of rank-one tensors. The Tucker decomposition applies the n-mode multiplication of tensors to decompose a target tensor into a core tensor multiplied by some matrices along their modes. In fact, many other tensor decompositions \cite{kolda2009tensor} have been developed, including INDSCAL \cite{RePEc:spr:psycho:v:35:y:1970:i:3:p:283-319}, PARAFAC2 \cite{harshmannote}, CANDELINC \cite{douglas1980candelinc}, DEDICOM \cite{harshman1978models}, PARATUCK2 \cite{harshman1996uniqueness}, and so on. 
In 2008, Kilmer, Martin, and Perrone \cite{t-product} presented a so-called t-product and developed a new decompositon based on this t-product. Such decomposition can write a third-order tensor as products of three other third-order tensors. In order to define this new notion, they give a new definition of tensor-tensor multiplication firstly. This decomposition called T-SVD, is analogous to the SVD in the matrix case, and it could be used in field of data compression.\\ 
\indent In this paper, we define a new tensor multiplication for third-order tensors which extends the matrix semi-tensor product to tensors. Besides, we introduce a new tensor decomposition algorithm based on the semi-tensor product which can decompose the target tensor into three other tensors and multiply. We first give the matrix SVD based on semi-tensor product, and then generalize it to tensor. This decomposition will reduce the storage of data and improve the speed of operation to a certain extent, especially for the tensors with huge amounts of data. \\
\indent Our paper is organized as follows. In Section 2, we introduce the notation and preliminaries that we need to use throughout the paper. In Section 3, we describe the new semi-tensor product that we define in detail, including some useful properties, lemmas, and theorems. In Section 4, we will use the tensor product introduced in Section 3 to decompose a third-order tensor so that the target third-order tensor can be written in the form of three tensor products and multiply. In Section 5, we give the application in image processing by using the algorithm introduced in this paper. And conclusions are made in Section 6.

\section{Notations and Preliminaries}\label{sec:2} 
In this section, we will give a summary of notations and basic preliminaries that we will use. The arrangement of data in the same direction is called a one-way array. A tensor is a multi-way array representation of data, it is a multi-way array or a multi-dimensional array, which is an extension of a matrix. The tensor represented by a $p$-way array is a tensor of order $p$. An order-$p$ tensor $\mathcal{A}$ can be written as $\mathcal{A}= \left( {a_{i_{1}i_{2} \dots i_{p}}} \right)  \in \mathbb{C}^{n_{1}\times n_{2} \times  \dots \times n_{p}}$ \cite{kiers2000towards}, where $a_{i_{1}i_{2} \dots i_{p}}$ denotes the $\left( {i_{1}i_{2} \dots i_{p}}\right) $-th entry of  $\mathcal{A}$. Thus, a matrix can be considered as a second-order tensor, and a vector is a first-order tensor. \\ 
\indent For a matrix $A \in \mathbb{C}^{n_{1} \times n_{2}}$, we use $A^{T}$ and $A^{H}$ to denote the transpose and conjugate transpose of $A$, respectively. For a third-order tensor $\mathcal{A}=(a_{ijk}) \in \mathbb{C}^{n_{1}\times n_{2}\times n_{3}}$, we can use the set of matrices to denote its horizontal, lateral, and frontal slices. The $i$-th horizontal slice, $j$-th lateral slice, and $k$-th frontal slice of the third-order tensor $\mathcal{A} $ can be denoted by $\mathcal{A}\left( i,:,:\right) $,  $\mathcal{A}\left( :,j,:\right) $, and  $\mathcal{A}\left( :,:,k\right) $, respectively. Each slice is a matrix actually. The slice graph of a third-order tensor is shown in Figure \ref{figure1}.\\
\begin{figure}[h] 
	\begin{center}
		\begin{tikzpicture}[scale=1.1]
		\node[scale=1.3] at (0,0.5) {
			\begin{tikzpicture}
			\draw (0,0)coordinate(b) -- ++(1,0)coordinate(c) --++(50:0.5)coordinate(a) -- ++(-1,0)coordinate(d) -- cycle;
			\foreach \x in{a,b,c}
			\draw (\x) -- ++(0,-1);
			\draw (0,-1) -- ++(1,0) -- ++(50:0.5);
			\node[scale=0.35] at (-0.1,-0.5) {\rotatebox{90}{$i=1,2, \dots, n_{1}$}};
			\node[scale=0.35] at (0.45,-1.15) {$j=1,2, \dots, n_{2}$};
			\node[scale=0.35] at (1.33,-0.75) {\rotatebox{55}{$k=1,2, \dots, n_{3}$}};
			\node[scale=0.5] at (0.7,-1.7) {$\mathcal{A}=(a_{ijk})\in \mathbb{C}^{n_{1}\times n_{2}\times n_{3}}$};
			\end{tikzpicture}};
		
		\node[scale=2] at (-4,-4) {
			\begin{tikzpicture}
			\foreach \x in {-1,-0.2,0}
			\draw[fill=white] (0,\x) -- ++(0.75,0) -- ++(60:0.35) -- ++(-0.75,0) -- cycle;
			\node[scale=0.3] at (0.4,0.15) {$\mathcal{A}(1,:,:)$};
			\node[scale=0.3] at (0.4,-0.1) {$\mathcal{A}(2,:,:)$};
			\node[scale=0.6] at (0.4,-0.4) {$\vdots$};
			\node[scale=0.3] at (0.4,-0.85) {$\mathcal{A}(n_{1},:,:)$};
			\node[scale=0.5] at (0.4,-1.5) {Horizontal slices};  
			\end{tikzpicture}};
		
		\node[scale=1.5] at (0,-4.2) {
			\begin{tikzpicture}
			\draw[fill=white] (-0,0) -- ++(30:0.65) -- ++(90:0.8) -- ++(210:0.65) -- cycle;
			\draw[fill=white] (0.4,0.1)  -- ++(30:0.65) -- ++(90:0.8) -- ++(210:0.65) -- cycle;
			\draw[fill=white] (1.5,0.13)  -- ++(30:0.65) -- ++(90:0.8) -- ++(210:0.65) -- cycle;
			\node[scale=0.35,xscale=0.8] at (0.2,0.65) {$\mathcal{A}(:,1,:)$};
			\node[scale=0.35,xscale=0.9] at (0.7,0.65) {$\mathcal{A}(:,2,:)$};
			\node[scale=0.35,xscale=0.9] at (1.79,0.65) {$\mathcal{A}(:,n_{2},:)$};
			\node[scale=0.7] at (1.25,0.65) {$\dots$};
			\node[scale=0.65] at (1,-0.85) {Lateral slices};  
			\end{tikzpicture}  
		};
		
		\node[scale=2.4] at (4,-4){
			\begin{tikzpicture}
			\draw[fill = white] (0.6,0.6) rectangle ++(0.5,0.5);
			\draw[fill = white] (0.2,0.2) rectangle ++(0.5,0.5);
			\draw[fill = white] (0,0) rectangle ++(0.5,0.5);
			\node[scale=0.3] at (0.25,0.25) {$\mathcal{A}(:,:,1)$};
			\node[scale=0.3] at (0.45,0.57) {$\mathcal{A}(:,:,2)$};
			\node[scale=0.3] at (0.85,0.85) {$\mathcal{A}(:,:,n_{3})$};
			\node[rotate=45,scale=0.5] at (0.8,0.5) {$\dots$};
			\node[scale=0.4] at (0.55,-0.4) {Frontal slices};    
			\end{tikzpicture}
		};
		
		\node[scale=1.2] at (0,-2) {$\Downarrow$};
	\end{tikzpicture}
\end{center}
\caption{Slice maps of an $n_{1}\times n_{2}\times n_{3}$ third-order tensor $\mathcal{A}$.}
\label{figure1}
\end{figure}
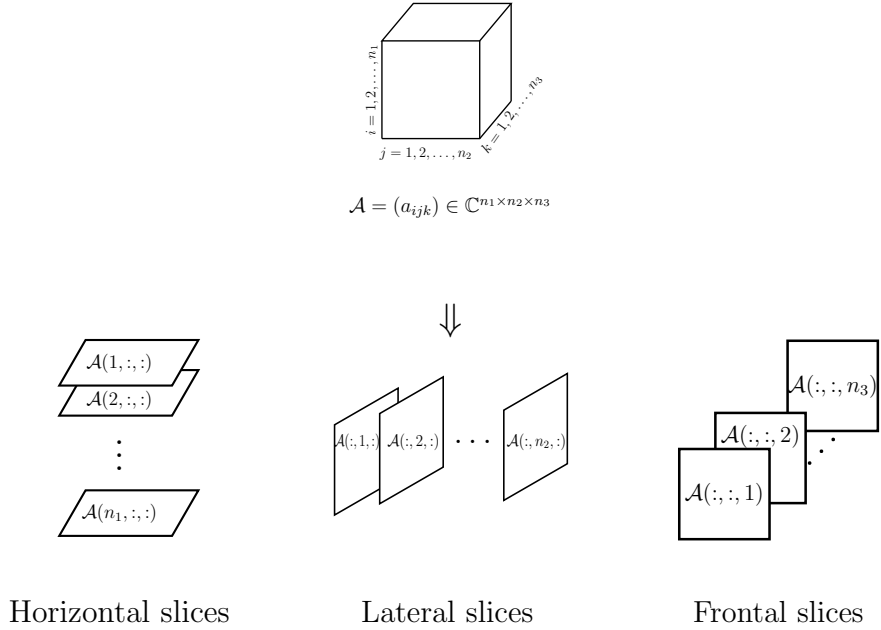
\indent In fact, if $\mathcal{A} \in \mathbb{C}^{n_{1}\times n_{2}\times n_{3}}$ is a third-order tensor, then it has $n_{3}$ frontal slices, and each frontal slice is an $n_{1}\times n_{2}$ matrix. These $n_{3}$ frontal slices of $\mathcal{A}$ can be represented by $\mathit{A}_{1}=\mathcal{A}\left( :,:,1\right)$, $\mathit{A}_{2}=\mathcal{A}\left( :,:,2\right)$, $\dots$,  $\mathit{A}_{n_{3}}=\mathcal{A}\left( :,:,n_{3}\right)$  \cite{kolda2006multilinear}. We give a $2\times2\times2$ example in Figure \ref{figure2}.\\	
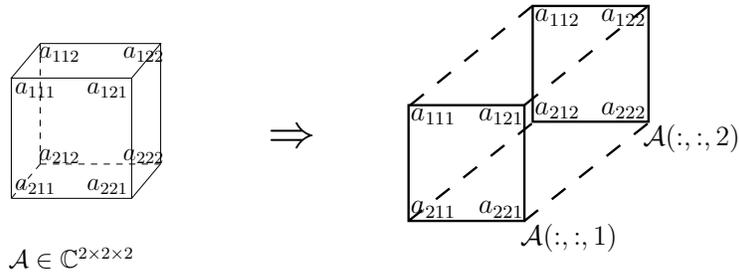
\begin{figure}[hb] 
	\begin{center}
		\begin{tikzpicture}[scale=1.3]
		\node[scale=0.8] at (6,0.5) {
			\begin{tikzpicture}
			\draw (0,0)coordinate(b) -- ++(2,0)coordinate(c) --++(50:0.75)coordinate(a) -- ++(-2,0)coordinate(d) -- cycle;
			\foreach \x in{a,b,c}
			\draw (\x) -- ++(0,-2);
			\draw (0,-2) -- ++(2,0) -- ++(50:0.75);
			\node[] at (0.4,-0.2) {$a_{111}$};
			\node[] at (1.6,-0.2) {$a_{121}$};
			\node[] at (0.4,-1.8) {$a_{211}$};
			\node[] at (1.6,-1.8) {$a_{221}$};
			\node[] at (0.8,0.4) {$a_{112}$};
			\node[] at (2.2,0.4) {$a_{122}$};
			\node[] at (0.8,-1.3) {$a_{212}$};
			\node[] at (2.2,-1.3) {$a_{222}$};
			\draw[dashed] (0.48,0.57)  -- (0.48,-1.43);
			\draw[dashed] (0.48,-1.43)  -- (2.47,-1.43);
			\draw[dashed]  (0.48,-1.43) -- (0,-2);
			\node[] at (1,-3) {$\mathcal{A}\in \mathbb{C}^{2\times2\times2}$};
			\end{tikzpicture}};
		\node[scale=2.2] at (11,0.8){
			\begin{tikzpicture}			
			\draw[fill = white] (0.75,0.6) rectangle ++(0.7,0.7);
			\draw[fill = white] (0,0) rectangle ++(0.7,0.7);
			\node[scale=0.4] at (0.15,0.63) {$a_{111}$};
			\node[scale=0.4] at (0.56,0.63) {$a_{121}$};
			\node[scale=0.4] at (0.15,0.07) {$a_{211}$};
			\node[scale=0.4] at (0.56,0.07) {$a_{221}$};
			\node[scale=0.4] at (0.9,1.23) {$a_{112}$};
			\node[scale=0.4] at (1.3,1.23) {$a_{122}$};	
			\node[scale=0.4] at (0.9,0.67) {$a_{212}$};			
			\node[scale=0.4] at (1.3,0.67) {$a_{222}$};	
			\draw[dashed] (0.01,0.7) -- (0.81,1.35);	
			\draw[dashed] (0.7,0.7) -- (1.51,1.35);	
			\draw[dashed] (0.7,0) -- (1.51,0.64);	
			\draw[dashed] (0,0) -- (0.81,0.64);	
			\node[scale=0.4] at (0.97,-0.1) {$\mathcal{A}(:,:,1)$};
			\node[scale=0.4] at (1.71,0.5) {$\mathcal{A}(:,:,2)$};
			\end{tikzpicture}
		};			
		\node[scale=1.5] at (8.1,0.7) {$\Rightarrow$};
	\end{tikzpicture}
\end{center}
\caption{The frontal slices of $\mathcal{A} \in \mathbb{C}^{2\times2\times2} $.}
\label{figure2}
\end{figure}
\indent	The semi-tensor product of matrices was proposed by Cheng \cite{cheng2007survey}. As we know, the multiplication of two matrices requires the strict matching condition. While the semi-tensor product release the requirement for dimensions, it only requires dimensions to be multiples. See Definition \ref{definition 2.2} for more detail. The semi-tensor product includes left semi-tensor product and right semi-tensor product. We only discuss the left semi-tensor product in this paper.
\begin{definition}  \label{definition 2.1}
	(Left Semi-tensor Product of Vectors \cite{cheng2007survey}) Let $\mathbf{x}=\left[\mathit{x_{1}, x_{2}, \dots, x_{p}}\right]^{T}$ $\in \mathbb{C}^{p} $ and $\mathbf{y}=\left[\mathit{y_{1}, y_{2}, \dots, y_{q}}\right]^{T}\in \mathbb{C}^{q} $. Then we define left semi-tensor product of two vectors  $\mathbf{x}^{T}$ and $\mathbf{y}$ in the following.\\
	$(\rmnum{1}) $  If $p=nq, n\in \mathbb{Z}^{+}$, then we divide the row vector  $\mathbf{x}^{T}$ into  $q$ blocks: $\mathbf{x}_{1}^{T}, \mathbf{x}_{2}^{T}, \dots, \mathbf{x}_{q}^{T}$. Each block $\mathbf{x}_{i}^{T}$ is an $n$-dimensional row vector, and is multiplied by ${y}_{i}$, respectively, and then add all. We can get the left semi-tensor product of $\mathbf{x}^{T}$ and $\mathbf{y}$ which is an $n$-dimensional row vector. It can be represented as \\
	\begin{equation*}
	\mathbf{x}^{T} \ltimes \mathbf{y} = \sum_{i=1}^{q}\mathbf{x}_{i}^{T}{y}_{i}\in \mathbb{C}^{1\times n}.
	\end{equation*}		 
	$(\rmnum{2})$  If $p=\dfrac{1}{n}q, n\in \mathbb{Z}^{+}$, then we divide the column vector  $\mathbf{y}$ into  $p$ blocks: $\mathbf{y}_{1}, \mathbf{y}_{2}, \dots, \mathbf{y}_{p}$. Each block $\mathbf{y}_{i}$ is an $n$-dimensional column vector, and is multiplied by ${x}_{i}$, respectively, and then add all. We can get the left semi-tensor product of $\mathbf{x}^{T}$ and $\mathbf{y}$ which is an $n$-dimensional column vector. It can be represented as \\
	\begin{equation*}
	\mathbf{x}^{T} \ltimes \mathbf{y} = \sum_{i=1}^{p}{x}_{i}\mathbf{y}_{i}\in \mathbb{C}^{n}.
	\end{equation*}
\end{definition}
\begin{definition} \label{definition 2.2}
	(Left Semi-tensor Product of Matrices \cite{cheng2007survey})
	Let  $A\in \mathbb{C}^{m\times n}$, $B\in \mathbb{C}^{s\times t}$. If $n=ks$, or $ n=\dfrac{1}{k}s$, $ k\in \mathbb{Z}^{+} $, then $D=A\ltimes B$ is a block matrix which has $m\times t$ blocks, called the left semi-tensor product of $A$ and $B$. Each block can be represented as  
	\begin{equation*}
	D^{ij}={A}^{i}\ltimes {B}_{j},\quad i=1,2, \dots, m, j=1,2, \dots, t,
	\end{equation*}
	where ${A}^{i}$ is the i-th row of $A$, ${B}_{j}$ is the j-th column of $B$.
\end{definition}
\indent  Only when  $n$ and $s$ are integer multiples, Definition \ref{definition 2.2} is well-defined. If $n=ks$, $D=A\ltimes B$ is an $m\times kt$ matrix; If $ n=\dfrac{1}{k}s$, $D=A\ltimes B$ is a $km\times t$ matrix; If $n=s$, the left semi-product becomes the general matrix product.
\begin{definition} \label{definition 2.3}
	(Kronecker Product of Matrices \cite{kroneckerproduct})
	Suppose $A$ is an $m\times n$ matrix, and $B$ is an $s\times t$ natrix. Then $C=A\otimes B$ is an $ms \times nt$ matrix, called the Kronecker product of $A$ and $B$. It is defined as
	\begin{equation*}
	A\otimes B=
	\left[
	\begin{matrix}
	a_{11}B & a_{12}B & \cdots & a_{1n}B \\
	a_{21}B & a_{22}B & \cdots & a_{2n}B \\
	\vdots & \vdots& \ddots & \vdots  \\
	a_{m1}B & a_{m2}B & \cdots & a_{mn}B 
	\end{matrix}
	\right]. 
	\end{equation*}
\end{definition}

We give some basic properties of the matrix Kronecker product in the following.
\begin{lemma} (\cite{kroneckerproduct, van2000ubiquitous})\label{lemma 2.1}
	Suppose $A,B,C,D$ are four matrices of proper dimensions, and $\alpha$ is a scalar. We have \\ 
	$(\rmnum{1})$  $AB\otimes CD=(A\otimes C)(B\otimes D);$\\
	$(\rmnum{2})$  $A \otimes\left( B\pm C\right) =(A\otimes B)\pm(A\otimes C)$, $\left( B\pm C\right) \otimes A = B \otimes A \pm C \otimes A;$\\
	$(\rmnum{3})$  $ (A \otimes B)^{T}=A^{T} \otimes B^{T}$, $ (A \otimes B)^{H}=A^{H} \otimes B^{H};$\\
	$(\rmnum{4})$  $ (A \otimes B)^{-1}=A^{-1} \otimes B^{-1}$ if $A$ and $B$ are invertible;\\
	$(\rmnum{5})$  $\left( A \otimes B\right)\otimes C =A\otimes (B\otimes C);$ \\ 
	$(\rmnum{6})$  $\left(\alpha A \right) \otimes B  =A \otimes \left(\alpha B\right)=\alpha \left( A \otimes B\right).$  	
\end{lemma}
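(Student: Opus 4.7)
The plan is to verify each of the six identities by working directly from the block-matrix form $A\otimes B=(a_{ij}B)$ given in Definition \ref{definition 2.3}, together with standard block-matrix arithmetic. Items (ii), (v), and (vi) are essentially bookkeeping: both sides are block matrices whose $(i,j)$-block agrees entry-by-entry. For instance, (vi) follows because the $(i,j)$-block of $(\alpha A)\otimes B$ is $(\alpha a_{ij})B$, which equals $a_{ij}(\alpha B)=\alpha(a_{ij}B)$; likewise for (ii), the $(i,j)$-block of $A\otimes(B\pm C)$ is $a_{ij}(B\pm C)=a_{ij}B\pm a_{ij}C$, and for (v), both $(A\otimes B)\otimes C$ and $A\otimes(B\otimes C)$ yield the same triply-indexed block $a_{ij}b_{k\ell}C$ after unravelling. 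For (iii), the transpose reorders blocks from $(a_{ij}B)$ to $(a_{ji}B^{T})$, which is exactly $A^{T}\otimes B^{T}$; the conjugate transpose is identical after conjugating scalars.

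The heart of the lemma is (i), the \emph{mixed product property}, and this is the step I would treat most carefully since (iv) will reduce to it. Suppose $A\in\mathbb{C}^{m\times n}$, $B\in\mathbb{C}^{n\times p}$, $C\in\mathbb{C}^{r\times s}$, $D\in\mathbb{C}^{s\times t}$, so that the product $AB\otimes CD$ is defined and has size $mp\times rt$, while $(A\otimes C)(B\otimes D)$ is a product of an $mr\times ns$ matrix by an $ns\times pt$ matrix, of matching size. Using the block form, the $(i,k)$-block of $(A\otimes C)(B\otimes D)$ is obtained by block multiplication as
\[
\sum_{j=1}^{n}(a_{ij}C)(b_{jk}D)=\Bigl(\sum_{j=1}^{n}a_{ij}b_{jk}\Bigr)\,CD=(AB)_{ik}\,CD,
\]
which is exactly the $(i,k)$-block of $AB\otimes CD$. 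Equality of every block gives (i).

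Once (i) is in hand, (iv) is a one-line consequence: assuming $A$ and $B$ are invertible,
\[
(A\otimes B)(A^{-1}\otimes B^{-1})=(AA^{-1})\otimes(BB^{-1})=I\otimes I=I,
\]
and symmetrically on the other side, so $(A\otimes B)^{-1}=A^{-1}\otimes B^{-1}$.

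The main obstacle is purely notational: in the proof of (i) one has to distinguish cleanly between the ``outer'' block indices $i,k$ (ranging over the sizes of $A$ and $B$) and the ``inner'' indices that live inside each $r\times s$ block $C$ or $s\times t$ block $D$, so that the shapes line up and the block multiplication is actually legitimate. No deeper idea is required — the argument is just a careful application of the definition — but the index bookkeeping has to be watertight for the block computation to be convincing.
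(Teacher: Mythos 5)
Your proof is correct and is the standard textbook argument; the paper itself offers no proof of this lemma, simply citing \cite{kroneckerproduct, van2000ubiquitous}, and the block-matrix computation you give for the mixed product property is exactly the one found in those references, with (iv) then following in one line. The only blemish is a dimension typo in (i): with $A\in\mathbb{C}^{m\times n}$, $B\in\mathbb{C}^{n\times p}$, $C\in\mathbb{C}^{r\times s}$, $D\in\mathbb{C}^{s\times t}$, the product $AB\otimes CD$ has size $mr\times pt$ (not $mp\times rt$); this does not affect the validity of the block computation or the conclusion.
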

\indent We use  $I_{n}$ to denote the $n \times n$ identity matirx, then in Definition \ref{definition 2.2}, the semi-tensor product can be represented by Kronecker product as follows.    
\begin{lemma} (\cite{cheng2012introduction}) \label{lemma 2.2}
	Let $A$ be an $m \times n$ matrix, and $B$ be an $s \times t$ matrix. Suppose that they have proper dimensions such that the semi-tensor product is well-defined. Then we have \\
	$(\rmnum{1})$ If $n=ks$, $ k\in \mathbb{Z}^{+} $, then $A\ltimes B=A(B\otimes I_{k} )$ is an $m\times kt$ matrix;\\
	$(\rmnum{2})$If $ n=\dfrac{1}{k}s$, $ k\in \mathbb{Z}^{+} $, then $A\ltimes B=(A\otimes I_{k} )B$ is a $km\times t$ matrix.
\end{lemma}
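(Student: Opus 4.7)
The plan is to verify both identities by a direct block-structure comparison: compute the $(i,j)$-block of the left-hand side using Definition \ref{definition 2.2} together with Definition \ref{definition 2.1}, then compute the corresponding block of the right-hand side using the explicit form of $B \otimes I_k$ (or $A \otimes I_k$), and check that the two expressions agree entry by entry.

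For part $(\rmnum{1})$, I first note that $A \ltimes B$, by Definition \ref{definition 2.2}, is partitioned into $m \times t$ blocks of size $1 \times k$, with $(i,j)$-block equal to $A^i \ltimes B_j$. Since $A^i$ has length $n = ks$ and $B_j$ has length $s$, Definition \ref{definition 2.1}$(\rmnum{1})$ gives $A^i \ltimes B_j = \sum_{l=1}^{s} b_{lj}\, A^i_l$, where $A^i_l$ denotes the $l$-th block of length $k$ in $A^i$. On the right, $B \otimes I_k$ is an $n \times kt$ matrix whose $j$-th block column (of width $k$) is the stack $[b_{1j} I_k;\, b_{2j} I_k;\, \dots;\, b_{sj} I_k]$. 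Multiplying $A^i$ (viewed as the concatenation $[A^i_1,\,A^i_2,\,\dots,\,A^i_s]$) into this block column produces exactly $\sum_{l=1}^{s} A^i_l (b_{lj} I_k) = \sum_{l=1}^{s} b_{lj} A^i_l$, matching the left side block by block.

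For part $(\rmnum{2})$, I use the symmetric partitioning: $A \ltimes B$ now has $(i,j)$-block of size $k \times 1$ equal to $A^i \ltimes B_j$, and since $A^i$ has length $n$ while $B_j$ has length $s = kn$, Definition \ref{definition 2.1}$(\rmnum{2})$ yields $\sum_{l=1}^{n} a_{il} B_{j,l}$, where $B_{j,l}$ is the $l$-th block of length $k$ in $B_j$. On the right side, the $i$-th block row of $A \otimes I_k$ (of height $k$) is the row of blocks $[a_{i1} I_k,\, a_{i2} I_k,\, \dots,\, a_{in} I_k]$; multiplying it into $B_j$ (written as the stack of its length-$k$ sub-blocks) gives $\sum_{l=1}^{n} a_{il} I_k\, B_{j,l} = \sum_{l=1}^{n} a_{il} B_{j,l}$, as required. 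The size claims $m \times kt$ and $km \times t$ are then immediate from the dimensions of the Kronecker products.

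Neither part presents a genuine obstacle; the only real care needed is keeping the block indices straight, particularly distinguishing when the ``extra factor'' $I_k$ lands on the $B$-side versus the $A$-side, and confirming that the block partition used in the Kronecker-product expression matches the one prescribed by Definition \ref{definition 2.1}. If anything is delicate, it is writing the argument so that the summation indices in the semi-tensor product and the multiplication-by-blocks of $B \otimes I_k$ or $A \otimes I_k$ are clearly aligned; I would therefore introduce notation for the block partitions of $A^i$ and $B_j$ at the outset and then let the verification proceed in a single displayed computation for each case.
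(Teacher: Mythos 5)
Your verification is correct: both block computations align Definition \ref{definition 2.1} with the explicit block structure of $B\otimes I_k$ and $A\otimes I_k$ exactly as needed, and the dimension counts check out. The paper itself offers no proof of Lemma \ref{lemma 2.2} --- it is quoted from the cited monograph --- so there is nothing to compare against; your direct block-by-block argument is the standard one and is sound as written.
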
 
\begin{lemma} (\cite{cheng2012introduction}) \label{lemma 2.3}
	Suppose $A$, $B$, $C$ are matrices which have proper dimensions such that the $\ltimes$ is well-defined. Then we have\\
	\begin{equation*}
	(A\ltimes B)\ltimes C = A \ltimes (B\ltimes C).
	\end{equation*}     
\end{lemma}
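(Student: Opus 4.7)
The plan is to convert both sides of the claimed identity into products of matrices and identity-Kronecker factors via Lemma~\ref{lemma 2.2}, and then reduce them to a common normal form using Lemma~\ref{lemma 2.1}. Write $A \in \mathbb{C}^{m\times n}$, $B \in \mathbb{C}^{s \times t}$, $C \in \mathbb{C}^{p \times q}$. For $A \ltimes B$ to be defined, either $n = \alpha s$ or $s = \alpha n$ with $\alpha \in \mathbb{Z}^{+}$; likewise for $B \ltimes C$, either $t = \beta p$ or $p = \beta t$. This gives four dimensional regimes, but because the two branches of Lemma~\ref{lemma 2.2} are mirror images (``padding with $I_k$'' on the right versus on the left), the four regimes collapse to two representative types.

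In the matched regime ($n = \alpha s$ and $t = \beta p$), both inner products use Lemma~\ref{lemma 2.2}(I), giving $A \ltimes B = A(B \otimes I_\alpha)$ and $B \ltimes C = B(C \otimes I_\beta)$. Applying Lemma~\ref{lemma 2.2}(I) once more to the outer products yields
$(A \ltimes B) \ltimes C = A(B \otimes I_\alpha)(C \otimes I_{\alpha\beta})$ and $A \ltimes (B \ltimes C) = A\bigl(\,(B(C \otimes I_\beta)) \otimes I_\alpha\,\bigr).$
I would then expand the second expression by Lemma~\ref{lemma 2.1}(I), applied with $I_\alpha$ in the second slot, to get $A(B \otimes I_\alpha)((C \otimes I_\beta) \otimes I_\alpha)$, and finish by Lemma~\ref{lemma 2.1}(V) together with $I_\beta \otimes I_\alpha = I_{\alpha\beta}$.

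In the opposite regime (say $n = \alpha s$ but $p = \beta t$), compatibility of the outer semi-tensor products forces either $\alpha \mid \beta$ or $\beta \mid \alpha$, so a further split is needed. In each subcase the same three tools---Lemma~\ref{lemma 2.2}, Lemma~\ref{lemma 2.1}(I) with an identity in one slot, and Kronecker associativity from Lemma~\ref{lemma 2.1}(V)---reduce both sides to a single expression of the form $(A \otimes I_u)(B \otimes I_v)(C \otimes I_w)$, for integers $u,v,w$ determined by the divisibility pattern. The remaining regimes where $s \mid n$ is replaced by $n \mid s$ are obtained by interchanging left and right identity-Kronecker factors throughout, so they proceed by the same script.

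The main obstacle is not conceptual but organizational: one must carefully track which side of each factor receives the identity, and verify that the integer indices on the various $I_{(\cdot)}$ agree after simplification. Once the target normal form is fixed, every intermediate step is a single use of Lemma~\ref{lemma 2.1}(I) (with one factor equal to an identity) or Lemma~\ref{lemma 2.1}(V), so nothing beyond Lemmas~\ref{lemma 2.1} and~\ref{lemma 2.2} is required.
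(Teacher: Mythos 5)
Your proposal is correct. Note, however, that the paper itself offers no proof of this lemma at all: it is quoted directly from the reference on the semi-tensor product (Cheng's book) and used as a black box, most importantly inside the proof of Theorem~\ref{theorem 3.1}. So there is nothing in the paper to compare your argument against; what you have written is a self-contained verification from first principles. Your route --- rewrite every $\ltimes$ via Lemma~\ref{lemma 2.2} as an ordinary product with an $I_k$ padded in by a Kronecker factor, then normalize both sides to a common form such as $(A\otimes I_u)(B\otimes I_v)(C\otimes I_w)$ using the mixed-product rule $XY\otimes ZW=(X\otimes Z)(Y\otimes W)$ and Kronecker associativity --- is the standard one, and your case analysis is sound: in the matched regime the identity $((B(C\otimes I_\beta))\otimes I_\alpha)=(B\otimes I_\alpha)(C\otimes I_{\alpha\beta})$ does close the argument, and in the mixed regimes you correctly observe that well-definedness of the outer products forces $\alpha\mid\beta$ or $\beta\mid\alpha$, after which the same two Kronecker identities finish each subcase. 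The only caution is the organizational one you already flag: the hypothesis must be read as requiring \emph{all four} semi-tensor products in the identity to be well-defined, and one must check (as you implicitly do) that both sides then land in the same dimensional branch of Lemma~\ref{lemma 2.2}.
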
     
\noindent Lemma \ref{lemma 2.3} is called the associative law of semi-tensor product of matrices.\\ 
\indent  We know that if $\mathbf{\mathbf{v}}=\left[\mathit{v_{0}},\mathit{v_{1}},\mathit{v_{2}},\mathit{v_{3}} \right] ^{T}$ is a column vector, then \\
\begin{equation*}
\mathrm{circ}\left( \mathbf{v}\right) = 
\left[
\begin{matrix}
\mathit{v_{0}} &\mathit{v_{3}} & \mathit{v_{2}} &\mathit{v_{1}} \\
\mathit{v_{1}} & \mathit{v_{0}} & \mathit{v_{3}} &\mathit{v_{2}} \\
\mathit{v_{2}} & \mathit{v_{1}}& \mathit{v_{0}} & \mathit{v_{3}}  \\
\mathit{v_{3}} & \mathit{v_{2}}& \mathit{v_{1}} & \mathit{v_{0}} 
\end{matrix}
\right] 
\end{equation*}
is a circulant matrix. Note that the matrix is determined by the first column of the vector $ \mathbf{v}$.\\
\indent  Suppose $\mathbf{\mathbf{v}}$  is an $n \times 1$ column vector, then $\mathrm{circ}\left( \mathbf{v}\right)$ is an $n \times n$ circulant matrix, we can use normalized Fourier transform \cite{golub2013matrix} to change this circulant matrix into a diagonal matrix. That is multiplying $n \times n$ Fourier matrix on the left and right sides of the circulant matrix, respectively. If $F_{n}$ is an $n \times n$ Fourier matrix, then $F_{n}\mathrm{circ}\left( \mathbf{v}\right) {F_{n}^{H}}$ is a diagonal matrix, and the diagonal of $F_{n} \mathrm{circ}\left( \mathbf{v}\right) {F_{n}^{H }}$ is the Fourier transform result of the vector $\mathbf{\mathbf{v}}$. Besides, the Fourier transform also can convert a block-circulant matrix into a block-diagonal matrix.\\   
\indent In \cite{t-product}, the $\mathbf{fold}$ and $\mathbf{unfold}$ operators are defined. Suppose $\mathcal{A} \in \mathbb{C}^{n_{1} \times n_{2} \times n_{3}}$ is a third-order tensor, and its frontal slices are denoted by $\mathcal{A}\left( :,:,1\right)$, $\mathcal{A}\left( :,:,2\right)$, $\dots$, $\mathcal{A}\left( :,:,n_{3}\right)$. Then $\mathbf{unfold}$ is defined as
\begin{equation*}
\mathrm{unfold}\left( \mathcal{A}\right)=\left[
\begin{matrix}
\mathcal{A}\left( :,:,1\right)  \\
\mathcal{A}\left( :,:,2\right)  \\
\vdots   \\
\mathcal{A}\left( :,:,n_{3}\right)  
\end{matrix}
\right] \in \mathbb{C}^{n_{1}n_{3} \times n_{2}}.
\end{equation*} 
We can easily see that $ \mathrm{unfold}\left( \mathcal{A}\right)$ is an ${n_{1}n_{3} \times n_{2}}$ matrix. And its schematic is shown in Figure \ref{figure3}. Another operator $\mathbf{fold}$ is the inverse of $\mathbf{unfold}$ which is defined as
\begin{equation*}
\mathrm{fold}\left(\mathrm{unfold}( \mathcal{A})\right)=\mathrm{fold}\left( \left[
\begin{matrix}
\mathcal{A}\left( :,:,1\right)  \\
\mathcal{A}\left( :,:,2\right)  \\
\vdots   \\
\mathcal{A}\left( :,:,n_{3}\right)  
\end{matrix}
\right] \right)=\mathcal{A}.
\end{equation*} 
\begin{figure}[h] 
	\begin{center}
		\begin{tikzpicture}[scale=0.8]
		\node[scale=1.6] at (-1.5,0.5) {
			\begin{tikzpicture}
			\draw (0,0)coordinate(b) -- ++(1,0)coordinate(c) --++(50:0.55)coordinate(a) -- ++(-1,0)coordinate(d) -- cycle;
			\foreach \x in{a,b,c}
			\draw (\x) -- ++(0,-1);
			\draw (0,-1) -- ++(1,0) -- ++(50:0.55);
			\node[scale=0.35] at (-0.1,-0.5) {\rotatebox{90}{$i=1,2, \dots, n_1$}};
			\node[scale=0.35] at (0.45,-1.15) {$j=1,2, \dots, n_2$};
			\node[scale=0.35] at (1.35,-0.75) {\rotatebox{55}{$k=1,2, \dots, n_3$}};
			\node[scale=0.55] at (0.7,-1.7) {$\mathcal{A}=(a_{ijk})\in \mathbb{C}^{n_{1}\times n_{2}\times n_{3}}$};
			\end{tikzpicture}};
		\node[scale=2.7] at (6,0.5){
			\begin{tikzpicture}
			\draw[fill = white] (0,1.8) rectangle ++(0.6,0.6);
			\draw[fill = white] (0,1.2) rectangle ++(0.6,0.6);
			\draw[fill = white] (0,0) rectangle ++(0.6,0.6);
			\node[scale=0.5] at (0.25,0.9) {$\vdots$};
			\node[scale=0.35] at (0.3,0.25) {$\mathcal{A}(:,:,n_{3})$};
			\node[scale=0.35] at (0.3,1.45) {$\mathcal{A}(:,:,2)$};
			\node[scale=0.35] at (0.3,2.05) {$\mathcal{A}(:,:,1)$};
			\node[scale=0.37] at (0.3,-0.4) {$\mathrm{unfold}(\mathcal{A})$};
			\end{tikzpicture}
		};			
		\node[scale=1.5] at (2.5,0.7) {$\Rightarrow$};
	\end{tikzpicture}
\end{center}
\caption{The diagrammatic sketch of $\mathrm{unfold}( \mathcal{A})$ with $\mathcal{A}$ is an $n_{1}\times n_{2}\times n_{3}$ third-order tensor $\mathcal{A}$.}
\label{figure3}
\end{figure}
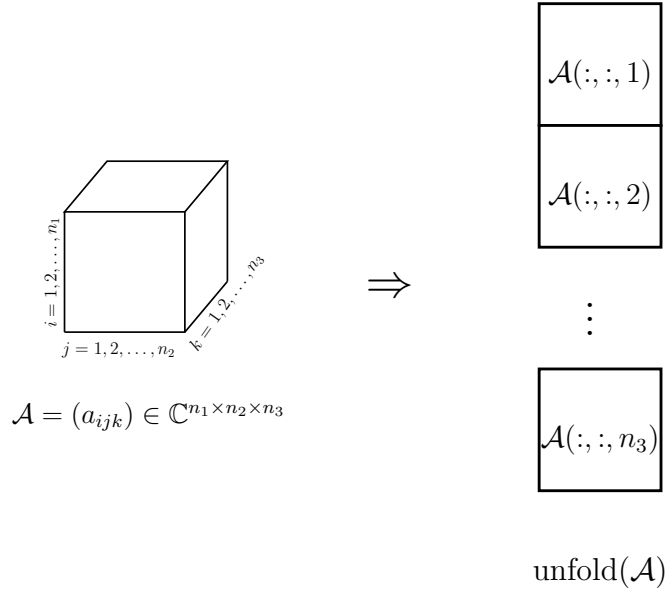
More specifically, we give an example of a $2\times 2 \times 2$ tensor in Figure \ref{figure4}.\\
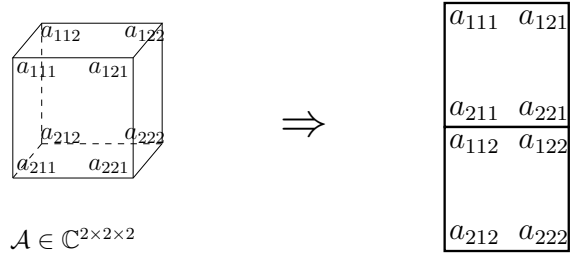
\begin{figure}[h] 
	\begin{center}
		\begin{tikzpicture}[scale=0.7]
		\node[scale=0.8] at (-2,0.5) {
			\begin{tikzpicture}
			\draw (0,0)coordinate(b) -- ++(2,0)coordinate(c) --++(50:0.75)coordinate(a) -- ++(-2,0)coordinate(d) -- cycle;
			\foreach \x in{a,b,c}
			\draw (\x) -- ++(0,-2);
			\draw (0,-2) -- ++(2,0) -- ++(50:0.75);
			\node[] at (0.4,-0.2) {$a_{111}$};
			\node[] at (1.6,-0.2) {$a_{121}$};
			\node[] at (0.4,-1.8) {$a_{211}$};
			\node[] at (1.6,-1.8) {$a_{221}$};
			\node[] at (0.8,0.4) {$a_{112}$};
			\node[] at (2.2,0.4) {$a_{122}$};
			\node[] at (0.8,-1.3) {$a_{212}$};
			\node[] at (2.2,-1.3) {$a_{222}$};
			\draw[dashed] (0.48,0.57)  -- (0.48,-1.43);
			\draw[dashed] (0.48,-1.43)  -- (2.47,-1.43);
			\draw[dashed]  (0.48,-1.43) -- (0,-2);
			\node[] at (1,-3) {$\mathcal{A}\in \mathbb{C}^{2\times2\times2}$};
			\end{tikzpicture}};
		\node[scale=2.2] at (6,0.7){
			\begin{tikzpicture}			
			\draw[fill = white] (0,1.5) rectangle ++(0.75,0.75);
			\draw[fill = white] (0,0.75) rectangle ++(0.75,0.75);	
			\node[scale=0.45] at (0.18,2.15) {$a_{111}$};
			\node[scale=0.45] at (0.6,2.15) {$a_{121}$};
			\node[scale=0.45] at (0.18,1.6) {$a_{211}$};
			\node[scale=0.45] at (0.6,1.6) {$a_{221}$};
			
			\node[scale=0.45] at (0.18,1.4) {$a_{112}$};
			\node[scale=0.45] at (0.6,1.4) {$a_{122}$};
			\node[scale=0.45] at (0.18,0.85) {$a_{212}$};
			\node[scale=0.45] at (0.6,0.85) {$a_{222}$};		
			\end{tikzpicture}
		};			
		\node[scale=1.5] at (2.1,0.7) {$\Rightarrow$};
	\end{tikzpicture}
\end{center}
\caption{The schematic of $\mathrm{unfold}( \mathcal{A})$ with a $2\times 2 \times 2$ tensor $\mathcal{A}$ .}
\label{figure4}
\end{figure}
\indent By using the unfold operator, we can create an $n_{1}n_{3} \times n_{2}n_{3}$ block-circulant matrix denoted by $\mathrm{circ}\left( \mathrm{unfold}\left( \mathcal{A}\right)\right)$ with the frontal slices of $\mathcal{A}$. It is given by 
\begin{equation*}
\mathrm{circ}\left( \mathrm{unfold}\left( \mathcal{A}\right)\right) := 
\left[
\begin{matrix}
\mathcal{A}\left( :,:,1\right) &\mathcal{A}\left( :,:,n_{3}\right) & \cdots & \mathcal{A}\left( :,:,2\right) \\
\mathcal{A}\left( :,:,2\right) & \mathcal{A}\left( :,:,1\right) & \cdots & \mathcal{A}\left( :,:,3\right) \\
\vdots &\vdots&  \ddots  & \vdots  \\
\mathcal{A}\left( :,:,n_{3}\right) &  \mathcal{A}\left( :,:,n_{3}{\small -1}\right)&\cdots & \mathcal{A}\left( :,:,1\right) 
\end{matrix}
\right].
\end{equation*}
Also, we denote the inverse operator of $\mathbf{circ}$ as $\mathbf{circ}^{-1}$ which works on block-circulant matrices. For example, we have
$$
\mathrm{circ}^{-1}(\mathrm{circ}\left( \mathrm{unfold}\left( \mathcal{A}\right)\right))=
\left[
\begin{matrix}
	\mathcal{A}\left( :,:,1\right)  \\
	\mathcal{A}\left( :,:,2\right)  \\
	\vdots   \\
	\mathcal{A}\left( :,:,n_{3}\right)  
\end{matrix}
\right].
$$

\indent Kilmer, Martin, and Perrone \cite{t-product} presented a new tensor multiplication called t-product. The t-product applies $\mathbf{fold}$, $\mathbf{unfold}$, and $\mathbf{circ}$ operators and requests that two third-order tensors have matching dimensions. This tensor multiplication can be described in the following.  
\begin{definition}	\label{definition 2.4}
	(T-product \cite{kilmer2011factorization,t-product})   
	Let $\mathcal{A}$ is an $n_{1}\times n_{2}\times n_{3}$ third-order tensor and $\mathcal{B}$ is an $n_{2}\times l \times n_{3}$ third-order tensor, then the product of  $\mathcal{A} \ast \mathcal{B} $ is an ${n_{1}\times l\times n_{3}}$
	tensor can be represented as
	\begin{equation*}
	\mathcal{A} \ast \mathcal{B}=\mathrm{fold}\left[ \mathrm{circ}\left(\mathrm{unfold\left( \mathcal{A}\right)}\right) \cdot \mathrm{unfold\left( \mathcal{B}\right)} \right],  
	\end{equation*}
\end{definition}
\noindent where $\ast$ denotes the new tensor multiplication of two third-order tensors called t-product, and $\cdot$ denotes the general matrix product.\\        
\noindent  \textbf{Example 2.2.}\normalsize ~ Suppose $\mathcal{A}$ is an $n_{1}\times n_{2}\times 3$ third-order tensor and $\mathcal{B}$ is an $n_{2}\times l \times 3$ third-order tensor, then
\begin{equation*}
\begin{aligned}
\mathcal{A} \ast \mathcal{B} &=\mathrm{fold}\left[ \mathrm{circ}\left(\mathrm{unfold\left( \mathcal{A}\right)}\right) \cdot \mathrm{unfold\left( \mathcal{B}\right)} \right] \\
&=\mathrm{fold}\left( 
\left[
\begin{matrix}
\mathcal{A}(:,:,1) & \mathcal{A}(:,:,3) & \mathcal{A}(:,:,2)\\
\mathcal{A}(:,:,2) & \mathcal{A}(:,:,1) & \mathcal{A}(:,:,3) \\
\mathcal{A}(:,:,3) & \mathcal{A}(:,:,2) & \mathcal{A}(:,:,1)\\   
\end{matrix}
\right] \cdot
\left[
\begin{matrix}
\mathcal{B}(:,:,1) \\
\mathcal{B}(:,:,2) \\
\mathcal{B}(:,:,3)\\   
\end{matrix}
\right]\right)
\end{aligned}
\end{equation*}
\noindent is an $n_{1}\times l\times 3$ tensor.\\          
\indent Using the knowledge of Fourier transform, the block-circulant matrix can be transformed into a block-diagonal matrix. Suppose that $F_{n_{3}}$ is the $ {n_{3}\times n_{3}} $  Fourier matrix. Then 
\begin{equation*}
\left( F_{n_{3}} \otimes I_{n_{1}}\right) \cdot  \mathrm{circ}\left( \mathrm{unfold\left( \mathcal{A}\right)}\right)  \cdot \left( {F_{n_{3}}^{H}} \otimes I_{n_{2}}\right)=
\begin{bmatrix}
\bar{A}_{1} &   & &  \\
& \bar{A}_{2} &  & \\
&   & \ddots &\\
&   &  &\bar{A}_{n_{3}}
\end{bmatrix}.
\end{equation*}\\
 Let
\begin{equation*}
\bar{A}:=
\begin{bmatrix}
\bar{A}_{1} &   & &  \\
& \bar{A}_{2} &  & \\
&   & \ddots &\\
&   &  &\bar{A}_{n_{3}}
\end{bmatrix}.
\end{equation*}
Then $\bar{A}$ is a complex $n_{1}n_{3}\times n_{2}n_{3}$ block-diagonal matrix. Let $\bar{A}_{i}$ be the $i$-th frontal slice of $\hat{\mathcal{A}}$, where $\hat{\mathcal{A}}\in \mathbb{C}^{n_{1}\times n_{2} \times n_{3}}$ is the result of discrete Fourier transform (DFT) on $\mathcal{A}$ along the third dimension. The MATLAB command for computing $\hat{\mathcal{A}}$ is 
\begin{equation}\label{Ahat}
	\hat{\mathcal{A}}=\mathrm{fft}[\mathcal{A}, [\,], 3].
\end{equation}  
   
\indent The t-product between two tensors can be understood as the matrix multiplication in the Fourier domain \cite{tensorrobust}. The t-product result between $\mathcal{A}$ and $\mathcal{B}$ can be obtained by multiplying each pair of the frontal slices of $\hat{\mathcal{A}}$ and $\hat{\mathcal{B}}$ and computing the inverse Fourier transform along the third dimension, where $\hat{\mathcal{B}}$ is the result of DFT on $\mathcal{B}$ along the third dimension, the MATLAB commands is  $\hat{\mathcal{B}}$=fft[$\mathcal{B}$, [\,], 3]. \\
\indent Next, we will introduce some basic knowledge of tensors, which can be found in \cite{kilmer2013third, tensorrobust}. 
\begin{definition} \label{definition 2.5}
	(F-diagonal Tensor) Suppose $\mathcal{A}\in \mathbb{C}^{n_{1}\times n_{2}\times n_{3}}$ is a third-order tensor, then $\mathcal{A}$ is the called f-diagonal tensor if its each frontal slice is a diagonal matrix.
\end{definition}
\begin{definition} \label{definition 2.6}
	(Identity Tensor) The ${n\times n\times k}$ tensor $\mathcal{I}_{nnk}$ is called identity tensor if its first frontal slice is an $n\times n$ identity matrix and other frontal slices are all zeros. If k=1, then the $n\times n\times 1$ tensor $\mathcal{I}_{nn1}$ is a special identity tensor with the frontal face being the $n\times n$ identity matrix, and we use $\mathcal{I}_{n}$ to denote it. 
\end{definition} 
\begin{definition} \label{definition 2.12}
	(Tensor Transpose) If $\mathcal{A} \in \mathbb{C}^{n_{1}\times n_{2}\times n_{3}}$ is a third-order tensor, then we use $\mathcal{A}^{T}$ to denote the transpose of tensor $\mathcal{A}$, which is an $n_{2}\times n_{1}\times n_{3}$ tensor obtained
	by transposing each of the frontal slice and then reversing the order of transposed faces  $2$ through $n_{3}$. 
\end{definition}
\begin{definition} \label{definition 2.8}
	(Tensor Conjugate Transpose) If $\mathcal{A}$ is a complex $n_{1}\times n_{2}\times n_{3}$ tensor, then $\mathcal{A}^{H}$ is used to denote the conjugate transpose of tensor $\mathcal{A}$, which is an $n_{2}\times n_{1}\times n_{3}$ tensor obtained
	by conjugate transposing each of the frontal slice and then reversing the order of transposed faces  $2$ through $n_{3}$. 
\end{definition}
\noindent \textbf{Example 2.3.}\normalsize ~ Suppose $\mathcal{A}$ is a complex $n_{1}\times n_{2}\times 4$ tensor, and $\mathcal{A}_{1}$, $\mathcal{A}_{2}$, $\mathcal{A}_{3}$, $\mathcal{A}_{4}$ denote its four frontal slices. Then 
\begin{equation*}
\mathcal{A}^{H}=\mathrm{fold}\left( 
\left[  \begin{matrix}
\mathcal{A}_{1}^{H} \\
\mathcal{A}_{4}^{H} \\
\mathcal{A}_{3}^{H}\\ 
\mathcal{A}_{2}^{H}  
\end{matrix}\right] \right). 
\end{equation*}
\begin{definition} \label{definition 2.7}
	(Unitary Tensor) An ${n\times n\times k}$ complex tensor $\mathcal{U}$ is unitary if $$\mathcal{U} \ast \mathcal{U}^{H}=\mathcal{U}^{H} \ast \mathcal{U}=\mathcal{I}_{nnk}.$$
\end{definition}
\indent Based on these basic definitions, the T-SVD is presented as a third-order
generalization of SVD. This generalization can write a third-order tensor as products of three other third-order tensors.
\begin{theorem}(T-SVD \cite{tensorrobust})\label{theorem 2.1}
	Suppose $\mathcal{A}\in \mathbb{C}^{n_{1}\times n_{2}\times n_{3}}$ is a third-order tensor, then $\mathcal{A}$ can be factored as\\
	\begin{equation*}
	\mathcal{A}=\mathcal{U} \ast \mathcal{S} \ast \mathcal{V}^{H},
	\end{equation*}
	where $\mathcal{U}$, $\mathcal{V}$ are $n_{1}\times n_{1}\times n_{3}$ and  $n_{2}\times n_{2}\times n_{3}$ unitary tensors, respectively, and $\mathcal{S}$ is an $n_{1}\times n_{2}\times n_{3}$ f-diagonal tensor.
\end{theorem}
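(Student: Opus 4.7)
The plan is to reduce the problem to the classical matrix SVD by working in the Fourier domain. The key fact, stated just before the theorem, is that the t-product corresponds to block-diagonal matrix multiplication after a DFT along the third dimension: if $\hat{\mathcal{A}}$ is obtained from $\mathcal{A}$ via (\ref{Ahat}) and has frontal slices $\bar{A}_1,\ldots,\bar{A}_{n_3}$, then $\widehat{\mathcal{A}\ast\mathcal{B}}(:,:,k) = \bar{A}_k\bar{B}_k$ for every $k$. The tensor conjugate transpose of Definition \ref{definition 2.8} is moreover set up so that $\widehat{\mathcal{A}^H}(:,:,k) = \bar{A}_k^H$.

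First I would compute $\hat{\mathcal{A}}$ and apply the ordinary matrix SVD to each frontal slice: $\bar{A}_k = \bar{U}_k \bar{S}_k \bar{V}_k^H$, where $\bar{U}_k\in\mathbb{C}^{n_1\times n_1}$ and $\bar{V}_k\in\mathbb{C}^{n_2\times n_2}$ are unitary and $\bar{S}_k\in\mathbb{R}^{n_1\times n_2}$ is diagonal with nonnegative entries, for $k=1,\ldots,n_3$. I would then assemble these matrices into third-order tensors $\hat{\mathcal{U}},\hat{\mathcal{S}},\hat{\mathcal{V}}$ whose $k$-th frontal slices are $\bar{U}_k,\bar{S}_k,\bar{V}_k$ respectively, and apply the inverse DFT along the third mode to obtain $\mathcal{U},\mathcal{S},\mathcal{V}$ in the spatial domain.

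It then remains to verify the three claimed properties. For the factorization, the Fourier-domain multiplication rule gives that the $k$-th frontal slice of $\widehat{\mathcal{U}\ast\mathcal{S}\ast\mathcal{V}^H}$ is $\bar{U}_k\bar{S}_k\bar{V}_k^H=\bar{A}_k$, and since the DFT is a bijection this forces $\mathcal{U}\ast\mathcal{S}\ast\mathcal{V}^H=\mathcal{A}$. For unitarity of $\mathcal{U}$, observe that $\widehat{\mathcal{U}\ast\mathcal{U}^H}(:,:,k)=\bar{U}_k\bar{U}_k^H=I_{n_1}$ for every $k$; the unique tensor whose Fourier slices are all $I_{n_1}$ is $\mathcal{I}_{n_1 n_1 n_3}$ (identity in its first frontal slice, zero elsewhere), and the same argument applied with $\mathcal{U}^H\ast\mathcal{U}$ and with $\mathcal{V}$ completes this step. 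Finally, $\mathcal{S}$ is f-diagonal because the inverse DFT acts entrywise along the third mode, so $\bar{S}_k(i,j)=0$ for every $k$ whenever $i\neq j$ immediately yields $\mathcal{S}(i,j,k)=0$ for all such $(i,j,k)$.

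The only genuinely delicate point is the identity $\widehat{\mathcal{A}^H}(:,:,k)=\bar{A}_k^H$, which relies on the peculiar reversal of slices $2$ through $n_3$ in Definition \ref{definition 2.8}. This boils down to checking that the reversal of the slice index $k\mapsto n_3-k+2$ matches the complex conjugation of the $n_3$-th roots of unity appearing in the DFT kernel, so that slicewise conjugate transposition in the Fourier domain corresponds exactly to the tensor operation $\mathcal{A}\mapsto \mathcal{A}^H$. Once this short computation is in hand, the rest of the argument is a pointwise invocation of the matrix SVD.
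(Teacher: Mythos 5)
Your proof is correct and follows exactly the route the paper itself intends: Theorem \ref{theorem 2.1} is quoted from the literature without a written proof, but Note 1 (apply the matrix SVD to each frontal slice of $\hat{\mathcal{A}}$) and the fully worked-out parallel argument in the proof of Theorem \ref{theorem 4.2} use the same Fourier-domain block-diagonalization, slicewise factorization, and inverse DFT that you describe. Your identification of the slice-reversal/conjugation compatibility as the one delicate step is also the right place to put the care.
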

\noindent $\mathbf{Note}$ $\mathbf{1:}$ Suppose $\hat{\mathcal{A}}$ is given by (\ref{Ahat}), then T-SVD of $ \mathcal{A}\in \mathbb{C}^{n_{1}\times n_{2}\times n_{3}}$ can be obtained by using SVD on each frontal slice of $\hat{\mathcal{A}}$.

\section{A New Semi-tensor Product Definition of Tensors}\label{sec:3} 
\indent In Section 2, we have introduced some properties and related knowledge of tensors. In the next, we will present a new definition of semi-tensor multiplication of third-order tensors and discuss the desirable theoretical property of this new multiplication.\\
\indent We present the definition of Frobenius norm of tensors. 
\begin{definition} \label{definition 3.2}
	(Frobenius Norm of Tensors \cite{kolda2006multilinear})   Suppose $\mathcal{A}=\left(a_{ijk}\right) $ is an $n_{1}\times n_{2}\times n_{3}$ third-order tensor, then the Frobenius norm of $\mathcal{A}$ is\\
	\begin{equation*}
	\Vert \mathcal{A} \Vert_{F}=\sqrt{\sum_{i=1}^{n_{1}} \sum_{j=1}^{n_{2}}\sum_{k=1}^{n_{3}} \vert a_{ijk}\vert ^{2}} .
	\end{equation*}
\end{definition}
\indent Before we define the semi-tensor product of tensors, we need the knowledge of Kronecker product of tensors which is similar to the one of matrices.
\begin{definition} (Kronecker Product of Tensors \cite{batselier2017constructive, liu2022semi}) \label{definition 3.3}
	Suppose $\mathcal{A}=\left(a_{i_{1}i_{2}\cdots i_{p}}\right)$  is an $n_{1}\times n_{2}\times \cdots \times n_{p} $ $p$-th order tensor, and $\mathcal{B}=\left(b_{j_{1}j_{2}\cdots j_{p}}\right)$ is an $ m_{1}\times m_{2}\times \cdots \times m_{p}$ $p$-th order tensor. Then the Kronecker product of $\mathcal{A}$ and $\mathcal{B}$ is defined by\\
	\begin{equation*}
	\mathcal{A} \otimes \mathcal{B}	=\left[ a_{i_{1}i_{2}\cdots i_{p}} \right] \mathcal{B},
	\end{equation*}
	which is an $n_{1}m_{1}\times n_{2}m_{2}\times \cdots \times n_{p}m_{p}$ $p$-th order tensor, and whose each entry of it can be represented as 
	\begin{equation*}
	\mathcal{A} \otimes \mathcal{B}_{\left[i_{1}j_{1}\right]\left[i_{2}j_{2}\right] \cdots \left[i_{p}j_{p}\right]}	= a_{i_{1}i_{2}\cdots i_{p}} b_{j_{1}j_{2}\cdots j_{p}}.
	\end{equation*}
\end{definition}
We briefly list some properties of the tensor Kronecker product.
\begin{lemma} (\cite{batselier2017constructive}) \label{lemma 3.1}
	Suppose $\mathcal{A}$, $\mathcal{B}$, $\mathcal{C}$ are $p$-th order tensors and $\alpha$ is a scalar, then\\		
	$(\rmnum{1})$  $\mathcal{A} \otimes\left( \mathcal{B}\pm \mathcal{C}\right) =(\mathcal{A}\otimes\mathcal{B})\pm(\mathcal{A}\otimes \mathcal{C})$, $\left( \mathcal{B}\pm \mathcal{C} \right) \otimes \mathcal{A} = \mathcal{B} \otimes \mathcal{A} \pm \mathcal{C}\otimes A;$\\
	$(\rmnum{2})$  $\left(  \mathcal{A}  \otimes \mathcal{B} \right)\otimes \mathcal{C} = \mathcal{A} \otimes (\mathcal{B} \otimes \mathcal{C});$ \\
	$(\rmnum{3})$  $\left(\alpha \mathcal{A} \right) \otimes \mathcal{B}  = \mathcal{A} \otimes \left(\alpha\mathcal{B}\right)=\alpha \left( \mathcal{A} \otimes \mathcal{B}\right).$ 
\end{lemma}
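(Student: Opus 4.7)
The plan is to prove all three properties by a direct entrywise verification using Definition \ref{definition 3.3}, which specifies that the $([i_{1}j_{1}][i_{2}j_{2}]\cdots[i_{p}j_{p}])$-entry of $\mathcal{A}\otimes\mathcal{B}$ equals the scalar product $a_{i_{1}i_{2}\cdots i_{p}}\,b_{j_{1}j_{2}\cdots j_{p}}$. Because the defining rule reduces tensor Kronecker products to scalar multiplication of entries, the lemma ultimately follows from the distributive, associative, and scalar-compatibility laws in the underlying field $\mathbb{C}$.

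For property (i), I would fix an arbitrary multi-index $([i_{1}j_{1}],\ldots,[i_{p}j_{p}])$ (which makes sense since $\mathcal{B}$ and $\mathcal{C}$ have identical dimensions, so their sum is well-defined). Applying Definition \ref{definition 3.3} gives
\begin{equation*}
\bigl(\mathcal{A}\otimes(\mathcal{B}\pm\mathcal{C})\bigr)_{[i_{1}j_{1}]\cdots[i_{p}j_{p}]}
= a_{i_{1}\cdots i_{p}}\bigl(b_{j_{1}\cdots j_{p}}\pm c_{j_{1}\cdots j_{p}}\bigr)
= a_{i_{1}\cdots i_{p}}b_{j_{1}\cdots j_{p}} \pm a_{i_{1}\cdots i_{p}}c_{j_{1}\cdots j_{p}},
\end{equation*}
which is exactly the corresponding entry of $(\mathcal{A}\otimes\mathcal{B})\pm(\mathcal{A}\otimes\mathcal{C})$. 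The mirror identity follows by the same argument with the roles of the two factors swapped.

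For property (ii), the only real bookkeeping is tracking bracketed multi-indices. For a generic entry indexed by $([[i_{1}j_{1}]k_{1}],\ldots,[[i_{p}j_{p}]k_{p}])$, two applications of Definition \ref{definition 3.3} yield
\begin{equation*}
\bigl((\mathcal{A}\otimes\mathcal{B})\otimes\mathcal{C}\bigr)_{[[i_{1}j_{1}]k_{1}]\cdots[[i_{p}j_{p}]k_{p}]}
= \bigl(\mathcal{A}\otimes\mathcal{B}\bigr)_{[i_{1}j_{1}]\cdots[i_{p}j_{p}]}\,c_{k_{1}\cdots k_{p}}
= a_{i_{1}\cdots i_{p}}\,b_{j_{1}\cdots j_{p}}\,c_{k_{1}\cdots k_{p}},
\end{equation*}
and the analogous computation for $\mathcal{A}\otimes(\mathcal{B}\otimes\mathcal{C})$ produces the same scalar triple product. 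Since the underlying dimensions also agree ($(n_{r}m_{r})l_{r} = n_{r}(m_{r}l_{r})$ in each mode), the two tensors coincide entrywise and as shaped arrays. Here the main (and really the only) obstacle is making the index identification between the brackets $[[i_{r}j_{r}]k_{r}]$ and $[i_{r}[j_{r}k_{r}]]$ explicit so that the equality of entries is unambiguous.

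Property (iii) is immediate: for any multi-index,
\begin{equation*}
\bigl((\alpha\mathcal{A})\otimes\mathcal{B}\bigr)_{[i_{1}j_{1}]\cdots[i_{p}j_{p}]}
= (\alpha a_{i_{1}\cdots i_{p}})\,b_{j_{1}\cdots j_{p}}
= a_{i_{1}\cdots i_{p}}\,(\alpha b_{j_{1}\cdots j_{p}})
= \alpha\,\bigl(\mathcal{A}\otimes\mathcal{B}\bigr)_{[i_{1}j_{1}]\cdots[i_{p}j_{p}]},
\end{equation*}
which simultaneously gives all three equalities in (iii). I expect no substantive difficulty anywhere in the argument; the work is purely notational, and the proof could be presented in a single compact paragraph if desired, though separating it into the three items mirrors the statement and is easier to read.
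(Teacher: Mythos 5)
Your entrywise verification is correct; the paper itself states this lemma by citation to the reference on the tensor Kronecker product and gives no proof, so there is no in-paper argument to compare against, but reducing each identity to the distributive, associative, and scalar laws of $\mathbb{C}$ via Definition \ref{definition 3.3} is exactly the standard route. The only point genuinely requiring care is the one you already flag in (ii) --- making the identification of the composite indices $[[i_{r}j_{r}]k_{r}]$ and $[i_{r}[j_{r}k_{r}]]$ explicit (both correspond to the linear index $(i_{r}-1)m_{r}l_{r}+(j_{r}-1)l_{r}+k_{r}$ in mode $r$) --- and once that is written down the argument is complete.
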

\indent After giving the relevant knowledge above, we give a new definition of semi-tensor product of tensors, which requires that tensors have matching dimensions. We mainly give the definition of third-order tensors in this paper.
\begin{definition}    \label{definition 3.4}
	(Semi-tensor Product of Tensor) Let $\mathcal{I}_{k}$ be the $k\times k \times 1$ identity tensor defined in Definition \ref{definition 2.6},  $\mathcal{A}=\left(a_{i_{1}i_{2}i_{3}}\right) $ be an $ m\times n\times t $ third-order tensor, and $\mathcal{B}=\left(b_{j_{1}j_{2}j_{3}}\right) $ be a $ p\times q\times t $ third-order tensor. We have \\
	$(\rmnum{1})$ If $n=kp,k \in \mathbb{Z}^{+}$, then\\
	\begin{equation*}
	\begin{aligned}
	\mathcal{A} \ltimes \mathcal{B} &=\mathcal{A} \ast \left( \mathcal{B} \otimes \mathcal{I}_{k} \right)\\
	&=\mathrm{fold}\left[ \mathrm{circ}\left( \mathrm{unfold(\mathcal{A})}\right) \cdot \mathrm{unfold}\left( \mathcal{B} \otimes \mathcal{I}_{k}\right)\right]
	\end{aligned}
	\end{equation*} 
	is an $m\times kq\times t$ third-order tensor;\\
	$(\rmnum{2})$ If $n=\frac{1}{k}p,k \in \mathbb{Z}^{+}$, then\\
	\begin{equation*}
	\begin{aligned}
	\mathcal{A} \ltimes \mathcal{B} &=\left( \mathcal{A} \otimes \mathcal{I}_{k} \right) \ast \mathcal{B}  \\
	&=\mathrm{fold}\left[ \mathrm{circ}\left( \mathrm{unfold \left( \mathcal{A} \otimes \mathcal{I}_{k}\right)}\right) \cdot \mathrm{unfold}\left( \mathcal{B}\right) \right]
	\end{aligned}
	\end{equation*} 
	is a $km\times q\times t$ third-order tensor. 
\end{definition}	
\indent According to Definition \ref{definition 2.4}, we can see that if $n=p$, then the semi-tensor product of tensors is actully the t-product. For this definition of tensor semi-tensor product, we can get another equivalent representation as follows.
\begin{lemma} \label{lemma 3.2}
	Suppose  $\mathcal{A}=\left(a_{i_{1}i_{2}i_{3}}\right) $  is an  $ m\times n\times t $ third-order tensor and $\mathcal{B}=\left(b_{j_{1}j_{2}j_{3}}\right) $ is a $ p\times q\times t $ third-order tensor. \\
	$(\rmnum{1})$ If $n=kp,k \in \mathbb{Z}^{+}$, then\\
	\begin{equation*}
	\begin{aligned}
	\mathcal{A} \ltimes \mathcal{B} &=\mathrm{fold}\left[ \mathrm{circ}\left( \mathrm{unfold(\mathcal{A})}\right) \cdot  \mathrm{unfold}\left( \mathcal{B} \otimes \mathcal{I}_{k}\right)\right]  \\
	&=\mathrm{fold}\left[ \mathrm{circ}\left( \mathrm{unfold(\mathcal{A})}\right) \ltimes \mathrm{unfold}\left( \mathcal{B} \right)\right]
	\end{aligned}
	\end{equation*}  
	is an $m\times kq\times t$ third-order tensor; \\
	$(\rmnum{2})$ If $n=\frac{1}{k}p,k \in \mathbb{Z}^{+}$, then\\
	\begin{equation*}
	\begin{aligned}
	\mathcal{A} \ltimes \mathcal{B}&=\mathrm{fold}\left[ \mathrm{circ}\left( \mathrm{unfold \left( \mathcal{A} \otimes \mathcal{I}_{k}\right)}\right) \cdot \mathrm{unfold}\left( \mathcal{B}\right)\right]  \\
	&=\mathrm{fold}\left[ \mathrm{circ}\left( \mathrm{unfold \left( \mathcal{A} \right)}\right) \ltimes \mathrm{unfold}\left( \mathcal{B}\right)\right]
	\end{aligned}
	\end{equation*}  
	is a $km\times q\times t$ third-order tensor.
\end{lemma}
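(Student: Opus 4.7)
The plan is to reduce each claim to Lemma 2.2 (which rewrites the matrix semi-tensor product as an ordinary matrix product with a Kronecker factor of $I_k$) by proving the single bridging identity
\[
 \mathrm{unfold}(\mathcal{C}\otimes \mathcal{I}_k)=\mathrm{unfold}(\mathcal{C})\otimes I_k
\]
for any third-order tensor $\mathcal{C}$, and its circulant analogue
\[
 \mathrm{circ}\!\left(\mathrm{unfold}(\mathcal{C}\otimes \mathcal{I}_k)\right)=\mathrm{circ}\!\left(\mathrm{unfold}(\mathcal{C})\right)\otimes I_k.
\]
Both identities rest on the observation that $\mathcal{I}_k$ has a single nonzero frontal slice equal to $I_k$, so by Definition 3.3 the entry $(\mathcal{C}\otimes\mathcal{I}_k)_{[i_1 j_1][i_2 j_2][i_3 j_3]} = c_{i_1 i_2 i_3}\,\delta_{j_1 j_2}\,\delta_{j_3 1}$. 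Hence the third dimension is unchanged and each frontal slice of $\mathcal{C}\otimes\mathcal{I}_k$ is exactly the matrix Kronecker product $\mathcal{C}(:,:,i_3)\otimes I_k$.

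For part (i), I first apply the bridging identity to $\mathcal{B}$: stacking the frontal slices $\mathcal{B}(:,:,i_3)\otimes I_k$ yields the same matrix as $\mathrm{unfold}(\mathcal{B})\otimes I_k$, because the vertical stack commutes with Kronecker multiplication on the right by $I_k$. Since the dimension hypothesis $n=kp$ is exactly the condition needed for Lemma 2.2(i) to apply to $\mathrm{circ}(\mathrm{unfold}(\mathcal{A}))\in\mathbb{C}^{mt\times nt}$ and $\mathrm{unfold}(\mathcal{B})\in\mathbb{C}^{pt\times q}$, I can then write
\[
 \mathrm{circ}\!\left(\mathrm{unfold}(\mathcal{A})\right)\cdot \mathrm{unfold}(\mathcal{B}\otimes\mathcal{I}_k)
 =\mathrm{circ}\!\left(\mathrm{unfold}(\mathcal{A})\right)\cdot\left(\mathrm{unfold}(\mathcal{B})\otimes I_k\right)
 =\mathrm{circ}\!\left(\mathrm{unfold}(\mathcal{A})\right)\ltimes \mathrm{unfold}(\mathcal{B}),
\]
and applying $\mathrm{fold}$ to both sides finishes the case.

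For part (ii), the bridging identity is applied instead to $\mathcal{A}$, and I then need the circulant version: the block-circulant matrix formed from the frontal slices $\mathcal{A}(:,:,i_3)\otimes I_k$ is obtained by Kronecker-multiplying every block of $\mathrm{circ}(\mathrm{unfold}(\mathcal{A}))$ on the right by $I_k$, which by direct comparison of block structures equals $\mathrm{circ}(\mathrm{unfold}(\mathcal{A}))\otimes I_k$. Under the dimension hypothesis $n=p/k$, Lemma 2.2(ii) gives
\[
 \mathrm{circ}\!\left(\mathrm{unfold}(\mathcal{A}\otimes \mathcal{I}_k)\right)\cdot \mathrm{unfold}(\mathcal{B})
 =\left(\mathrm{circ}\!\left(\mathrm{unfold}(\mathcal{A})\right)\otimes I_k\right)\cdot \mathrm{unfold}(\mathcal{B})
 =\mathrm{circ}\!\left(\mathrm{unfold}(\mathcal{A})\right)\ltimes \mathrm{unfold}(\mathcal{B}),
\]
and folding concludes the proof.

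The routine part is the dimension bookkeeping (checking that $mt\times nt$ against $pt\times q$ really matches the hypothesis of Lemma 2.2 in each case). The main obstacle is cleanly establishing the two bridging identities; this is not hard but requires being careful about the index ordering $[i_1 j_1][i_2 j_2][i_3 j_3]$ from Definition 3.3 and verifying that the stacking/circulant block pattern commutes with $(\,\cdot\,)\otimes I_k$. Once those are in hand, the rest is a direct application of Lemma 2.2 together with Definition 3.4.
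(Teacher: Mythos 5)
Your proposal is correct and follows essentially the same route as the paper: both reduce the claim to Lemma \ref{lemma 2.2} via the identities $\mathrm{unfold}(\mathcal{C}\otimes\mathcal{I}_k)=\mathrm{unfold}(\mathcal{C})\otimes I_k$ and $\mathrm{circ}(\mathrm{unfold}(\mathcal{C})\otimes I_k)=\mathrm{circ}(\mathrm{unfold}(\mathcal{C}))\otimes I_k$. The only difference is that you justify these bridging identities explicitly from Definition \ref{definition 3.3} and the block structure, whereas the paper asserts them without comment; your version is the more complete argument.
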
 
\begin{proof}
	From Lemma \ref{lemma 2.2}, Definitions \ref{definition 3.3} and \ref{definition 3.4}, and definitions of fold and unfold operators, we can prove that \\
	$(\rmnum{1})$ 
	\begin{equation*}
	\begin{aligned}
	\mathcal{A} \ltimes \mathcal{B} &=\mathrm{fold}\left[ \mathrm{circ}\left( \mathrm{unfold(\mathcal{A})}\right) \cdot \mathrm{unfold}\left( \mathcal{B} \otimes \mathcal{I}_{k}\right)\right]  \\
	&=\mathrm{fold}\left[ \mathrm{circ}\left( \mathrm{unfold(\mathcal{A})}\right)\cdot ( \mathrm{unfold}\left( \mathcal{B}\right) \otimes {I}_{k})\right]  \\
	&=\mathrm{fold}\left[ \mathrm{circ}\left( \mathrm{unfold(\mathcal{A})}\right) \ltimes \mathrm{unfold}\left( \mathcal{B} \right) \right],
	\end{aligned}
	\end{equation*}      	
	and\\
	$(\rmnum{2})$ 	
	\begin{equation*}
	\begin{aligned}
	\mathcal{A} \ltimes \mathcal{B}&=\mathrm{fold}\left[ \mathrm{circ}\left( \mathrm{unfold \left( \mathcal{A} \otimes \mathcal{I}_{k}\right)}\right) \cdot \mathrm{unfold}\left( \mathcal{B}\right) \right] \\
	&=\mathrm{fold}\left[ \mathrm{circ}\left( \left( \mathrm{unfold}  \left( \mathcal{A} \right) \right) \otimes {I}_{k}\right) \cdot  \mathrm{unfold}\left( \mathcal{B}\right)\right] \\
	&=\mathrm{fold}\left[ \mathrm{circ}\left( \mathrm{unfold \left( \mathcal{A} \right)}\right) \ltimes \mathrm{unfold}\left( \mathcal{B}\right) \right], 
	\end{aligned}
	\end{equation*} 
	where ${I}_{k}$ is the $k\times k$ identity matrix and $\mathcal{I}_{k}$ is the $k\times k\times 1$ identity tensor.
\end{proof} 
\indent  Next, we give a basic property of semi-tensor product of tensors.
\begin{theorem} \label{theorem 3.1}
	Let $\mathcal{A}$, $\mathcal{B}$, and $\mathcal{C}$ are three third-order tensors, they have proper dimensions such that the semi-tensor product is well-defined, then we have \\
	\begin{equation*}
	\left( \mathcal{A} \ltimes \mathcal{B}\right) 	\ltimes \mathcal{C} =    \mathcal{A} \ltimes \left( \mathcal{B}	\ltimes \mathcal{C}	\right).
	\end{equation*} 
\end{theorem}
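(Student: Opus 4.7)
The plan is to reduce tensor-level associativity to the matrix-level associativity of Lemma~2.3 by passing to the Fourier domain along the third mode, where every operation involved decouples across frontal slices. I would first record the following compatibility: since $\mathcal{I}_k$ is $k\times k\times 1$, its only nonzero frontal slice is $I_k$, so by Definition~3.3 the tensor $\mathcal{B}\otimes\mathcal{I}_k$ has $\ell$-th frontal slice $B_\ell\otimes I_k$. Because DFT along mode~3 acts independently on each mode-3 fiber, the $\ell$-th frontal slice of $\widehat{\mathcal{B}\otimes\mathcal{I}_k}$ is $\hat{B}_\ell\otimes I_k$ for every $\ell$.

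Combining this with Note~1 (which gives the $\ell$-th frontal slice of $\widehat{\mathcal{X}\ast\mathcal{Y}}$ as $\hat{X}_\ell\hat{Y}_\ell$) and Lemma~2.2, I obtain the key identity that the $\ell$-th frontal slice of $\widehat{\mathcal{A}\ltimes\mathcal{B}}$ equals $\hat{A}_\ell\ltimes\hat{B}_\ell$ for every $\ell=1,\ldots,t$, in both cases of Definition~3.4. Thus the tensor semi-tensor product $\ltimes$ is, in the Fourier domain, the slice-wise matrix semi-tensor product. Applying this identity twice, the $\ell$-th frontal slice of $\widehat{(\mathcal{A}\ltimes\mathcal{B})\ltimes\mathcal{C}}$ equals $(\hat{A}_\ell\ltimes\hat{B}_\ell)\ltimes\hat{C}_\ell$, and the $\ell$-th slice of $\widehat{\mathcal{A}\ltimes(\mathcal{B}\ltimes\mathcal{C})}$ equals $\hat{A}_\ell\ltimes(\hat{B}_\ell\ltimes\hat{C}_\ell)$. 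These are equal by Lemma~2.3, and applying the inverse DFT along mode~3 then yields the claimed tensor identity.

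An alternative route, avoiding an explicit trip through Fourier space, would be to iterate the second representation in Lemma~3.2, peel off one $\mathrm{fold}\circ\mathrm{circ}\circ\mathrm{unfold}$ layer, and apply the matrix associativity of $\ltimes$ directly; this requires the auxiliary identity $\mathrm{circ}(\mathrm{unfold}(\mathcal{A}\ltimes\mathcal{B}))=\mathrm{circ}(\mathrm{unfold}(\mathcal{A}))\ltimes\mathrm{circ}(\mathrm{unfold}(\mathcal{B}))$, which itself follows from the block-diagonalization $(F_{n_3}\otimes I)\,\mathrm{circ}(\cdot)\,(F_{n_3}^H\otimes I)$ and the entry-wise action of $\otimes\mathcal{I}_k$, so the two routes ultimately rest on the same underlying fact.

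The main obstacle is the bookkeeping across the four combinations of sub-cases arising from Definition~3.4: the inner product $\mathcal{A}\ltimes\mathcal{B}$ uses either case (i) or (ii), and likewise for the outer product with $\mathcal{C}$. In each of the four scenarios one must verify that the stated hypothesis of ``proper dimensions'' translates into matching divisibility conditions on the pair of integer ratios so that the slice-wise matrix identity $(\hat{A}_\ell\ltimes\hat{B}_\ell)\ltimes\hat{C}_\ell=\hat{A}_\ell\ltimes(\hat{B}_\ell\ltimes\hat{C}_\ell)$ is itself a legitimate instance of Lemma~2.3. A secondary obstacle is to justify cleanly, in the paper's notation, the commutation $\widehat{\mathcal{B}\otimes\mathcal{I}_k}=\hat{\mathcal{B}}\otimes\mathcal{I}_k$ by unwinding Definition~3.3 and the entry-wise action of the DFT along mode~3; once this is in hand, the rest is routine.
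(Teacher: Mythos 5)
Your proposal is correct, and your primary route is a genuine (if closely related) variant of the paper's. The paper proves the theorem by unfolding both sides via Lemma~\ref{lemma 3.2}, reducing the claim to a matrix identity between expressions built from $\mathrm{circ}$ and $\mathrm{unfold}$, asserting that $\mathrm{circ}\left[\mathrm{circ}(\mathrm{unfold}(\mathcal{A}))\ltimes\mathrm{unfold}(\mathcal{B})\right]$ equals $\mathrm{circ}(\mathrm{unfold}(\mathcal{A}))\ltimes\mathrm{circ}(\mathrm{unfold}(\mathcal{B}))$, and then invoking the matrix associativity of Lemma~\ref{lemma 2.3} on these block-circulant matrices; this is exactly the ``alternative route'' you sketch in your third paragraph, down to the auxiliary identity you name. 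Your main route instead diagonalizes first: you show $\widehat{\mathcal{B}\otimes\mathcal{I}_k}=\hat{\mathcal{B}}\otimes\mathcal{I}_k$ slice-wise, deduce that the $\ell$-th Fourier slice of $\mathcal{A}\ltimes\mathcal{B}$ is $\hat{A}_\ell\ltimes\hat{B}_\ell$, and apply Lemma~\ref{lemma 2.3} to each slice before inverting the DFT. The two arguments rest on the same block-circulant product fact, as you observe, but your version has one concrete advantage: the paper states the equivalence of its equation (3.1)'s left side with (3.2) without justification, whereas your Fourier-domain computation actually supplies the proof of that step. (The paper essentially records your key slice-wise identity, $\mathrm{unfold}(\hat{\mathcal{C}})=\bar{A}\ltimes\mathrm{unfold}(\hat{\mathcal{B}})$, only \emph{after} Theorem~\ref{theorem 3.1}, rather than using it in the proof.) Your flagged obstacles are real but minor: the DFT along mode~3 leaves the first two dimensions of every slice unchanged, so the divisibility hypotheses for the four sub-case combinations transfer verbatim from the tensor level to each Fourier slice, and the commutation of $\otimes\,\mathcal{I}_k$ with the mode-3 DFT follows immediately from linearity of the DFT on frontal slices.
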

\begin{proof}
	From Lemma \ref{lemma 3.2}, we have\\
	\begin{equation*}
	\mathcal{A} \ltimes \mathcal{B} =\mathrm{fold}\left[ \mathrm{circ}\left( \mathrm{unfold(\mathcal{A})}\right) \ltimes \mathrm{unfold}\left( \mathcal{B} \right)\right],	   
	\end{equation*}
	\begin{equation*}
	\mathcal{B} \ltimes \mathcal{C} =\mathrm{fold}\left[ \mathrm{circ}\left( \mathrm{unfold(\mathcal{B})}\right) \ltimes \mathrm{unfold}\left( \mathcal{C} \right)\right].
	\end{equation*}
	Then,\\
	\begin{equation*}
	\begin{aligned}
	&(\mathcal{A} \ltimes \mathcal{B})\ltimes \mathcal{C}\\
	=&  \mathrm{fold}[\mathrm{circ}[\mathrm{unfold}[\mathrm{fold}(\mathrm{circ}(\mathrm{unfold}(\mathcal{A}))\ltimes \mathrm{unfold}(\mathcal{B}))]]\ltimes \mathrm{unfold}(\mathcal{C})]\\
	=& \mathrm{fold}[\mathrm{circ}[\mathrm{circ}(\mathrm{unfold}(\mathcal{A}))\ltimes \mathrm{unfold}(\mathcal{B})]\ltimes\mathrm{unfold}(\mathcal{C})]
	\end{aligned}
	\end{equation*}
	and 
	\begin{equation*}
	\begin{aligned}
	&\mathcal{A} \ltimes \left( \mathcal{B}\ltimes \mathcal{C}\right)\\
	=& \mathrm{fold}[\mathrm{circ}(\mathrm{unfold}( \mathcal{A}))\ltimes \mathrm{unfold}[\mathrm{fold}(\mathrm{circ}(\mathrm{unfold}( \mathcal{B} ))\ltimes \mathrm{unfold}(\mathcal{C})]] \\
	=&\mathrm{fold}[\mathrm{circ}(\mathrm{unfold}(\mathcal{A}))\ltimes (\mathrm{\mathrm{circ}}(\mathrm{unfold}(\mathcal{B}))\ltimes \mathrm{unfold}(\mathcal{C})].
	\end{aligned} 
	\end{equation*} 
	Thus, we only need to prove \\
	\begin{equation} \label{equation 3.1}
	\begin{aligned}
	\mathrm{circ}[\mathrm{circ}(\mathrm{unfold}(\mathcal{A}))\ltimes \mathrm{unfold}(\mathcal{B})]\ltimes \mathrm{unfold}(\mathcal{C}) \\= \mathrm{circ}(\mathrm{unfold}(\mathcal{A}))\ltimes (\mathrm{circ}(\mathrm{unfold}(\mathcal{B}))\ltimes\mathrm{unfold}(\mathcal{C}).
	\end{aligned} 
	\end{equation}
	The left side of (\ref{equation 3.1}) is equivalent to\\
	\begin{equation}	\label{equation 3.2} 
	\begin{aligned}
	(\mathrm{circ}(\mathrm{unfold}(\mathcal{A}))\ltimes \mathrm{circ}(\mathrm{unfold}(\mathcal{B}))) \ltimes \mathrm{unfold}(\mathcal{C}).	
	\end{aligned}      
	\end{equation}
	By applying Lemma \ref{lemma 2.3}, we can see that (\ref{equation 3.2}) can be written as\\
	\begin{equation*}
	\begin{aligned}
	(\mathrm{circ}(\mathrm{unfold}(\mathcal{A}))\ltimes \mathrm{circ}(\mathrm{unfold}(\mathcal{B}))) \ltimes \mathrm{unfold}(\mathcal{C}) \\= \mathrm{circ}(\mathrm{unfold}(\mathcal{A}))\ltimes (\mathrm{circ}(\mathrm{unfold}(\mathcal{B}))\ltimes \mathrm{unfold}(\mathcal{C}).
	\end{aligned} 
	\end{equation*}
	 Hence the theorem is proved.
\end{proof}
\indent From Lemma \ref{lemma 3.2}, we know that the semi-tensor product between two tensors can be understood as the matrix multiplication in the Fourier domain \cite{tensorrobust}. \\
\indent Suppose $\mathcal{A}$ is an $ m\times n\times t $ third-order tensor and $\mathcal{B}$ is a $ p\times q\times t $ third-order tensor. If $n=kp, k \in \mathbb{Z}^{+}$, then $\mathcal{C}=\mathcal{A} \ltimes \mathcal{B}$ is equivalent to\\
\begin{equation*}
\mathrm{unfold\left( \mathcal{C}\right)}= \mathrm{circ}\left( \mathrm{unfold\left( \mathcal{A}\right)}\right) \ltimes \mathrm{unfold\left( \mathcal{B}\right)},    
\end{equation*}
and we have
\begin{equation*}
\begin{aligned}
\mathrm{unfold(\hat{ \mathcal{C}})}=& F_{t}  \ltimes \mathrm{unfold\left( \mathcal{C}\right)}\\
=&F_{t}  \ltimes \mathrm{circ}\left( \mathrm{unfold\left( \mathcal{A}\right)}\right) \ltimes \mathrm{unfold\left( \mathcal{B}\right)}\\
=& F_{t} \ltimes \mathrm{circ}\left( \mathrm{unfold\left( \mathcal{A}\right)}\right) \ltimes {F_{t}^{H}} \ltimes F_{t} \ltimes \mathrm{unfold\left( \mathcal{B}\right)}  \\
=&\bar{A} \ltimes  \mathrm{unfold(\hat{ \mathcal{B}})}.
\end{aligned} 
\end{equation*}
Therefore, the semi-tensor product result between $\mathcal{A}$ and $\mathcal{B}$ can be obtained by multiplying each pair of the frontal slices of $\hat{\mathcal{A}}$ and $\hat{\mathcal{B}}$ by semi-tensor product and computing the inverse Fourier transform along the third dimension,where $\hat{\mathcal{B}}$ and $\hat{\mathcal{C}}$ are the results of DFT on $\mathcal{B}$ and $\mathcal{C}$ along the third dimension, the MATLAB commands are $\hat{\mathcal{B}}$=fft[$\mathcal{B}$, [\,], 3] and $\hat{\mathcal{C}}$=fft[$\mathcal{C}$, [\,], 3]. \\
\indent We note that Definitions \ref{definition 2.5}, \ref{definition 2.6}, \ref{definition 2.12}, \ref{definition 2.8}, and \ref{definition 2.7} in Section 2 still hold under this new multiplication.

\section{Tensor Decomposition Based on Semi-tensor Product}\label{sec:4}
\indent In this section, we will give a new approximate tensor decomposition model based on the semi-tensor product. 
\subsection{Singular Value Decomposition of Matrices based on Semi-tensor Product}	
\indent We first introduce a strategy of finding $B $ and $C $ so that $\Vert  A-B \otimes C \Vert _{F}$ is minimized \cite{van1993approximation}. Let \\
\begin{equation*}
A= \left[  
\begin{matrix}
A_{1,1} & \cdots & A_{1,n_{1}}\\
A_{2,1} & \cdots & A_{2,n_{1}} \\
\vdots  & \ddots & \vdots\\ 
A_{m_{1},1} & \cdots & A_{m_{1},n_{1}} 
\end{matrix}\right] \in \mathbb{C}^{m_{1}m_{2}\times n_{1} n_{2}} ,
\end{equation*}
where each $A_{i,j} (i=1, 2, \cdots, m_{1}, j=1, 2, \cdots, n_{1})$ is an $m_{2}\times n_{2}$ matrix. Let $\widetilde{A} \in \mathbb{C}^{m_{1}n_{1}\times m_{2} n_{2}} $ be defined by
\begin{equation} \label{equation 4.1}
\widetilde{A}:= \left[  
\begin{matrix}
\check{A}_{1} &\check{A}_{2} & 	\cdots &
\check{A} _{n_{1}}
\end{matrix}\right]^{T} 
\end{equation}
with
\begin{equation} \label{equation 4.2}
\check{A}_{j}= \left[  
\begin{matrix}
vec(A_{1,j})^{T}\\
vec(A_{2,j})^{T} \\
\vdots \\ 
vec(A_{m_{1},j})^{T}
\end{matrix}\right].
\end{equation} 
\begin{lemma}(\cite{van1993approximation})\label{lemma 4.1}
	Suppose $A\in \mathbb{C}^{m\times n} $ with $m=m_{1}m_{2}$ and $n=n_{1}n_{2}$, then we can find  $B \in \mathbb{C}^{m_{1}\times n_{1}} $ and $C\in \mathbb{C}^{m_{2}\times n_{2}} $ defined by $vec(B) =\sqrt{\widetilde{\sigma}_{1}} U\left( :,1\right) $ and $vec(C) =\sqrt{\widetilde{\sigma}_{1}} V\left( :,1\right) $ minimize
	\begin{equation*}
	\Vert A-B \otimes C \Vert _{F} ,
	\end{equation*} 
	where $\widetilde{\sigma}_{1}$ is the largest singular value of  $\widetilde{A}$ which is defined by (\ref{equation 4.1}), and $U\left( :,1\right) $, $V\left( :,1\right) $ are its corresponding left and right singular vectors, respectively.
\end{lemma}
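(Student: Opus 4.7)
The plan is to reduce the Kronecker-product approximation problem to the standard best rank-one matrix approximation problem, which is solved by the truncated SVD of the reshaped matrix $\widetilde{A}$.

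First, I would establish the key identity
\begin{equation*}
\Vert A - B\otimes C\Vert_F \;=\; \Vert \widetilde{A} - \mathrm{vec}(B)\,\mathrm{vec}(C)^{T}\Vert_F.
\end{equation*}
To see this, partition $A$ into the $m_1\times n_1$ grid of $m_2\times n_2$ blocks $A_{i,j}$ exactly as in the statement. Then $B\otimes C$ is the matrix whose $(i,j)$ block is $b_{ij}\,C$, so $\|A-B\otimes C\|_F^2 = \sum_{i,j}\|A_{i,j}-b_{ij}C\|_F^2 = \sum_{i,j}\|\mathrm{vec}(A_{i,j})-b_{ij}\,\mathrm{vec}(C)\|_2^2$. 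By the construction of $\widetilde{A}$ in (\ref{equation 4.1})--(\ref{equation 4.2}), the $((j-1)m_1+i)$-th row of $\widetilde{A}$ is precisely $\mathrm{vec}(A_{i,j})^T$; consequently, row $((j-1)m_1+i)$ of $\widetilde{A} - \mathrm{vec}(B)\,\mathrm{vec}(C)^T$ equals $\mathrm{vec}(A_{i,j})^T - b_{ij}\,\mathrm{vec}(C)^T$. Summing the squared Frobenius norms over all rows yields the identity.

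Second, with this identity in hand, minimizing $\|A-B\otimes C\|_F$ over $B,C$ is equivalent to finding the best rank-one approximation (in Frobenius norm) of $\widetilde{A}$ in the form $u v^{T}$, where $u=\mathrm{vec}(B)\in\mathbb{C}^{m_1 n_1}$ and $v=\mathrm{vec}(C)\in\mathbb{C}^{m_2 n_2}$. By the Eckart--Young theorem applied to $\widetilde{A}=\sum_i \widetilde{\sigma}_i U(:,i) V(:,i)^{H}$, the optimal rank-one approximant is $\widetilde{\sigma}_1 U(:,1) V(:,1)^{H}$. Matching the two factorizations gives $\mathrm{vec}(B)\,\mathrm{vec}(C)^{T} = \widetilde{\sigma}_1 U(:,1) V(:,1)^{H}$, and the natural symmetric split $\mathrm{vec}(B)=\sqrt{\widetilde{\sigma}_1}\,U(:,1)$, $\mathrm{vec}(C)=\sqrt{\widetilde{\sigma}_1}\,V(:,1)$ (interpreted with appropriate conjugation in the complex case, which only affects an absorbable unit phase) realizes this product and thus the minimum.

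The main obstacle is the bookkeeping of the first step: verifying that the particular row ordering in the definition of $\widetilde{A}$ (stacking $\check{A}_j^{T}$ for $j=1,\dots,n_1$) matches the pairing of block-index $(i,j)$ with entry $b_{ij}$ in $B\otimes C$. Beyond this indexing check, the remainder is a direct application of Eckart--Young, so I would spend most of the writeup carefully displaying the block entries of $B\otimes C$, verifying that $\mathrm{vec}(b_{ij}C)=b_{ij}\,\mathrm{vec}(C)$ appears in row $((j-1)m_1+i)$ of $\mathrm{vec}(B)\,\mathrm{vec}(C)^T$, and then invoking the rank-one SVD truncation to conclude.
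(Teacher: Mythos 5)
The paper does not prove Lemma \ref{lemma 4.1} at all --- it is quoted from the Van Loan--Pitsianis reference \cite{van1993approximation} --- so there is no in-paper argument to compare against; your reconstruction is precisely the standard argument from that reference and it is correct: the rearrangement identity $\Vert A-B\otimes C\Vert_F=\Vert \widetilde{A}-\mathrm{vec}(B)\,\mathrm{vec}(C)^T\Vert_F$ (your row-indexing check, row $(j-1)m_1+i$ of $\widetilde{A}$ versus entry $(j-1)m_1+i$ of $\mathrm{vec}(B)$, is the right bookkeeping and it does match the definition in (\ref{equation 4.1})--(\ref{equation 4.2}), read as stacking the $\check{A}_j$ vertically) followed by Eckart--Young for the best rank-one approximant. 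One small quibble: in the complex case the optimal split is $\mathrm{vec}(C)=\sqrt{\widetilde{\sigma}_1}\,\overline{V(:,1)}$, since $\widetilde{\sigma}_1U(:,1)V(:,1)^H=\bigl(\sqrt{\widetilde{\sigma}_1}U(:,1)\bigr)\bigl(\sqrt{\widetilde{\sigma}_1}\overline{V(:,1)}\bigr)^T$; this entrywise conjugation is not an ``absorbable unit phase'' as you describe it, but it is a defect of the lemma's statement (which transcribes the real-matrix formula over $\mathbb{C}$) rather than of your proof, and the fix is immediate.
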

\indent By using Lemma \ref{lemma 4.1}, we can find a SVD-like approximate decomposition based on semi-tensor product of matrix called STP-SVD.
\begin{theorem} \label{theorem 4.1}
	Suppose  $A\in \mathbb{C}^{m\times n} $ with $m=m_{1}m_{2}$ and $n=n_{1}n_{2}$. Then there exist unitary matrices $U \in \mathbb{C}^{m_{1}\times m_{1}} $ and $V \in \mathbb{C}^{n_{1}\times n_{1}} $ such that 
	\begin{equation} \label{equation 4.4}
	A= U \ltimes  \mathrm{\Sigma} \ltimes  V^{H} +E_{1},
	\end{equation} 
	where $\mathrm{\Sigma}=\mathrm{blkdiag}(S_{1}, S_{2}, \dots, S_{p}) \in \mathbb{C}^{m \times n}$ with each block being $ S_{i} \in \mathbb{C}^{m_{2}\times n_{2}}$, $i=1, 2, \dots ,p$, and $ p=min\left\lbrace m_{1},n_{1}\right\rbrace $, then $\Vert S_{1} \Vert _{F}\geqslant \Vert S_{2} \Vert _{F} \geqslant  \cdots \geqslant \Vert S_{p} \Vert _{F}  $. The matrix $E_{1}$ is the approximation error. 
\end{theorem}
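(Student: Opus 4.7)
My plan is to extend the rearrangement idea of Lemma \ref{lemma 4.1} from a single best Kronecker approximation to the multi-term structured decomposition demanded by the theorem. First I would unfold the target form. Since $U$ is $m_1\times m_1$, $\Sigma$ is $m_1 m_2 \times n_1 n_2$, and $V^H$ is $n_1\times n_1$, two applications of Lemma \ref{lemma 2.2} (case (\rmnum{2}) for the first $\ltimes$, case (\rmnum{1}) for the second) yield
\begin{equation*}
U \ltimes \Sigma \ltimes V^{H} = (U \otimes I_{m_{2}})\,\Sigma\,(V^{H} \otimes I_{n_{2}}).
\end{equation*}
Writing $\Sigma=\sum_{i=1}^{p} E_{ii}\otimes S_{i}$, where $E_{ii}\in\mathbb{C}^{m_{1}\times n_{1}}$ is the matrix unit with a $1$ at position $(i,i)$, and applying Lemma \ref{lemma 2.1}(\rmnum{1}), one obtains
\begin{equation*}
U \ltimes \Sigma \ltimes V^{H} = \sum_{i=1}^{p} (u_{i} v_{i}^{H})\otimes S_{i},
\end{equation*}
where $u_{i}$ and $v_{i}$ are the $i$-th columns of $U$ and $V$. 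So the theorem reduces to finding unitary $U$, $V$ and blocks $S_{i}$ whose Kronecker sum approximates $A$.

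Next I would invoke the rearrangement underlying Lemma \ref{lemma 4.1}: form $\widetilde{A}\in\mathbb{C}^{m_{1}n_{1}\times m_{2}n_{2}}$ via (\ref{equation 4.1})--(\ref{equation 4.2}) and compute its matrix SVD $\widetilde{A}=\widetilde{U}\widetilde{\Sigma}\widetilde{V}^{H}$ with singular values $\widetilde{\sigma}_{1}\geq\widetilde{\sigma}_{2}\geq\cdots\geq\widetilde{\sigma}_{r}$, $r=\min(m_{1}n_{1},\,m_{2}n_{2})$. The full-SVD extension of Lemma \ref{lemma 4.1} (the Van Loan--Pitsianis identity) then yields the exact expansion
\begin{equation*}
A = \sum_{k=1}^{r} \widetilde{\sigma}_{k}\, B_{k}\otimes C_{k},
\end{equation*}
with $\mathrm{vec}(B_{k})=\widetilde{U}(:,k)$ and $\mathrm{vec}(C_{k})=\widetilde{V}(:,k)$, so that $\{B_{k}\}$ and $\{C_{k}\}$ are Frobenius-orthonormal systems of $m_{1}\times n_{1}$ and $m_{2}\times n_{2}$ matrices, respectively.

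To force the first Kronecker factors into the required rank-one form, for each $k=1,\dots,p$ I would take the best rank-one approximation $B_{k}\approx \beta_{k}\,u'_{k}(v'_{k})^{H}$ via the ordinary SVD of $B_{k}$, then orthogonalize the collections $\{u'_{k}\}$ and $\{v'_{k}\}$ by Gram--Schmidt and extend each to a full unitary matrix $U\in\mathbb{C}^{m_{1}\times m_{1}}$ and $V\in\mathbb{C}^{n_{1}\times n_{1}}$. Setting $S_{k}=\widetilde{\sigma}_{k}\beta_{k}\,C_{k}$ (padding with zero blocks if $m_{1}\neq n_{1}$) produces $\Sigma=\mathrm{blkdiag}(S_{1},\dots,S_{p})$; after reordering indices so that $\Vert S_{1}\Vert_{F}\geq\cdots\geq\Vert S_{p}\Vert_{F}$ and permuting the columns of $U$ and $V$ accordingly, the advertised form follows. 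All discrepancies -- the rank-one truncations of the $B_{k}$, the Gram--Schmidt correction, and the tail $k>p$ of the Kronecker expansion -- are absorbed into the single residual $E_{1}:=A-U\ltimes\Sigma\ltimes V^{H}$.

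The principal obstacle is the orthonormality constraint on $U$ and $V$. While $\{B_{k}\}$ is Frobenius-orthonormal, the leading singular vectors of its rank-one truncations need not themselves be orthogonal, so the Gram--Schmidt step perturbs $u_{k},v_{k}$ in a way that must be shown to be well-defined. Since the theorem asserts only existence with an unspecified residual $E_{1}$, the argument needs only to verify that this construction yields valid unitary $U,V$ and a block-diagonal $\Sigma$ of the prescribed shape; no optimality estimate on $\Vert E_{1}\Vert_{F}$ is required, though the construction is designed so that ordering by $\widetilde{\sigma}_{k}$ makes $E_{1}$ small in practice.
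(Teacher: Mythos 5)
Your reduction of $U\ltimes\Sigma\ltimes V^{H}$ to $(U\otimes I_{m_{2}})\,\Sigma\,(V^{H}\otimes I_{n_{2}})=\sum_{i}(u_{i}v_{i}^{H})\otimes S_{i}$ is correct and matches the structural heart of the paper's argument, but from there you take a genuinely different and more elaborate route. The paper keeps only the \emph{single} best Kronecker term from Lemma \ref{lemma 4.1}, writing $A=B\otimes C+E_{1}$, then computes an ordinary SVD $B=U\Sigma_{B}V^{H}$ of the small factor and applies the mixed-product rule to get $\Sigma=\Sigma_{B}\otimes C$, so that every block is $S_{i}=\sigma_{i}C$ — a scalar multiple of one fixed $C$. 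This buys three things for free: $U,V$ are unitary because they come from an SVD (no Gram--Schmidt needed), the ordering $\Vert S_{1}\Vert_{F}\geq\cdots\geq\Vert S_{p}\Vert_{F}$ is immediate from $\sigma_{1}\geq\cdots\geq\sigma_{p}$, and $E_{1}$ is exactly the best rank-one Kronecker residual, which is what gives the closed-form error $\Vert E_{1}\Vert_{F}=\sqrt{\sum_{i\geq 2}\widetilde{\sigma}_{i}^{2}}$ in (\ref{equation 4.7}) and underlies Algorithm \ref{Algorithm 1}. Your construction retains $p$ terms of the Van Loan--Pitsianis expansion and so permits genuinely distinct blocks $S_{k}=\widetilde{\sigma}_{k}\beta_{k}C_{k}$, which is potentially a better approximation; the price is two extra uncontrolled error sources (the rank-one truncation of each $B_{k}$ and the Gram--Schmidt correction) and the loss of any clean expression for $\Vert E_{1}\Vert_{F}$, which the paper relies on in the subsequent error analysis. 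Since the theorem as stated places no bound on $E_{1}$, your argument does establish the literal claim, provided you patch the one loose end you flag: when some $u'_{k}$ (or $v'_{k}$) is linearly dependent on its predecessors, Gram--Schmidt produces a zero vector, so you must replace it by an arbitrary unit vector orthogonal to the span of the previous ones (and likewise complete to a full unitary basis); with the error unconstrained this is harmless, but it should be said explicitly.
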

\begin{proof}
	\indent From Lemma \ref{lemma 4.1}, we know that for $A\in \mathbb{C}^{m\times n} $ with $m=m_{1}m_{2}$ and $n=n_{1}n_{2}$, we can find  $B \in \mathbb{C}^{m_{1}\times n_{1}} $ and $C \in \mathbb{C}^{m_{2}\times n_{2}} $ minimize $	\Vert A-B \otimes C \Vert _{F}$. In other words, $A$ can be represented by  $B$ and $ C$ as
	\begin{equation} \label{equation 4.5}
   A = B \otimes C + E_{1}.
	\end{equation}
	Next, we can obtain $B= U {\Sigma}_{B}  V^{H}$ by computing the SVD of $B$, where $U\in \mathbb{C}^{m_{1}\times m_{1}} $, $V\in \mathbb{C}^{n_{1}\times n_{1}}$, and ${\Sigma}_{B}\in \mathbb{C}^{m_{1}\times n_{1}}$. Then, (\ref{equation 4.5}) can be rewritten as
	\begin{equation}  \label{equation 4.6}
	A = (U  {\Sigma}_{B} V^{H}) \otimes C + E_{1},
	\end{equation}
	where ${\Sigma}_{B}$ is a diagonal matrix, and the diagonal elements  of $\mathrm{\Sigma}_{B}$ are singular values of $B$. We use $\sigma_{1},\sigma_{2}, \dots, \sigma_{p}$,  $p=min\left\lbrace m_{1},n_{1}\right\rbrace $ to denote singular values of $B$, and $\sigma_{1}\geqslant\sigma_{2}\geqslant \dots \geqslant \sigma_{p}$, then $\mathrm{\Sigma}_{B}=diag(\sigma_{1},\sigma_{2}, \dots, \sigma_{p})$.\\
	\indent By applying Lemmas \ref{lemma 2.1} and \ref{lemma 2.2}, we can write (\ref{equation 4.6}) as
	\begin{equation*}  
	\begin{aligned}
	A &=\left( U  \mathrm{\Sigma}_{B}  V^{H}\right)  \otimes \left( I_{m_{2}} C I_{n_{2}}\right) +E_{1} \\
	&=\left( U  \otimes I_{m_{2}}\right) \left( \mathrm{\Sigma}_{B}  \otimes  C\right) \left( V^{H} \otimes  I_{n_{2}}\right) +E_{1}  \\
	&= U \ltimes  \mathrm{\Sigma} \ltimes  V^{H} +E_{1},
	\end{aligned}
	\end{equation*}
	where $\mathrm{\Sigma} =\mathrm{\Sigma}_{B}  \otimes C$  is an ${m_{1}m_{2}\times n_{1}n_{2}} $ block-diagonal matrix. And the diagonal elements of $\mathrm{\Sigma}$ are $S_{1}=\sigma_{1}C, S_{2}=\sigma_{2}C, \dots, S_{p}=\sigma_{p}C$. \\
	\indent Since $\mathrm{\sigma}_{i} (i=1,2, \dots, p)$ is the nonnegative number, then using the knowledge of norm, we have $\Vert \mathrm{\sigma}_{i} C \Vert _{F}=\mathrm{\sigma}_{i}\Vert C\Vert _{F}$, thus
	\begin{equation*}
	\Vert\sigma_{1} C \Vert _{F}\geqslant \Vert \sigma_{2}C\Vert _{F}\geqslant \dots \geqslant \Vert  \sigma_{p}C\Vert _{F},
	\end{equation*}
	which is
	\begin{equation*}
	\Vert S_{1}\Vert _{F}\geqslant \Vert S_{2}\Vert _{F}\geqslant \dots \geqslant \Vert  S_{p}\Vert _{F}.
	\end{equation*}
	\indent$E_{1}$ is the error matrix and $ \Vert E_{1}\Vert _{F}$ is the upper bound for approximation error $E_{1}$, then the proof is complete . 
\end{proof}	
\indent We can give a specific error analysis for the error matrix $E_{1}$ in Theorem \ref{theorem 4.1}. Suppose  $\tilde{\sigma}_{1},\tilde{\sigma}_{2}, \dots, \tilde{\sigma}_{q}$,  $q=min\left\lbrace m_{1}n_{1},m_{2}n_{2}\right\rbrace $ are singular values of $\widetilde{A} \in \mathbb{C}^{ m_{1}n_{1}\times m_{2}n_{2}} $ in (\ref{equation 4.1}), then
\begin{equation} \label{equation 4.7}
\begin{aligned}
\Vert E_{1}\Vert _{F}&=\sqrt{\tilde{\sigma}_{2}^{2}+\tilde{\sigma}_{3}^{2}+ \dots+ \tilde{\sigma}_{q}^{2}}\\
&=\sqrt{\sum_{i=2}^{q}\tilde{\sigma}_{i}^{2}}.
\end{aligned}
\end{equation} 
\indent We give the corresponding MATLAB preudocode in the following.
\begin{algorithm}[H]  
	\renewcommand{\algorithmicrequire}{\textbf{Input:}}
	\renewcommand{\algorithmicensure}{\textbf{Output:}}
	\caption{STP-SVD of Matrices}
	\begin{algorithmic}[1]
		\REQUIRE $A\in \mathbb{C}^{m\times n } $, $m_{2},n_{2}$. $(m=m_{1}m_{2}, n=n_{1}n_{2})$
		\ENSURE  $U, \mathrm{\Sigma}, V$.
		\STATE  By using Lemma 4.1, compute $B \in \mathbb{C}^{m_{1}\times n_{1}} $ and $C \in \mathbb{C}^{m_{2}\times n_{2}}$, such that  $A \approx B \otimes C $;
		\STATE Compute the SVD of $B$, i.e. $[U, \mathrm{\Sigma}_{B}, V]$=svd($B$), where $U\in \mathbb{C}^{m_{1}\times m_{1}} $, $V\in \mathbb{C}^{n_{1}\times n_{1}}$, and\\ $\mathrm{\Sigma}_{B}\in \mathbb{C}^{m_{1}\times n_{1}}$;
		\STATE $\mathrm{\Sigma} =\mathrm{\Sigma}_{B}  \otimes  C \in \mathbb{C}^{m_{1}m_{2}\times n_{1}n_{2}} $.		
	\end{algorithmic} \label{Algorithm 1}
\end{algorithm}  
\indent If we use the truncated SVD for $B$ in Algorithm \ref{Algorithm 1}, then we can obtain a corresponding truncated STP-SVD algorithm of matrices. The MATLAB preudocode is given as follows.  
\begin{algorithm}[H]  
	\renewcommand{\algorithmicrequire}{\textbf{Input:}}
	\renewcommand{\algorithmicensure}{\textbf{Output:}}
	\caption{Truncated STP-SVD of Matrices}
	\begin{algorithmic}[1]
		\REQUIRE $A\in \mathbb{C}^{m\times n } $, $m_{2},n_{2}$, r. $(m=m_{1}m_{2}, n=n_{1}n_{2})$
		\ENSURE  $U, \mathrm{\Sigma}, V$.
		\STATE  Compute $B \in \mathbb{C}^{m_{1}\times n_{1}} $ and $C \in \mathbb{C}^{m_{2}\times n_{2}}$, such that  $A \approx B \otimes C $;
		\STATE Compute the truncated SVD of $B$, i.e. $[U, \mathrm{\Sigma}_{B}, V]$=svds($B$, r), where $U\in \mathbb{C}^{m_{1}\times r} $, $V\in \mathbb{C}^{r \times n_{1}}$,\\ and $\mathrm{\Sigma}_{B}\in \mathbb{C}^{r\times r}$;
		\STATE $\mathrm{\Sigma} =\mathrm{\Sigma}_{B}  \otimes  C \in \mathbb{C}^{rm_{2}\times rn_{2}} $.		
	\end{algorithmic} \label{Algorithm 2}
\end{algorithm}  
\indent We write the truncated STP-SVD of matrices in a form similar to Theorem \ref{theorem 4.1}, i.e. 
\begin{equation} \label{equation 4.8}
A= U\ltimes  \mathrm{\Sigma} \ltimes  V^{H} +E.
\end{equation} 
\indent From the proof of Theorem \ref{theorem 4.1}, we can give a new error upper bound for truncated STP-SVD of the matrix. The error matrix $E$ in (\ref{equation 4.8}) can be separated into two parts, the first part is denoted as $E_{1}$ which produced by using rank-one SVD decomposition for $\widetilde{A}$, and the other part  $E_{2}$ comes from the truncated SVD of $B$.\\
\indent We have known the error upper bound for $ E_{1}$ from (\ref{equation 4.7}). Next, we give an error upper bound for $ E_{2} $ according to the proof of Theorem \ref{theorem 4.1}. Suppose the truncation at $r$ when calculating SVD on $B\in \mathbb{C}^{m_{1}\times n_{1}} $, then the error matrix is 
\begin{equation*}
 E_{2}=U \ltimes \Sigma \ltimes  V^{H}- U \ltimes \Sigma_{r} \ltimes  V^{H},
\end{equation*}  
where $\Sigma_{r}$ is obtained by preserving the first $r$ blocks of $\Sigma$ and changing the rest to $0$, which can also be represented by blkdiag$(S_{1}, \cdots, S_{r}, 0, \cdots, 0)$. We use $\Sigma_{p}$ to denote $\Sigma-\Sigma_{r}$, and $\Sigma_{p}$ therefore can be represented by blkdiag$(0, \cdots, 0, S_{r+1}, \cdots, S_{p})$, $p=\min\left\lbrace m_{1},n_{1}\right\rbrace $. Then we can obtain the upper bound for $E_{2}$: 
\begin{equation} \label{4.9}
\begin{aligned}
 E_{2}&=\Vert U \ltimes \Sigma \ltimes  V^{H}- U \ltimes \Sigma_{r} \ltimes  V^{H} \Vert _{F}	\\
 &=\Vert  U \ltimes \Sigma_{p} \ltimes  V^{H}    \Vert _{F}\\
 &=\Vert \mathrm{blkdiag}(0, \cdots, 0, S_{r+1}, \cdots, S_{p})  \Vert _{F} \\
&= \sqrt{\sum_{j=r+1}^{p}{\Vert \mathrm{S}_{j}\Vert}^{2}_{F}}.
\end{aligned}
\end{equation}
\indent Since the error matrix $ E$  in (\ref{equation 4.8}) can be divided into two parts $ E_{1}$ and $ E_{2}$. Then we can give the following upper bound for approximation $E$ from (\ref{equation 4.7}) and (\ref{4.9}). 
\begin{equation} \label{4.10}
\begin{aligned}
\Vert E \Vert _{F} \leq  \sqrt{\sum_{i=2}^{q}\tilde{\sigma}_{i}^{2}}+\sqrt{\sum_{j=r+1}^{p}{\Vert \mathrm{S}_{j}\Vert}^{2}_{F}},
\end{aligned}
\end{equation}\\
we know that $\tilde{\sigma}_{1},\tilde{\sigma}_{2}, \dots, \tilde{\sigma}_{q}$,  $q=\min\left\lbrace m_{1}n_{1},m_{2}n_{2}\right\rbrace $ are singular values of $	\widetilde{A} \in \mathbb{C}^{ m_{1}n_{1}\times m_{2}n_{2}} $ 
from the previous assumption.
\subsection{A New Tensor Decomposition Strategy based on Semi-tensor Product}
\indent From Theorem \ref{theorem 4.1}, we can learn that any matrix in the complex domain has a decomposition form based on the semi-tensor product. Besides, after giving the definition of tensor semi-tensor product, we can find that a tensor also can be decomposed based on semi-tensor product of tensors.   
\begin{theorem} \label{theorem 4.2}
	Suppose  $\mathcal{A}\in \mathbb{C}^{m\times n \times l} $ with $m=m_{1}m_{2}$ and $n=n_{1}n_{2}$ is a third-order tensor, then it can be factorized as 
	\begin{equation} \label{equation 4.10}
	\mathcal{A} = \mathcal{U}  \ltimes  \mathcal{S} \ltimes  \mathcal{V}^{H}+\mathcal{E}_{1}.
	\end{equation}
	where $\mathcal{U}$,  $\mathcal{V} $ are $m_{1}\times m_{1} \times l$ and $n_{1}\times n_{1} \times l$ unitary tensors, respectively, each frontal slice of $\mathcal{S}$ is a block-diagonal matrix, and $\mathcal{E}_{1}$ is an error tensor.
\end{theorem}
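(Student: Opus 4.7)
The plan is to lift Theorem \ref{theorem 4.1} to the tensor setting by working slicewise in the Fourier domain along the third mode, exactly as T-SVD is derived from matrix SVD in Note 1. The Fourier-domain characterization of the semi-tensor product (Lemma \ref{lemma 3.2} together with the discussion following Theorem \ref{theorem 3.1}) says that if $\mathcal{C} = \mathcal{A} \ltimes \mathcal{B}$, then the $i$-th frontal slice of $\hat{\mathcal{C}} = \mathrm{fft}(\mathcal{C},[\,],3)$ equals $\hat{\mathcal{A}}(:,:,i) \ltimes \hat{\mathcal{B}}(:,:,i)$. Hence to produce the asserted decomposition of $\mathcal{A}$, it suffices to produce a matrix STP-SVD of each frontal slice of $\hat{\mathcal{A}}$.

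Concretely, I would first form $\hat{\mathcal{A}} = \mathrm{fft}(\mathcal{A},[\,],3)$ and apply Theorem \ref{theorem 4.1} to each slice $\hat{A}^{(i)} := \hat{\mathcal{A}}(:,:,i) \in \mathbb{C}^{m \times n}$ (with the same integers $m_2, n_2$) to obtain
$$\hat{A}^{(i)} = \hat{U}^{(i)} \ltimes \hat{\Sigma}^{(i)} \ltimes (\hat{V}^{(i)})^{H} + \hat{E}^{(i)},$$
where $\hat{U}^{(i)} \in \mathbb{C}^{m_1 \times m_1}$ and $\hat{V}^{(i)} \in \mathbb{C}^{n_1 \times n_1}$ are unitary and $\hat{\Sigma}^{(i)} \in \mathbb{C}^{m \times n}$ is block-diagonal with blocks of size $m_2 \times n_2$. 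I would stack these matrices as the $i$-th frontal slices of tensors $\hat{\mathcal{U}}, \hat{\mathcal{S}}, \hat{\mathcal{V}}, \hat{\mathcal{E}}_1$ of sizes $m_1 \times m_1 \times l$, $m \times n \times l$, $n_1 \times n_1 \times l$, $m \times n \times l$, respectively, and then apply the inverse DFT along the third mode to define $\mathcal{U}, \mathcal{S}, \mathcal{V}, \mathcal{E}_1$.

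Three claims remain to be verified. First, the identity $\mathcal{A} = \mathcal{U} \ltimes \mathcal{S} \ltimes \mathcal{V}^{H} + \mathcal{E}_1$ follows from the slicewise identity in the Fourier domain together with the Fourier-domain characterization of $\ltimes$ and the associativity from Theorem \ref{theorem 3.1}; the dimensions are compatible because $\mathcal{U}$ has second dimension $m_1$ while $\mathcal{S}$ has first dimension $m_1 m_2$, triggering case $(\rmnum{2})$ of Definition \ref{definition 3.4} with $k = m_2$, and analogously on the right with $k = n_2$. Second, each frontal slice of $\mathcal{S}$ is block-diagonal with $m_2 \times n_2$ blocks, since the inverse DFT produces each such slice as a linear combination of matrices $\hat{\Sigma}^{(i)}$ that share a common block-diagonal sparsity pattern. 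Third, to verify that $\mathcal{U}$ is unitary in the sense of Definition \ref{definition 2.7}, i.e.\ $\mathcal{U} \ast \mathcal{U}^{H} = \mathcal{U}^{H} \ast \mathcal{U} = \mathcal{I}_{m_1 m_1 l}$, it suffices by the Fourier-domain characterization of the t-product to check $\hat{U}^{(i)} (\hat{U}^{(i)})^{H} = (\hat{U}^{(i)})^{H} \hat{U}^{(i)} = I_{m_1}$ for every $i$, which holds by construction; the same argument handles $\mathcal{V}$.

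The main obstacle will be the bookkeeping around Definition \ref{definition 2.8}, since the tensor conjugate transpose both conjugate-transposes each frontal slice and reverses the order of slices $2$ through $l$; one has to check that this operation commutes correctly with the DFT so that the $i$-th frontal slice of $\widehat{\mathcal{U}^{H}}$ really equals $(\hat{U}^{(i)})^{H}$. This is the same identity underlying the T-SVD construction referenced in Note 1, and once it is in hand the remainder of the argument reduces to a slicewise application of Theorem \ref{theorem 4.1}.
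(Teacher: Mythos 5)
Your proposal is correct and follows essentially the same route as the paper: diagonalize along the third mode with the DFT, apply the matrix STP-SVD (Theorem \ref{theorem 4.1}) to each frontal slice of $\hat{\mathcal{A}}$, reassemble, and invert the transform, with unitarity of $\mathcal{U}$ and $\mathcal{V}$ checked slicewise in the Fourier domain. The paper phrases the same computation in terms of block-circulant and block-diagonal matrices rather than \texttt{fft}/\texttt{ifft} on slices, and your added remarks on the block-diagonal sparsity pattern surviving the inverse DFT and on the conjugate-transpose/DFT bookkeeping are points the paper's proof passes over silently.
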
	  	
\begin{proof} 
	By using the Fourier tranform, we suppose $F_{\mathit{l}}$ is an $ l\times l$  Fourier matrix, then we have
	\begin{equation} \label{equation 4.11}
	\left(F_{\mathit{l}} \otimes I_{m}\right) \mathrm{circ}\left( \mathrm{unfold\left( \mathcal{A}\right)}\right) \left( F_{\mathit{l}}^{H}\otimes I_{n}\right)=
	\begin{bmatrix}
	\bar{A}_{1} &   & &  \\
	& \bar{A}_{2} &  & \\
	&   & \ddots &\\
	&   &  &\bar{A}_{\mathit{l}}
	\end{bmatrix},	   	 
	\end{equation}
	\indent For each $ \bar{A}_{i}(i=1,2,\cdots, l)$, we have $ \bar{A}_{i} = \bar{U}_{i} \ltimes  \bar{\mathrm{\Sigma}}_{i} \ltimes  \bar{V}_{i}^{H} + \bar{E}_{i} $ by Theorem \ref{theorem 4.1}, the right side of (\ref{equation 4.11}) can be written as
	\begin{equation*}
	\begin{aligned}
	& \begin{bmatrix}
	\bar{A}_{1} &   & &  \\
	& \bar{A}_{2} &  & \\
	&   & \ddots &\\
	&   &  &\bar{A}_{\mathit{l}}
	\end{bmatrix}\\
	=& \begin{bmatrix}
	 \bar{U}_{1} \ltimes  \bar{ \mathrm{\Sigma}}_{1} \ltimes   \bar{V}_{1}^{H}+\bar{E}_{1} &   & &  \\
	&  \bar{U}_{2} \ltimes   \bar{\mathrm{\Sigma}}_{2} \ltimes  \bar{V}_{2}^{H}+\bar{E}_{2}&  & \\
	&   & \ddots &\\
	&   &  & \bar{U}_{\mathit{l}} \ltimes   \bar{\mathrm{\Sigma}}_{\mathit{l}} \ltimes   \bar{V}_{\mathit{l}}^{H}+\bar{E}_{l}
	\end{bmatrix}\\
	=&  \begin{bmatrix}
	 \bar{U}_{1} &   & &  \\
	&  \bar{U}_{2} &  & \\
	&   & \ddots &\\
	&   &  &  \bar{U}_{\mathit{l}} 
	\end{bmatrix} \ltimes
	\begin{bmatrix}
	 \bar{\mathrm{\Sigma}}_{1} &   & &  \\
	&  \bar{\mathrm{\Sigma}}_{2} &  & \\
	&   & \ddots &\\
	&   &  &  \bar{\mathrm{\Sigma}}_{\mathit{l}} 
	\end{bmatrix} \ltimes
	\begin{bmatrix}
	 \bar{V}_{1}^{H} &   & &  \\
	&  \bar{V}_{2}^{H} &  & \\
	&   & \ddots &\\
	&   &  &  \bar{V}_{\mathit{l}} ^{H}
	\end{bmatrix} +
		\begin{bmatrix}
	 \bar{E}_{1} &   & &  \\
	&  \bar{E}_{2} &  & \\
	&   & \ddots &\\
	&   &  &  \bar{E}_{\mathit{l}} 
	\end{bmatrix}. 
	\end{aligned}
	\end{equation*}
	\indent Next, we use semi-tensor product multiply the left and right sides of above formula by $ F_{\mathit{l}}^{H}  $ and $ F_{\mathit{l}}$, respectively, then we have
	\begin{equation*}
		\begin{aligned}
	&	F_{\mathit{l}}^{H}\ltimes
		\begin{bmatrix}
		\bar{A}_{1} &   & &  \\
		& \bar{A}_{2} &  & \\
		&   & \ddots &\\
		&   &  &\bar{A}_{\mathit{l}}
		\end{bmatrix} \ltimes  	F_{\mathit{l}}  \\	   	 	
		= &  
		F_{\mathit{l}}^{H} \ltimes
		\begin{bmatrix}
		 \bar{U}_{1} &   & &  \\
		& \bar{U}_{2} &  & \\
		&   & \ddots &\\
		&   &  & \bar{U}_{\mathit{l}} 
		\end{bmatrix} \ltimes
		\begin{bmatrix}
		\bar{\mathrm{\Sigma}}_{1} &   & &  \\
		& 	\bar{\mathrm{\Sigma}}_{2} &  & \\
		&   & \ddots &\\
		&   &  & 	\bar{\mathrm{\Sigma}}_{\mathit{l}} 
		\end{bmatrix} \ltimes
		\begin{bmatrix}
		\bar{V}_{1}^{H} &   & &  \\
		& \bar{V}_{2}^{H} &  & \\
		&   & \ddots &\\
		&   &  &\bar{V}_{\mathit{l}} ^{H} 
		\end{bmatrix} 
		\ltimes 
		F_{\mathit{l}}\\
	&	+
	F_{\mathit{l}}^{H}\ltimes
			\begin{bmatrix}
	 \bar{E}_{1} &   & &  \\
	&  \bar{E}_{2} &  & \\
	&   & \ddots &\\
	&   &  &  \bar{E}_{\mathit{l}} 
	\end{bmatrix}
		\ltimes 
		F_{\mathit{l}}\\
     	=&
     	F_{\mathit{l}}^{H} \ltimes
		\begin{bmatrix}
		\bar{U}_{1} &   & &  \\
		& \bar{U}_{2} &  & \\
		&   & \ddots &\\
		&   &  & \bar{U}_{\mathit{l}} 
		\end{bmatrix} \ltimes F_{\mathit{l}} \ltimes
		F_{\mathit{l}}^{H} \ltimes	\begin{bmatrix}
		\bar{\mathrm{\Sigma}}_{1} &   & &  \\
		& \	\bar{\mathrm{\Sigma}}_{2} &  & \\
		&   & \ddots &\\
		&   &  & 	\bar{\mathrm{\Sigma}}_{\mathit{l}} 
		\end{bmatrix} \ltimes F_{\mathit{l}} \ltimes 
	F_{\mathit{l}}^{H}\\
	&\ltimes
				\begin{bmatrix}
		\bar{V}_{1}^{H} &   & &  \\
		& \bar{V}_{2}^{H} &  & \\
		&   & \ddots &\\
		&   &  & \bar{V}_{\mathit{l}} ^{H} 
		\end{bmatrix} 
	\ltimes 
			F_{\mathit{l}}+ F_{\mathit{l}}^{H}\ltimes
		\begin{bmatrix}
	 \bar{E}_{1} &   & &  \\
	&  \bar{E}_{2} &  & \\
	&   & \ddots &\\
	&   &  &  \bar{E}_{\mathit{l}} 
	\end{bmatrix}
		\ltimes 
		F_{\mathit{l}},
		\end{aligned}
		\end{equation*}
	where 
	\begin{equation*}
	\begin{aligned}
	U:=
	F_{\mathit{l}}^{H} \ltimes
	\begin{bmatrix}
	\bar{U}_{1} &   & &  \\
	& \bar{U}_{2} &  & \\
	&   & \ddots &\\
	&   &  & \bar{U}_{\mathit{l}} 
	\end{bmatrix} \ltimes F_{\mathit{l}} ,
	\end{aligned}
	\end{equation*}
	\begin{equation*}
	\begin{aligned}
	\mathrm{\Sigma}:=
	F_{\mathit{l}}^{H} \ltimes	\begin{bmatrix}
	\bar{\mathrm{\Sigma}}_{1} &   & &  \\
	& 	\bar{\mathrm{\Sigma}}_{2} &  & \\
	&   & \ddots &\\
	&   &  & 	\bar{\mathrm{\Sigma}}_{\mathit{l}} 
	\end{bmatrix} \ltimes F_{\mathit{l}}, 
	\end{aligned}
	\end{equation*}
	\begin{equation*}
	\begin{aligned}
	V^{H}:=
	F_{\mathit{l}}^{H} \ltimes
	\begin{bmatrix}
	\bar{V}_{1}^{H} &   & &  \\
	& \bar{V}_{2}^{H} &  & \\
	&   & \ddots &\\
	&   &  & \bar{V}_{\mathit{l}} ^{H} 
	\end{bmatrix} 
	\ltimes 
	F_{\mathit{l}},
	\end{aligned}
	\end{equation*}
	and 
	\begin{equation*}
	\begin{aligned}
	E_{1}:=
	F_{\mathit{l}}^{H} \ltimes
	\begin{bmatrix}
	\bar{E}_{1}^{H} &   & &  \\
	& \bar{E}_{2}^{H} &  & \\
	&   & \ddots &\\
	&   &  & \bar{E}_{\mathit{l}} ^{H} 
	\end{bmatrix} 
	\ltimes 
	F_{\mathit{l}}
	\end{aligned}
	\end{equation*}
	are block-circulant matrices. Let $\mathcal{U}=\mathrm{fold}(\mathrm{circ}^{-1}(U))$, $\mathcal{S}=\mathrm{fold}(\mathrm{circ}^{-1}(\mathrm{\Sigma}))$, $\mathcal{V}^{H}=\mathrm{fold}(\mathrm{circ}^{-1}(V^{H}))$, and $\mathcal{E}_{1}=\mathrm{fold}(\mathrm{circ}^{-1}(E_{1}))$, then we can obtain an approximate tensor decomposition for $\mathcal{A}$ of the form (\ref{equation 4.10}).\\
	\indent Since each $\bar{U}_{i}$ is unitary, we know that $U$ is also a unitary matrix and $\mathcal{U}=\mathrm{fold}(\mathrm{circ}^{-1}(U))$. From Definition \ref{definition 2.8}, we have $\mathcal{U}^{H}=\mathrm{fold}(\mathrm{circ}^{-1}(U^{H}))$, then
	\begin{equation*}
	    \begin{aligned}
	    \mathcal{U}^{H} \ltimes \mathcal{U}= \mathrm{fold}(U^{H} \ltimes \mathrm{circ}^{-1}(U))
	    =\mathrm{fold}\left(\left[
	    \begin{matrix}
	    I_{m_{1}}    \\
        0 \\
\vdots	\\
	0 
	    \end{matrix}\right] \right) = \mathcal{I}_{m_{1}m_{1}\mathit{l}},
	    \end{aligned}
	\end{equation*}
	which indicates $\mathcal{U}$ is a unitary tensor. Similarly, $\mathcal{V}$ is also unitary.\\
\indent	The tensor $\mathcal{E}_{1}$ is the approximation error of the decomposition in (\ref{equation 4.10}). According to the unitary invariance of Frobenius norm, we have 
	\begin{equation*}
	\Vert \mathcal{E}_{1}  \Vert_{F}  = \sqrt{ {\Vert \bar{E}_{1} \Vert}^{2}_{F}+  {\Vert\bar{E}_{2} \Vert}^{2} _{F}+ \dots + {\Vert \bar{E}_{l} \Vert}^{2}_{F}},
	\end{equation*}
	for the approximation error tensor $\mathcal{E}_{1}$, then the proof is complete.
\end{proof} 
\noindent $\mathbf{Note}$ $\mathbf{2:}$ Suppose $\hat{\mathcal{A}}$ is considered as the tensor obtained by using Fourier transform of $ \mathcal{A}$, then STP-SVD of tensor $ \mathcal{A}\in \mathbb{C}^{n_{1}\times n_{2}\times n_{3}}$ can be obtained by computing matrix STP-SVD on each frontal slice of $\hat{\mathcal{A}}$.  \\
\indent We give the MATLAB psuedocode in Algorithm \ref{algorithm 3} for this decomposition strategy. \\
\begin{algorithm}[H]
	\renewcommand{\algorithmicrequire}{\textbf{Input:}}
	\renewcommand{\algorithmicensure}{\textbf{Output:}}
	\caption{STP-SVD of Tensors}
	\begin{algorithmic}[1]  \label{algorithm 3}
		\REQUIRE $\mathcal{A}\in \mathbb{C}^{m_{1}m_{2}\times n_{1}n_{2}\times l} $, $m_{2},n_{2}$.
		\ENSURE  $\mathcal{U}, \mathcal{S}, \mathcal{V}$.
		\STATE  Perform Fourier transform on $\mathcal{A}$, i.e.$\hat{\mathcal{A}}=\mathrm{fft}(\mathcal{A}, [\,], 3)$;
		\STATE From Lemma 4.1, compute $B_{i} $ and $C_{i} $ and do SVD on $B_{i} $:\\
		$\mathbf{for}$ $i=1:l$ $\mathbf{do}$\\ 
		\quad $\hat{\mathcal{A}}(:,:,i) \approx B_{i} \otimes C_{i} $;\\
		\quad $[ U_{i}, \mathrm{\Sigma}_{{B}_{i}}, V_{i}]=\mathrm{svd}(B_{i} )$;\\
		\quad  $\mathrm{\Sigma}_{i} =\mathrm{\Sigma}_{{B}_{i}}  \otimes C_{i} \in \mathbb{C}^{m_{1}m_{2}\times n_{1}n_{2}} $;\\
		\quad $\mathcal{U}_{i} \leftarrow U_{i}$, $\mathcal{S}_{i} \leftarrow \mathrm{\Sigma}_{i}$, $\mathcal{V}_{i} \leftarrow V_{i}$;\\
		$\mathbf{end}$ $\mathbf{for}$
		\STATE $\mathcal{U}=\mathrm{ifft}(\mathcal{U}, [\,], 3)$, $\mathcal{S}=\mathrm{ifft}(\mathcal{S}, [\,], 3)$, $\mathcal{V}=\mathrm{ifft}(\mathcal{V}, [\,], 3)$.\\
		($\hat{\mathcal{A}}(:,:,i)$, $\mathcal{U}_{i}$, $\mathcal{S}_{i}$, and $\mathcal{V}_{i}$ denote the $i$-th frontal slice of $\hat{\mathcal{A}}$, $\mathcal{U}$, $\mathcal{S}$, and $\mathcal{V}$, respectively.)   	
	\end{algorithmic}
\end{algorithm}	 
\indent We have given the truncated STP-SVD of matrices, and now we give the truncated STP-SVD of tensors. The idea is using truncated SVD on $B_{i}$ every time we decompose $\hat{\mathcal{A}}(:,:,i)$. Given a vector $\mathbf{R}=[R_1,R_2,\dots,R_l]^T\in \mathbb{N_{+}}^{l}$, when we do STP-SVD on the $i$-th frontal slice of $\hat{\mathcal{A}}$, the SVD in Algorithm \ref{algorithm 3} is truncated at $\mathit{R}_i$. For the decomposition $\mathcal{A} = \mathcal{U}  \ltimes  \mathcal{S} \ltimes  \mathcal{V}^{H}+\mathcal{E}_{1}$, we can get the number of diagonal blocks in the $i$-th frontal slice of $ \mathcal{S}$ is $\mathit{R}_i$. From this, we give the vector ${\bf R} \in \mathbb{N_{+}}^{l}$ a definition in the following. 
\begin{definition} \label{definition 4.1}
	Let $\mathcal{A}\in \mathbb{C}^{m\times n \times l} $ be decomposed into $\mathcal{A} = \mathcal{U}  \ltimes  \mathcal{S} \ltimes  \mathcal{V}^{H}+\mathcal{E}_{1}$, and $\mathcal{S}\in \mathbb{C}^{ m\times n \times l}$ with each frontal slice a block-diagonal matrix. Then we call ${\bf R}=[R_{1}, R_{2}, \dots, R_{l}]^{T} \in \mathbb{N_{+}}^{l}$ the block rank, where $R_{i}$ represents the number of diagonal blocks in the $i$-th frontal slice of $\mathcal{S}$.
\end{definition}
\indent The MATLAB psuedocode of truncated STP-SVD is as follows.\\
\begin{algorithm}[H]
	\renewcommand{\algorithmicrequire}{\textbf{Input:}}
	\renewcommand{\algorithmicensure}{\textbf{Output:}}
	\caption{Truncated STP-SVD of Tensors}
	\begin{algorithmic}[1]     \label{algorithm 4}
		\REQUIRE $\mathcal{A}\in \mathbb{C}^{m_{1}m_{2}\times n_{1}n_{2}\times l} $, $m_{2},n_{2}$, ${\bf R}\in \mathbb{N_{+}}^{l}$.
		\ENSURE  $\mathcal{U}, \mathcal{S}, \mathcal{V}$.
		\STATE  Perform Fourier transform on $\mathcal{A}$, i.e.$\hat{\mathcal{A}}=\mathrm{fft}(\mathcal{A}, [\,], 3)$;
		\STATE From Lemma 4.1, compute $B_{i} $ and $C_{i} $ and do truncated SVD on $B_{i} $:\\
		$\mathbf{for}$ $i=1:l$ $\mathbf{do}$\\ 
		\quad $\hat{\mathcal{A}}(:,:,i) \approx B_{i} \otimes C_{i} $;\\
		\quad	$r=R(i)$;\\ 
		\quad $[ U_{i}, \mathrm{\Sigma}_{{B}_{i}}, V_{i}]=\mathrm{svds}(B_{i}, r)$;\\
		\quad  $\mathrm{\Sigma}_{i} =\mathrm{\Sigma}_{{B}_{i}}  \otimes C_{i} \in \mathbb{C}^{rm_{2}\times rn_{2}} $;\\
		\quad $\mathcal{U}_{i} \leftarrow U_{i}$, $\mathcal{S}_{i} \leftarrow \mathrm{\Sigma}_{i}$, $\mathcal{V}_{i} \leftarrow V_{i}$;\\
		$\mathbf{end}$ $\mathbf{for}$
		\STATE $\mathcal{U}=\mathrm{ifft}(\mathcal{U}, [\,], 3)$, $\mathcal{S}=\mathrm{ifft}(\mathcal{S}, [\,], 3)$, $\mathcal{V}=\mathrm{ifft}(\mathcal{V}, [\,], 3)$.\\
		($\hat{\mathcal{A}}(:,:,i)$, $\mathcal{U}_{i}$, $\mathcal{S}_{i}$, and $\mathcal{V}_{i}$ denote the $i$-th frontal slice of $\hat{\mathcal{A}}$, $\mathcal{U}$, $\mathcal{S}$, and $\mathcal{V}$, respectively.) \\	   			   					
	\end{algorithmic}
\end{algorithm}
\indent Now, we give a specific error analysis for the truncated STP-SVD of tensors. First, with the same assumptions and symbols in Theorem \ref{theorem 4.2}, we give tensors truncated STP-SVD a similar representation to (\ref{equation 4.10}), expressed by 
\begin{equation} \label{4.14}
\begin{aligned}
\mathcal{A} = \mathcal{U}  \ltimes  \mathcal{S} \ltimes  \mathcal{V}^{H}+\mathcal{E},
\end{aligned}
\end{equation} 
where $\mathcal{E}$ is the
corresponding error tensor. Next, we have the upper bound for  $\mathcal{E}$, \\
\begin{equation*}
\begin{aligned}
\Vert \mathcal{E}  \Vert_{F}  &= \sqrt{ {\Vert E^{(1)}\Vert}^{2}_{F}+  {\Vert E^{(2)}\Vert}^{2}_{F}+\dots +  {\Vert E^{(l)}\Vert}^{2}_{F}}\\
 &\leqslant \Vert E^{(1)}\Vert_{F} +\Vert E^{(2)}\Vert_{F}+ \cdots +\Vert E^{(l)}\Vert_{F},
 \end{aligned}
\end{equation*}
where $\Vert E^{(k)}\Vert_{F}$ $(k=1,2, \cdots, l)$ denotes the upper bound of error produced by using truncated SVD on $\hat{\mathcal{A}}(:,:,k)$. This is changed to consider the error upper bound of matrix STP-SVD (see (\ref{4.10})). Each  $\Vert E^{(k)}\Vert_{F} $ can be given as 
\begin{equation*}
\Vert E^{(k)}\Vert_{F}  \leqslant  \sqrt{\sum_{i=2}^{q}(\tilde{\sigma}_{i}^{(k)})^{2}}+\sqrt{\sum_{j=R_{k}+1}^{p}{\Vert S_{j}^{(k)}\Vert}^{2}_{F}},
\end{equation*}
from (\ref{4.10}). Suppose $\tilde{\mathcal{A}}(:,:,k)$ is obtained by (\ref{equation 4.1}) with $A=\hat{\mathcal{A}}(:,:,k)$ therein, and $\tilde{\sigma}_{i}^{(k)}$ with $i=1, 2, \cdots, q$, $q=min\left\lbrace m_{1}n_{1},m_{2}n_{2}\right\rbrace$ are singular values of $\tilde{\mathcal{A}}(:,:,k)$. The matrix $S_{j}^{(k)}$ denotes the error produced by using truncated SVD on $\hat{\mathcal{A}}(:,:,k)$, and $p=min\left\lbrace m_{1},n_{1}\right\rbrace$. Then an upper bound for approximation error $\mathcal{E}  $ in (\ref{4.14}) is
\begin{equation*}
\Vert \mathcal{E}  \Vert_{F}  \leqslant  \sum_{k=1}^{l} \left( \sqrt{\sum_{i=2}^{q}(\tilde{\sigma}_{i}^{(k)})^{2}}+\sqrt{\sum_{j=R_{k}+1}^{p}{\Vert S_{j}^{(k)}\Vert}^{2}_{F}}\right).	   
\end{equation*}
\subsection{Data Compression of New Tensor Decomposition }
\indent The algorithms introduced in this paper can be used for data compression. In fact, the dominant cost for our algorithm is the STP-SVD for $\hat{\mathcal{A}}(:,:,i)$ (the $i$-th frontal slice after the Fourier transform of $\mathcal{A}$), therefore, when calculating the required storage, we mainly consider STP-SVD for $\hat{\mathcal{A}}(:,:,i)$. For example, we suppose  $\mathcal{A}\in \mathbb{C}^{m\times n \times l} $ with $m=m_{1}m_{2}$ and $n=n_{1}n_{2}$ is a third-order tensor. Then, we need to store $m\times n \times l$ data for storing $\mathcal{A}$ in the computer. If we use full T-SVD \cite{kilmer2011factorization,t-product}, we need to store $(m+n+1)pl$, $p=min\left\lbrace m,n\right\rbrace $ data. However, if we use truncated SVD for each $\hat{\mathcal{A}}(:,:,i)$ in T-SVD algorithm, and truncated at $r$ every time, then we need to store $(m+n+1)rl$ data. If we choose $r\ll p$, the truncated T-SVD will achieve data compression to a certain extent. Now, if we approximate $\mathcal{A}$ by using STP-SVD, the amount of required storage drops to $\left[ (m_{1}+n_{1}+1)q+m_{2}n_{2}\right]l$, $q=min\left\lbrace m_{1},n_{1}\right\rbrace $. And if we use truncated STP-SVD, and truncated at $r$ in the same way, then the data we need to store is only $\left[ (m_{1}+n_{1}+1)r+m_{2}n_{2}\right]l$. Compared to T-SVD, our algorithm requires less data when storing third-order tensors, data compression is well implemented therefore. We can see the result specifically in Table \ref{table 1}.
\begin{table}[H]  
	\centering 
		\caption{The algorithms and corresponding required storage for decomposing an $m\times n \times l$ tensor $\mathcal{A}$.} 
	\setlength{\tabcolsep}{20pt} 	     	
	\scalebox{0.9}{	
		\renewcommand\arraystretch{0.8} 
		\begin{tabular}{ lc } 
			\toprule \\	
			\textbf{Algorithm}& \textbf{Required storage} \\
			\midrule \\
			full T-SVD& $(m+n+1)pl$, $p=min\left\lbrace m,n\right\rbrace $  \\
			\midrule  \\
			full STP-SVD	& $\left[ (m_{1}+n_{1}+1)q+m_{2}n_{2}\right]l$, $q=min\left\lbrace m_{1},n_{1}\right\rbrace $ \\
			\midrule  \\
			truncated T-SVD  & $(m+n+1)rl$   \\  
			\midrule  \\
			truncated STP-SVD& $\left[ (m_{1}+n_{1}+1)r+m_{2}n_{2}\right]l$  \\ 
			\bottomrule
			Note: $m=m_1m_2$ and $n=n_1n_2.$
		\end{tabular}}
		\label{table 1} 
	\end{table}
	\indent Next we introduce the data compression rate   \\
	\begin{equation} \label{equation 4.12} 
	Cr=\frac{N}{N_{O}},
	\end{equation}
	where $N$ denotes the amount of data needs to storage when we use strategy to compress the $m\times n\times l$ tensor $\mathcal{A}$, and ${N_{O}}$ denotes the original tensor without compression. Obviously, $Cr<1$ represents compression strategy stores less data compared to storing the entire tensor directly, while $Cr>1$, the reverse applies. For any $m\times n\times l$ third-order tensor with $m=m_{1}m_{2}$ and $n=n_{1}n_{2}$, the compression rate  be showed in the following Table \ref{table 2}.	\\
	\begin{table}[h]  
		\centering  
				\caption{Data compression rate for decomposing $\mathcal{A}\in \mathbb{C}^{m\times n \times l} $.}
		\setlength{\tabcolsep}{20pt}	     	
		\scalebox{0.9}{	
			\renewcommand\arraystretch{0.8}   	     	  		
			\begin{tabular}{ lc } 
				\toprule \\	
				\textbf{Algorithm}& \textbf{Cr} \\
				\midrule \\
				full T-SVD& $\dfrac{(m+n+1)p}{mn}$, $p=min\left\lbrace m,n\right\rbrace $   \\
				\midrule  \\
				full STP-SVD	& $\dfrac{(m_{1}+n_{1}+1)q+m_{2}n_{2}}{mn}$, $q=min\left\lbrace m_{1},n_{1}\right\rbrace $ \\
				\midrule  \\
				truncated T-SVD  & $\dfrac{(m+n+1)r}{mn}$ \\  
				\midrule  \\
				truncated STP-SVD&$\dfrac{(m_{1}+n_{1}+1)r+m_{2}n_{2}}{mn}$ \\ 
				\bottomrule
			\end{tabular}}
			\label{table 2}
		\end{table}
		\indent Now, we assume that $\mathcal{A}$ is an $n \times n \times n$ third-order tensor, and taking $m_{1}$=$m_{2}$=$n_{1}$=$n_{2}$=$\sqrt{n}$ in our algorithms. Then, the number of data we need to store is $n^{3}$ for saving $\mathcal{A}$. If we use full  T-SVD \cite{kilmer2011factorization,t-product} for $\mathcal{A}$, we need to store $ 2n^{3}+n^{2 }$ data. For truncated T-SVD, if we truncate at $r$ every time we decompose $\hat{\mathcal{A}}(:,:,i)$, then we need to store $2rn^{2}+rn$ data. If we use Algorithm \ref{algorithm 3} to approximate $\mathcal{A}$, the required storage capacity is $3n^{2}+n^{\frac{3}{2}}$. If we use truncated STP-SVD (Algorithm \ref{algorithm 4}) on $\mathcal{A}$, and truncated at $r$ in the same way, then the data we need to store is $n^{2}+2n^{\frac{3}{2}}r+nr$. We can also give the corresponding data compression rate of  $\mathcal{A} \in \mathbb{C}^{n \times n \times n}$ on the basis of (\ref{equation 4.12}). The result is listed in Table \ref{table 3}.
		\begin{table}[h]  
			\centering  
			\caption{The corresponding required storage and compression rate of algorithms for decomposing an $n \times n \times n$ tensor $\mathcal{A}$.}
			\setlength{\tabcolsep}{20pt}	    	  	    	
			\scalebox{0.9}{	
				\renewcommand\arraystretch{0.8}   	     		 			  		
				\begin{tabular}{ lcc } 
					\toprule \\	
					\textbf{Algorithm}& \textbf{Required storage}&\textbf{Cr}\\
					\midrule \\
					full T-SVD& $2n^{3}+n^{2 } $& $\dfrac{2n+1}{n}$  \\
					\midrule  \\
					full STP-SVD	& $3n^{2}+\sqrt{n^{3}} $&$\dfrac{3\sqrt{n}+1}{\sqrt{n^{3}}}$  \\
					\midrule  \\
					truncated T-SVD  & $2rn^{2}+rn$ &$\dfrac{2rn+r}{n^{2}}$ \\  
					\midrule  \\
					truncated STP-SVD& $n^{2}+2r\sqrt{n^{3}}+nr$ &$\dfrac{n+2r\sqrt{n}+r}{n^{2}}$\\ 
					\bottomrule
				\end{tabular}}
				\label{table 3}
			\end{table}

\section{Applications}\label{sec:5}  
		\indent One of the most important applications of our theoretical knowledge is high-resolution color image compression. Therefore, in this section, we give some numerical experiments related to image compression.\\	
			\indent We use peak signal-to-noise ratio (PSNR) \cite{huynh2008scope} and structural similarity (SSIM) \cite{wang2004image} to measure the quality of image compression here. Often, after image compression, the output image will differ to some extent from the original image. In order to measure the quality of the processed image, it is common to refer to the PSNR value to determine whether a particular process is satisfactory. And the larger the PSNR value, the better the image quality. The value of PSNR is 30-40dB usually indicates that the image quality is good. SSIM is an indicator to measure the similarity of two images, and its value range is [0, 1]. Of the two images used by SSIM, one is the original uncompressed image and the other is the distorted image after processing. The larger the value of SSIM, the smaller the degree of image distortion.\\
	\textbf{	Experiment 1}	 For a $4000\times 6000 \times 3$ color image, we choose the input factors $m_{2}=4$, $n_{2}=6$, then use the algorithm introduced in section 4 to compress it. We give a comparison of compression quality which obtained by using truncated T-SVD with $R=[200, 200, 200]^{T}$ and STP-SVD without truncation, respectively, and show the results in Table \ref{table 4}. \\
	\begin{figure*}[h]
		\begin{subfigure}[t]{0.32\textwidth}
		\centering		
		\includegraphics[width=\textwidth]{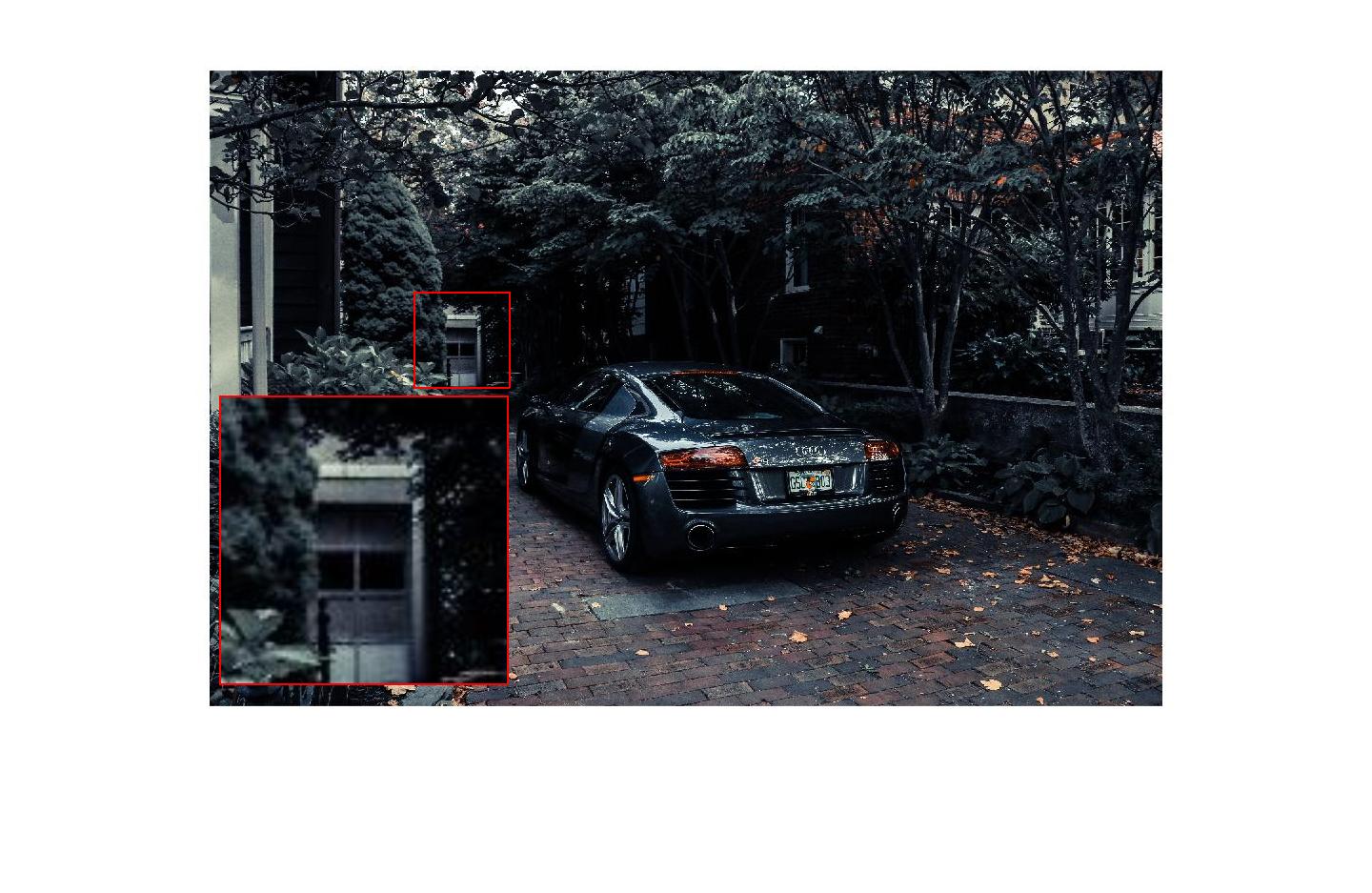}	\caption{Original image}
	 \end{subfigure}
		\begin{subfigure}[t]{0.32\textwidth}
			\centering		
		\includegraphics[width=\textwidth]{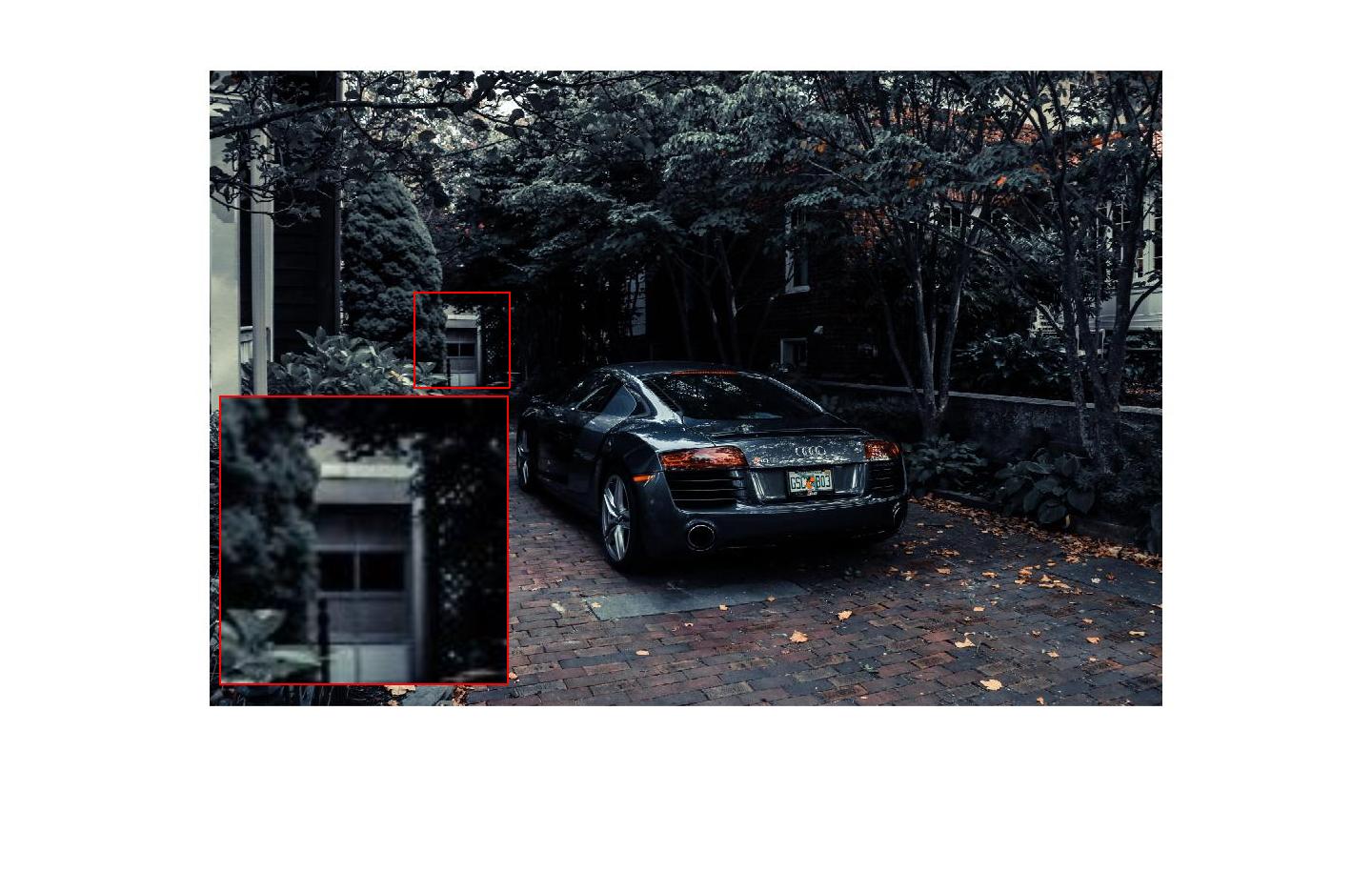}		\caption{STP-SVD without truncation}
	 \end{subfigure}
	 	\begin{subfigure}[t]{0.32\textwidth}
			\centering		
		\includegraphics[width=\textwidth]{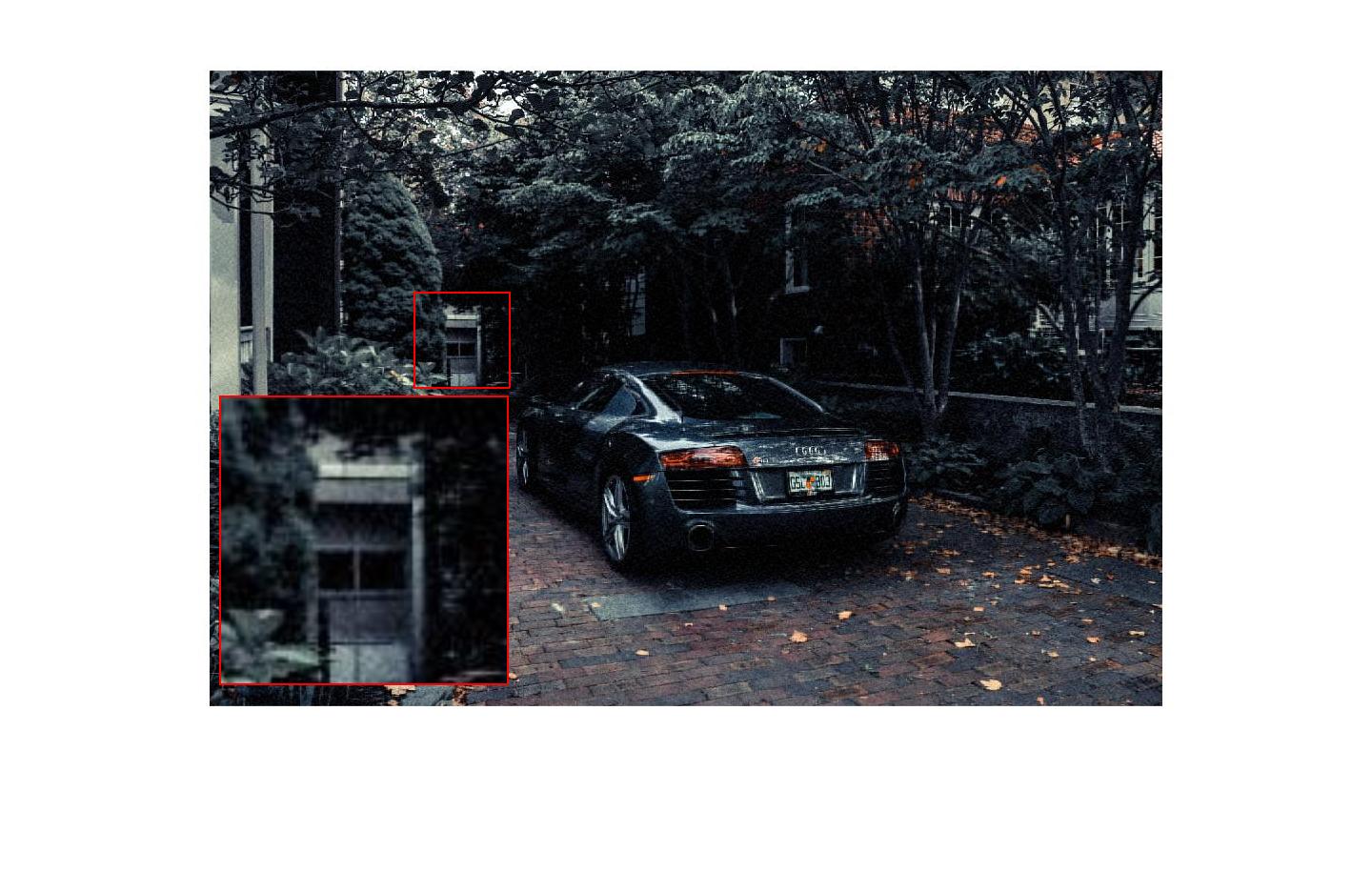}		\caption{Truncated T-SVD}
	 \end{subfigure}
		\caption{The original image and compressed images obtained by using STP-SVD and truncated T-SVD.}
		\end{figure*}
				\begin{table}[h] 
					\centering 	
					\caption{Comparison of compression quality which obtained by using truncated T-SVD with $R=[200, 200, 200]^{T}$ and STP-SVD without truncation for Image 01.}\setlength{\tabcolsep}{18.7pt}	     	
						\scalebox{0.85}{	
							\renewcommand\arraystretch{0.8}  			
						\begin{tabular}{lcc}
							\toprule \\							
							 	& \textbf{STP-SVD} & \textbf{Truncated T-SVD} \\
							 	& \textbf{without truncation}     & \textbf{with}  $R={[200, 200, 200]^{\mathrm{T}}}$   \\
							\midrule  \\ 
							\textbf{TIME} & 15.888047s & 119.671456s \\
							\midrule  \\ 
							\textbf{Related Error} & 0.2117 & 0.2169\\
							\midrule  \\ 
							\textbf{PSNR} & 25.3376 & 25.2777    \\
							\midrule  \\ 
							 \textbf{SSIM}& 0.7981 & 0.7026\\							 		
							\bottomrule
						\end{tabular}}
						\label{table 4} 
					\end{table}  
			\indent It's easy to see that STP-SVD without truncation has almost the same or even better experimental results compared with truncated T-SVD with $R=[200, 200, 200]^{T}$. However, STP-SVD without truncation saves a lot of time. \\ 	\\	
	\noindent	\textbf{	Experiment 2} In Table \ref{table 5}, we compare the compression quality which obtained by using truncated T-SVD and truncated STP-SVD for several images with different resolutions. Here we emphasize $R$ represents different meanings in truncated T-SVD and truncated STP-SVD. For T-SVD, $R_{i}$ represents the rank taken when SVD is performed on the $i$-th frontal slice of the target tensor, while the meaning of $R$ in STP-SVD is the same as that explained in Definition \ref{definition 4.1}. The image scales used for numerical experiments are 4887 $\times$ 7500 $\times$ 3, 3000 $\times$ 3000 $\times$ 3, 6000 $\times$ 8000 $\times$ 3, 4000 $\times$ 6000 $\times$ 3, and 5304 $\times$ 7952 $\times$ 3, the images results and data results of the numerical experiments are shown below. The first column((a), (d), (g), (j), (m)) of Figure \ref{figure6} shows the original images, the second column((b), (e), (h), (k), (n)) shows the truncated STP-SVD processed images, and the third column((c), (f), (i), (l), (o)) shows the truncated T-SVD processed images.\\ 
	
		\begin{figure}[htbp]
  \centering
  \subfloat[]{
    \includegraphics[width=5cm]{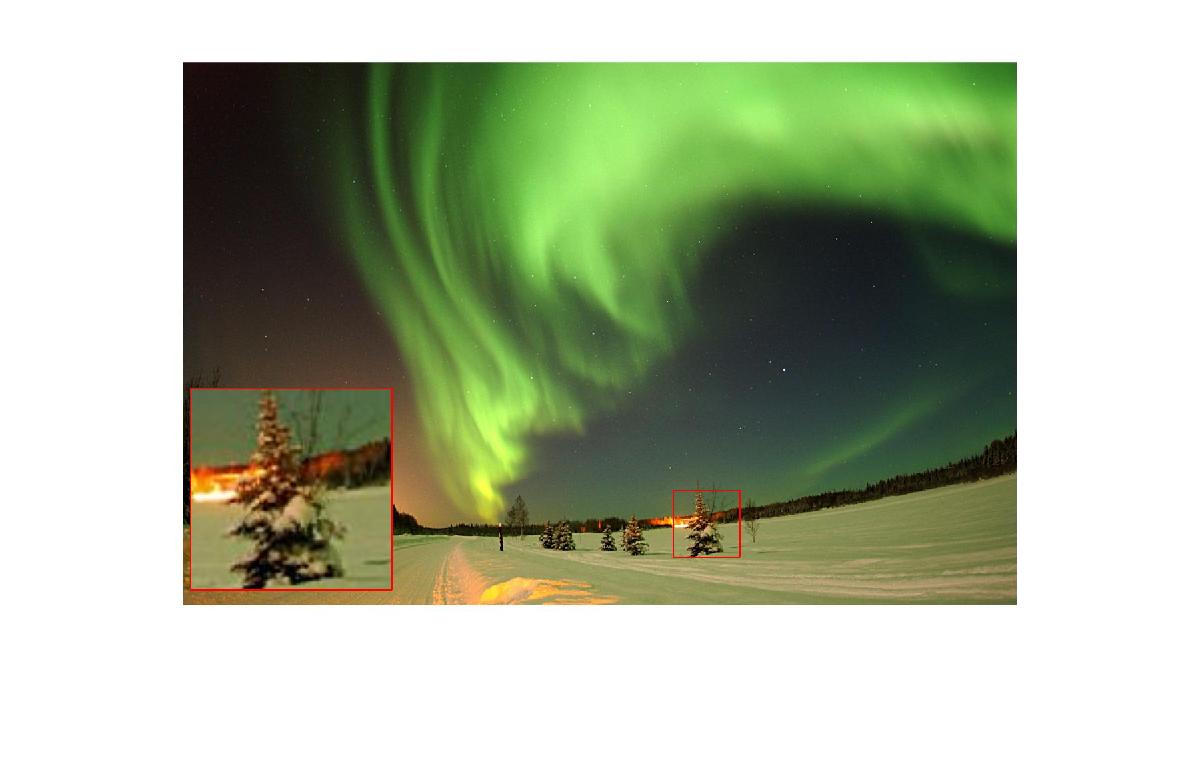}
  }
  \subfloat[]
  {
    \includegraphics[width=5cm]{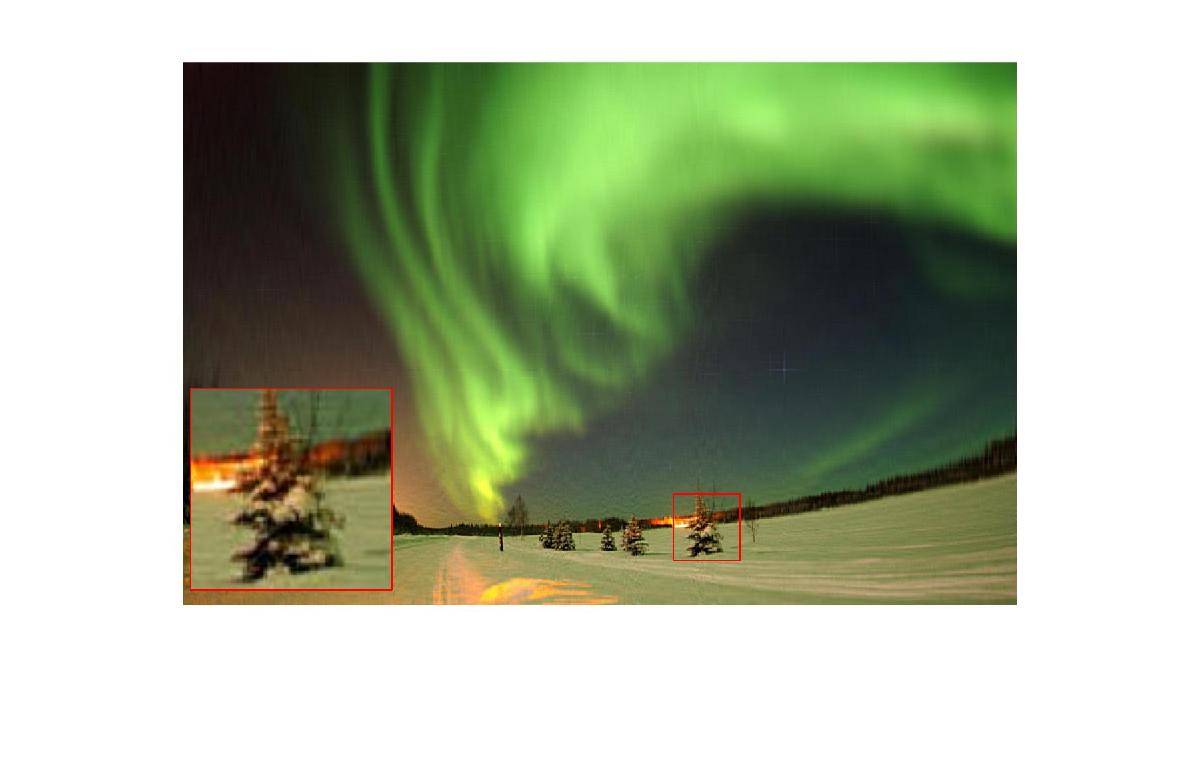}
  }
  \subfloat[]
  {
    \includegraphics[width=5cm]{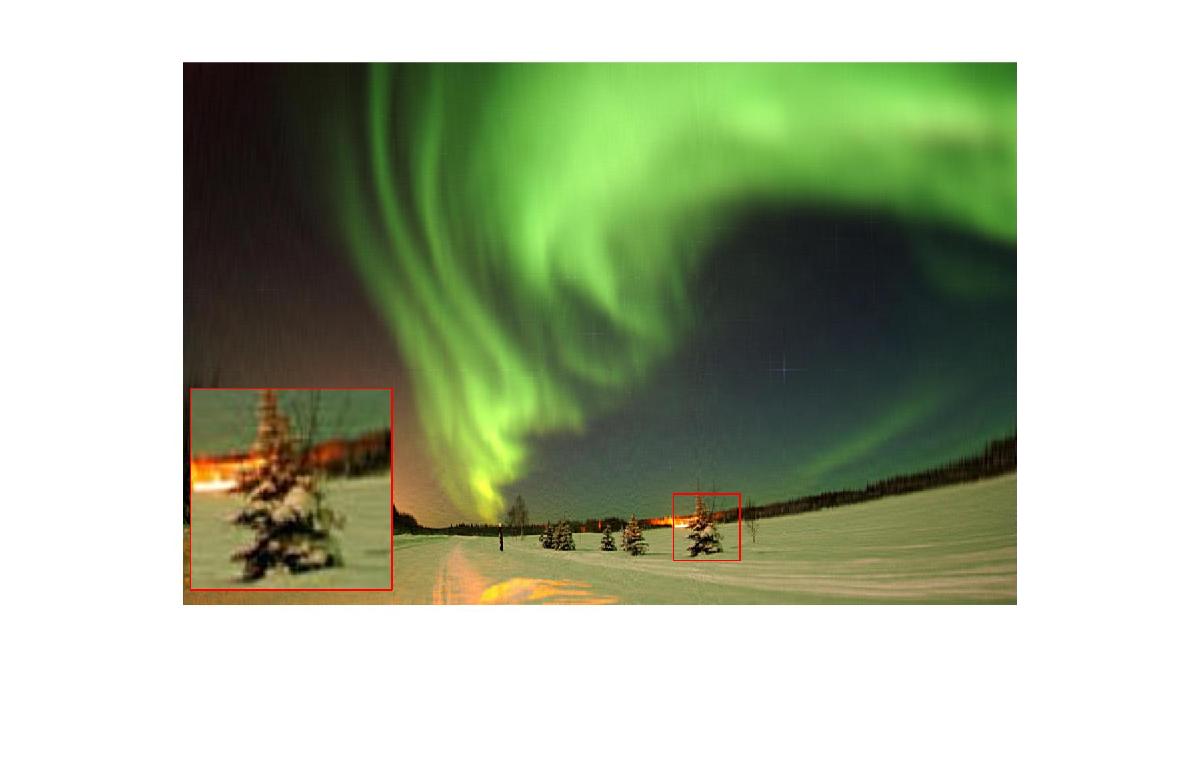}
  }
  
  \subfloat[]{
    \includegraphics[width=5cm]{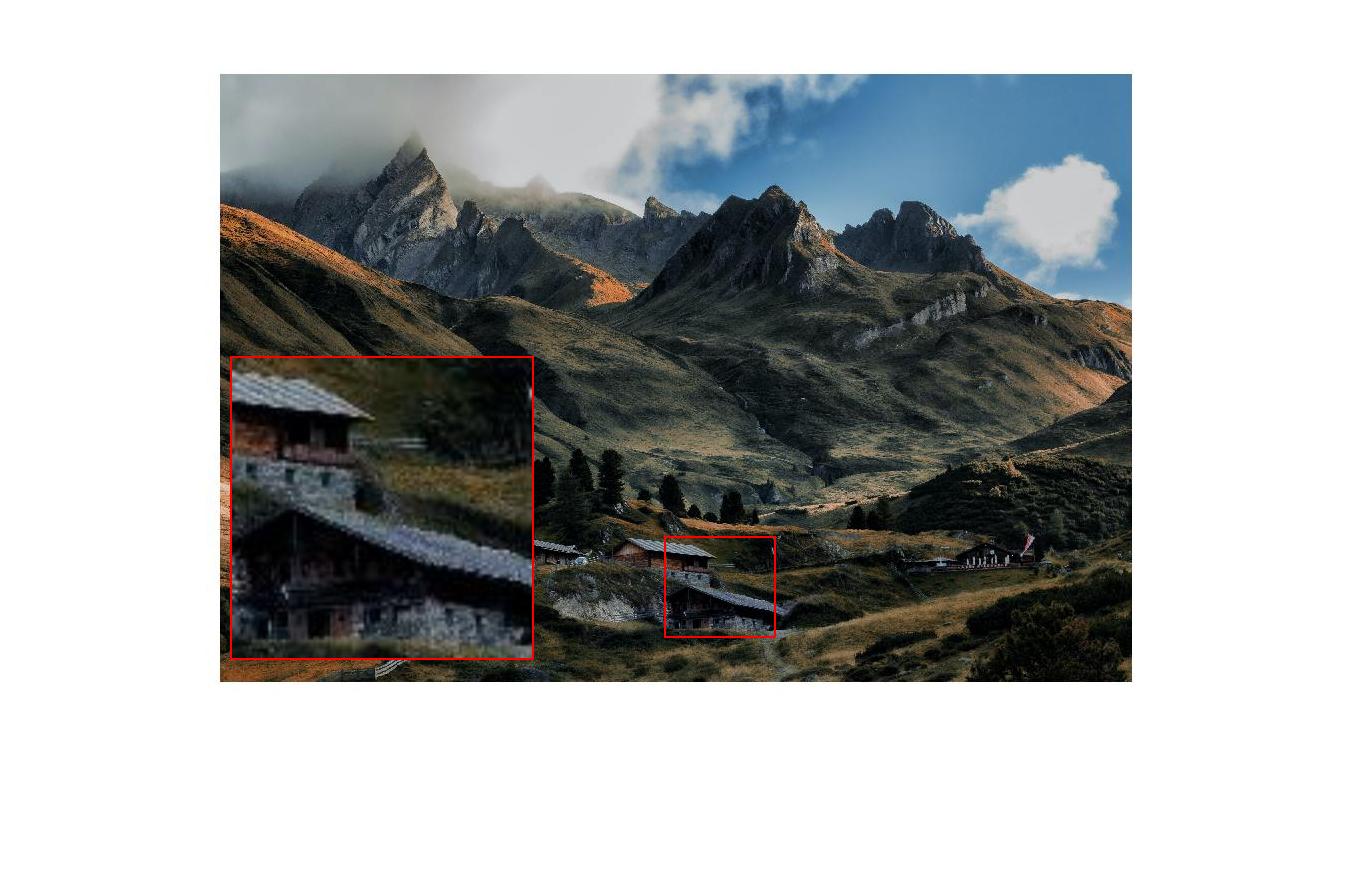}
  }
  \subfloat[]
  {
    \includegraphics[width=5cm]{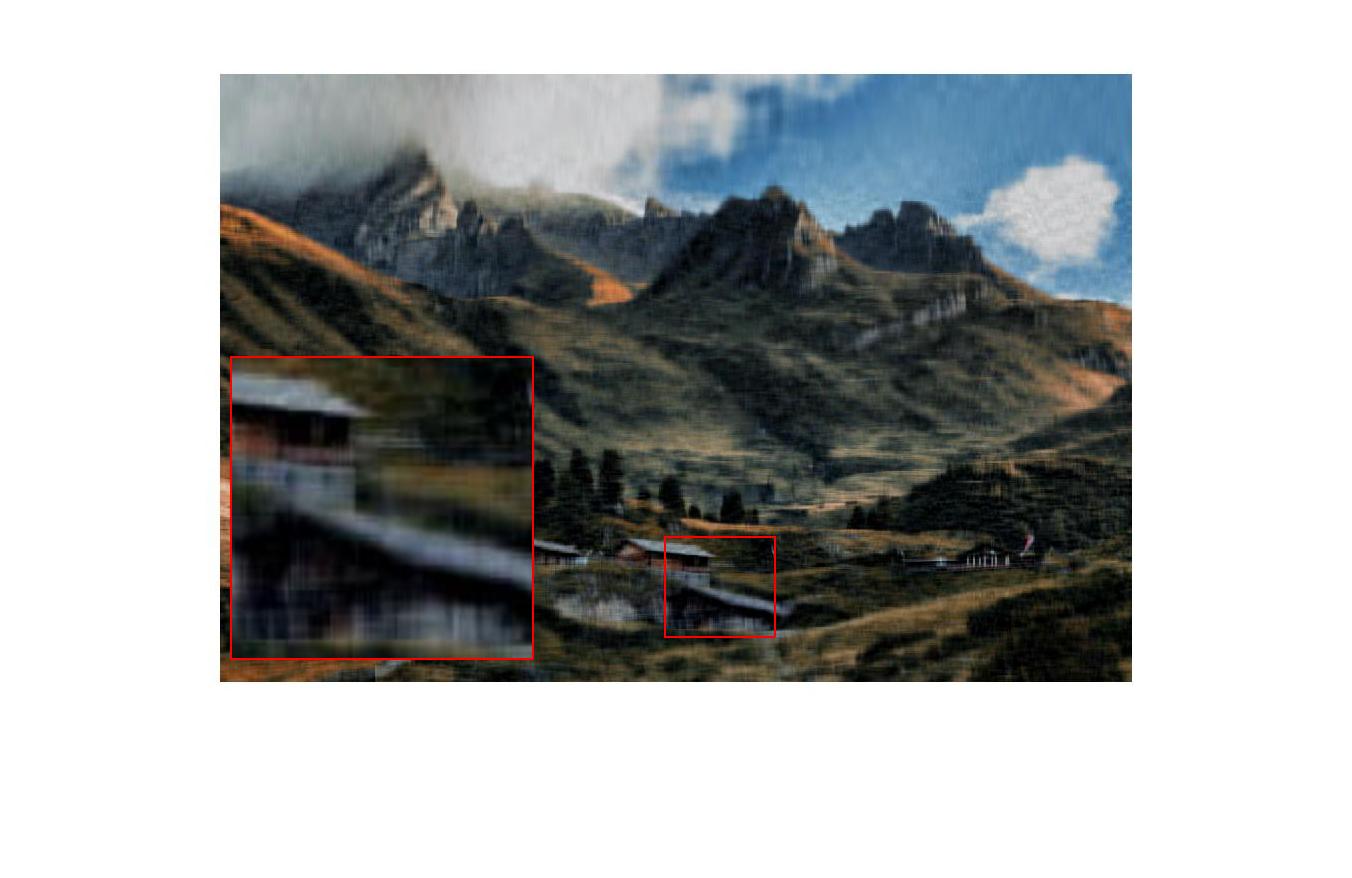}
  }
  \subfloat[]
  {
    \includegraphics[width=5cm]{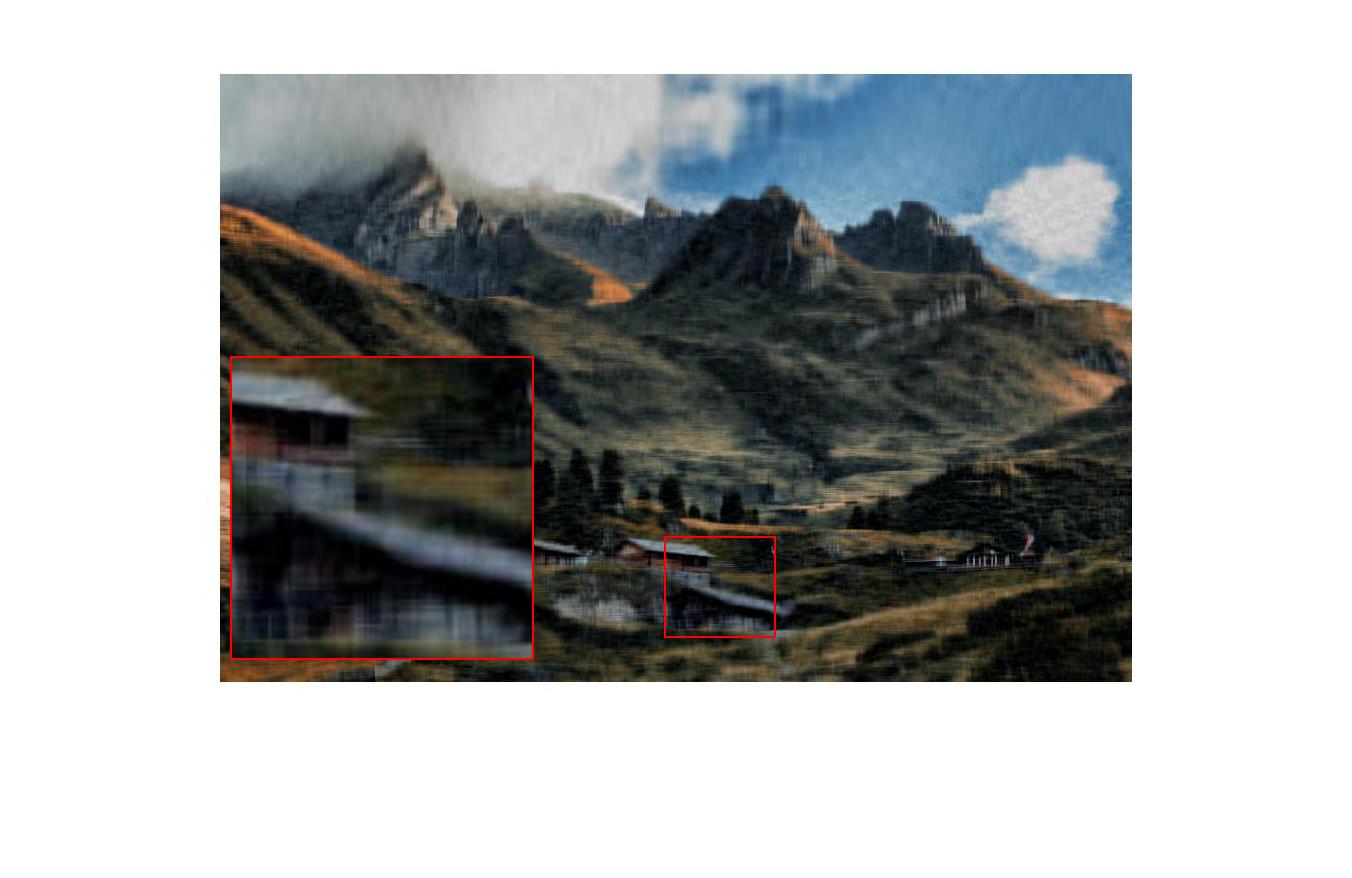}
  }
  
  \subfloat[]
  {
    \includegraphics[width=5cm]{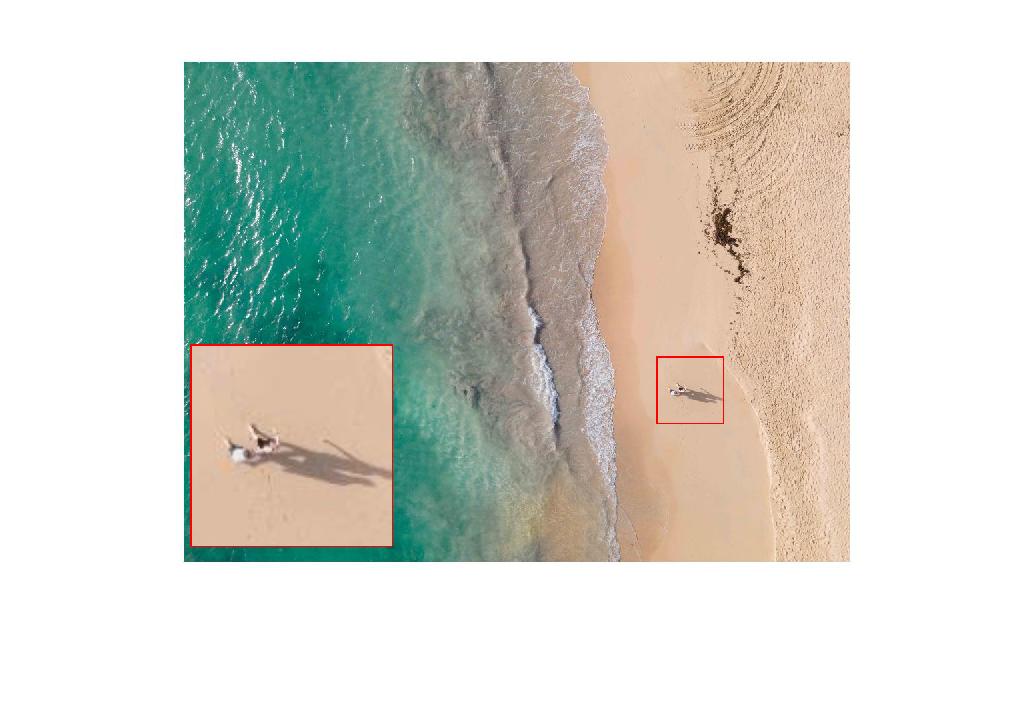}
  }
  \subfloat[]
  {
    \includegraphics[width=5cm]{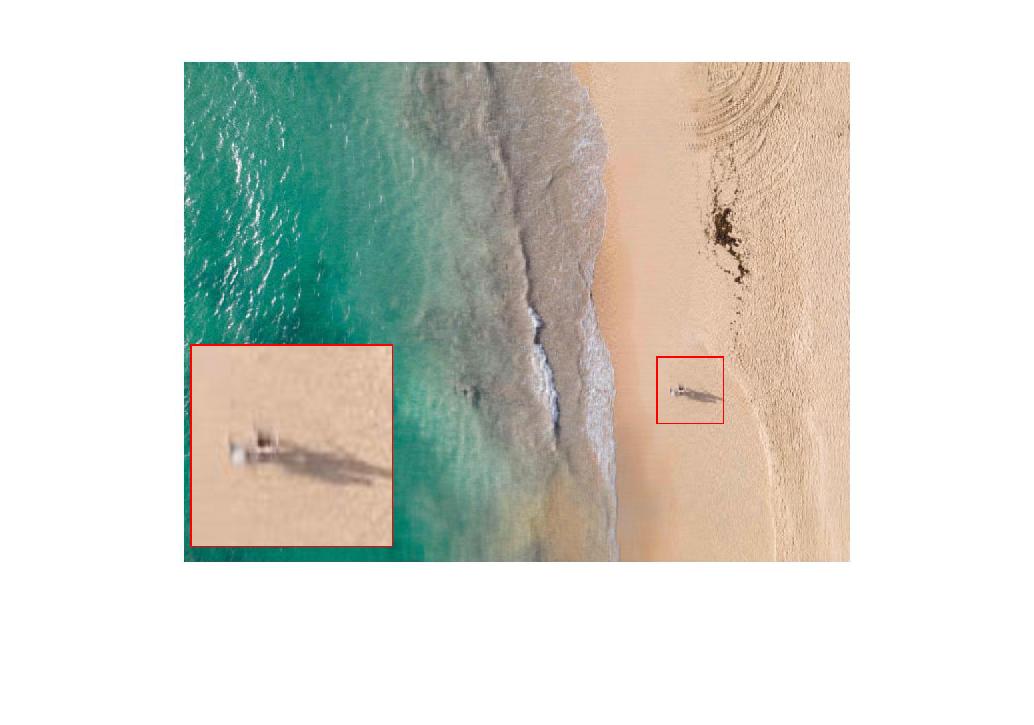}
  }
  \subfloat[]
  {
    \includegraphics[width=5cm]{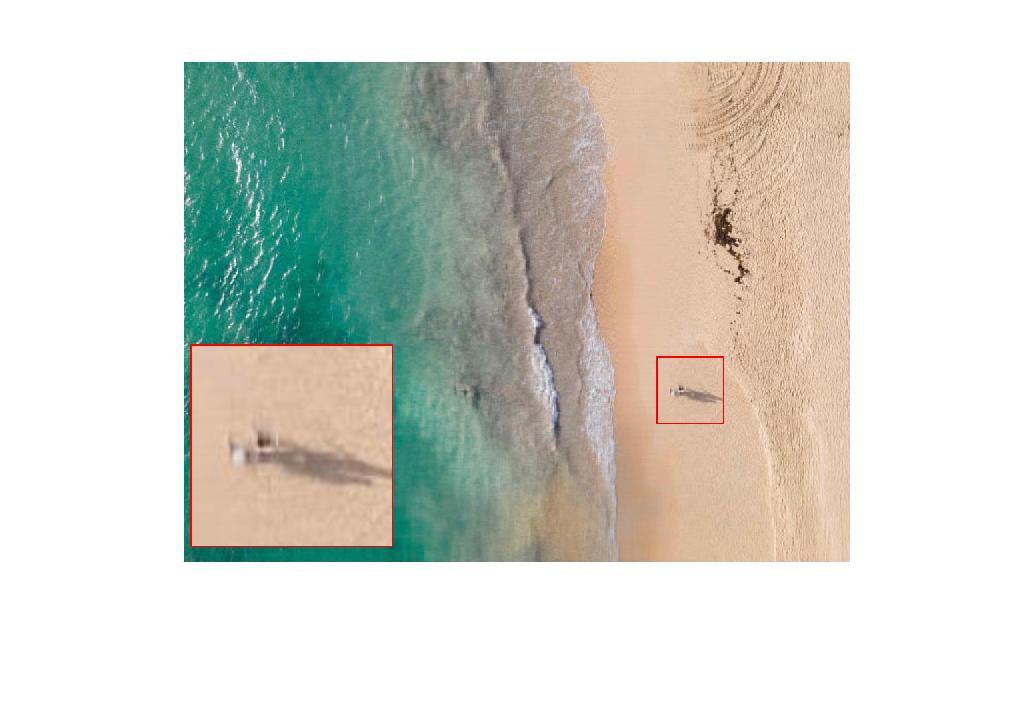}
  }
  
  \subfloat[]
  {
    \includegraphics[width=5cm]{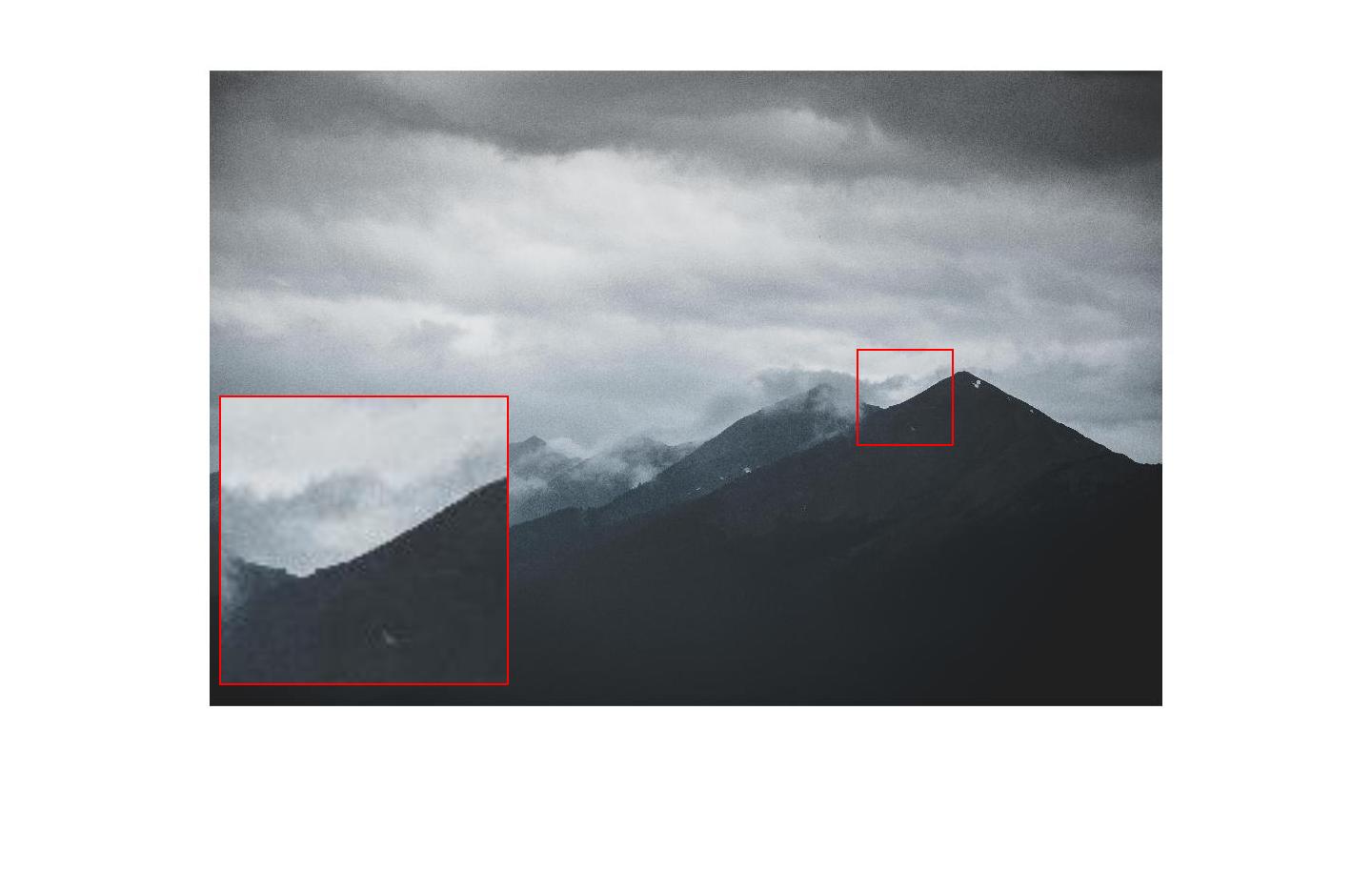}
  }
  \subfloat[]
  {
    \includegraphics[width=5cm]{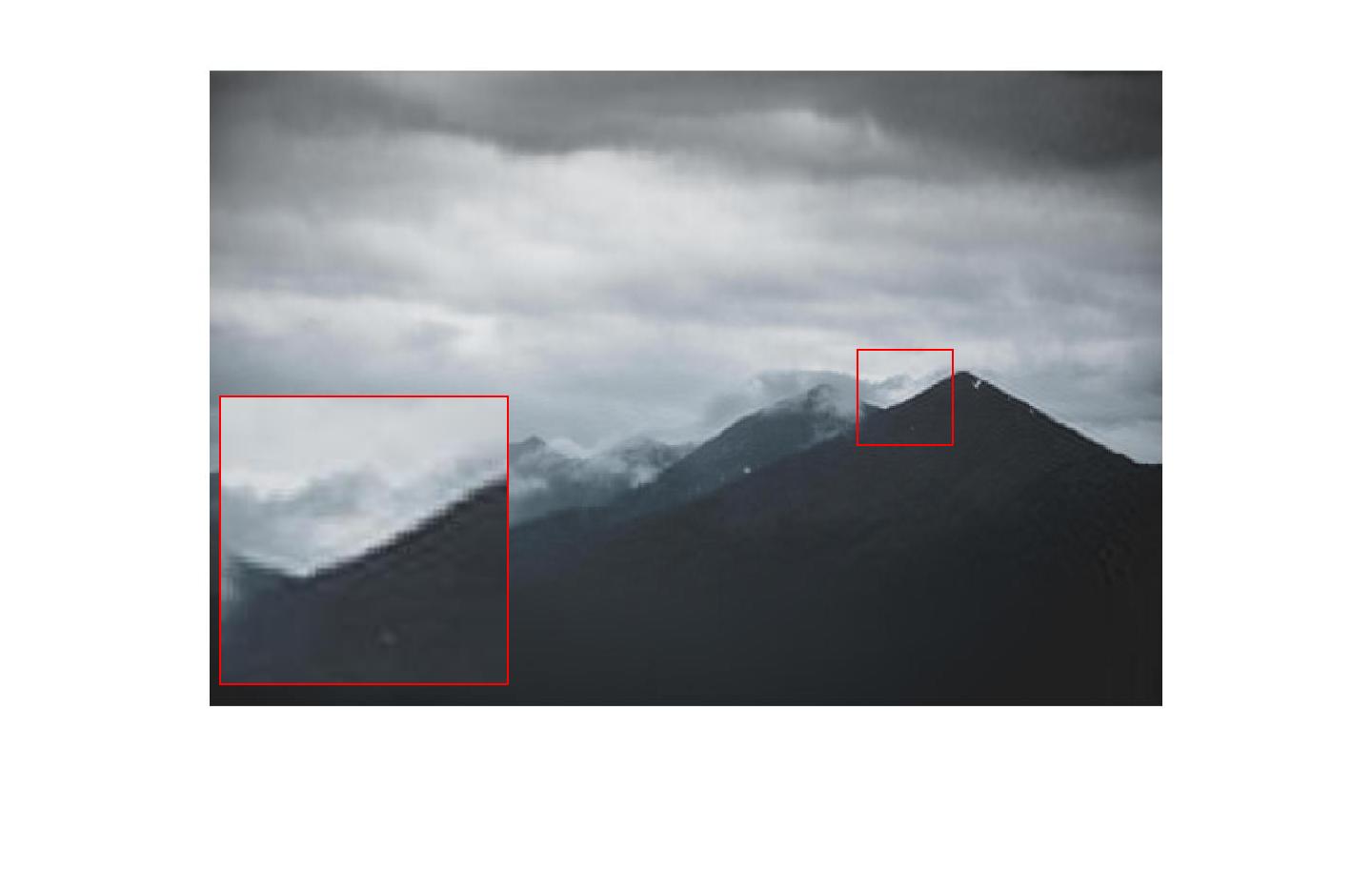}
  }
  \subfloat[]
  {
    \includegraphics[width=5cm]{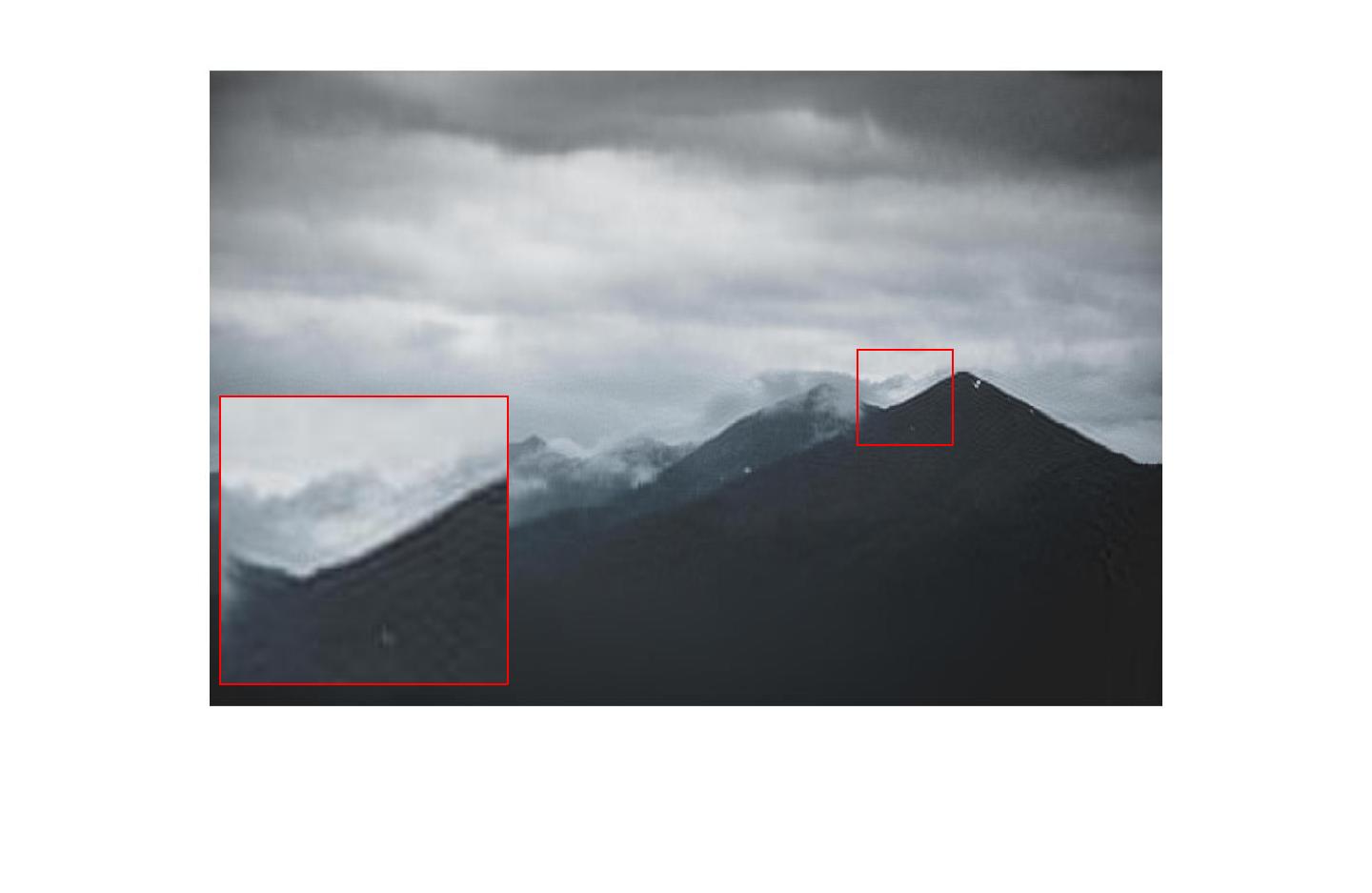}
  }
  
  \subfloat[]
  {
    \includegraphics[width=5cm]{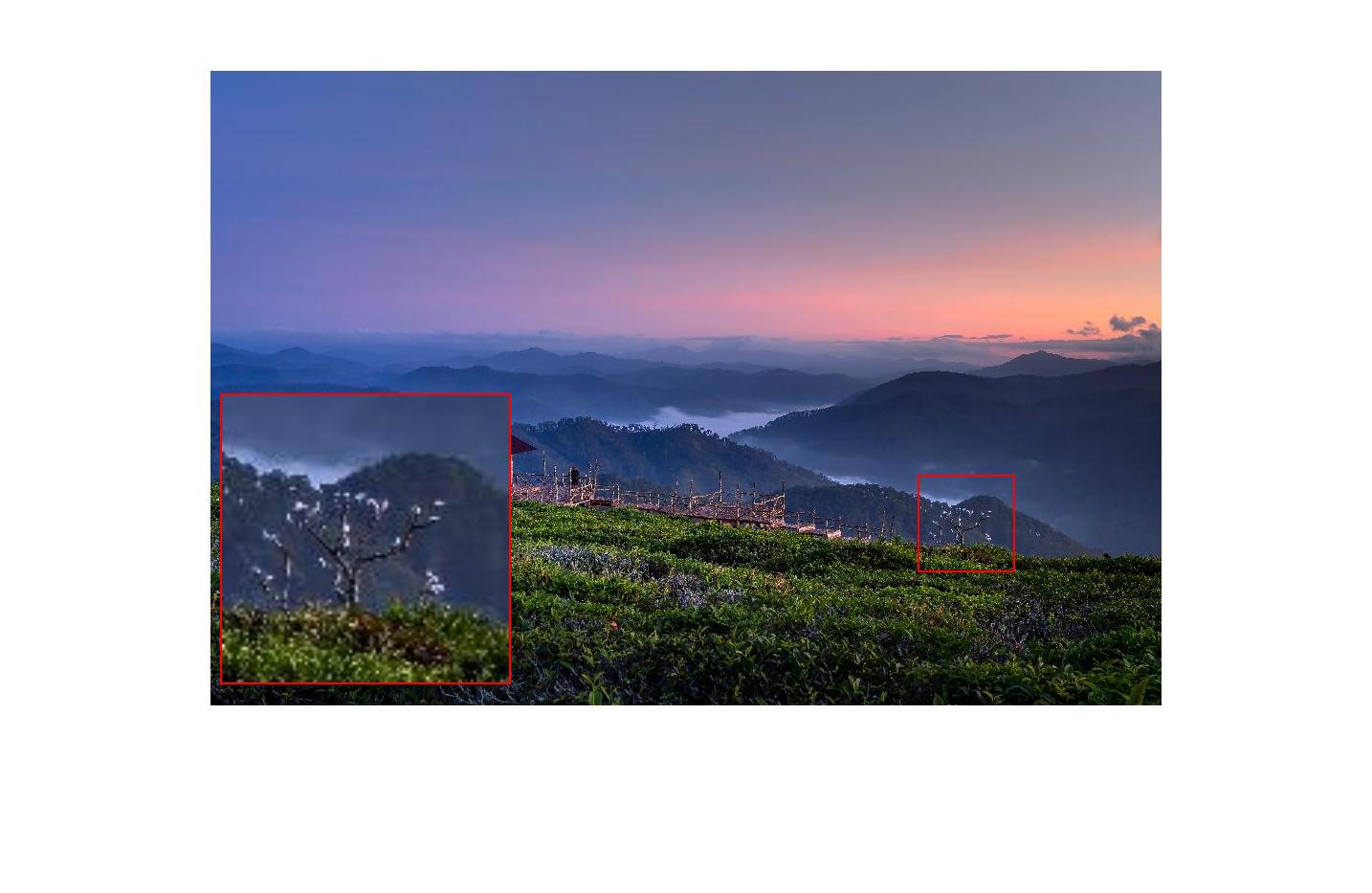}
  }
  \subfloat[]
  {
    \includegraphics[width=5cm]{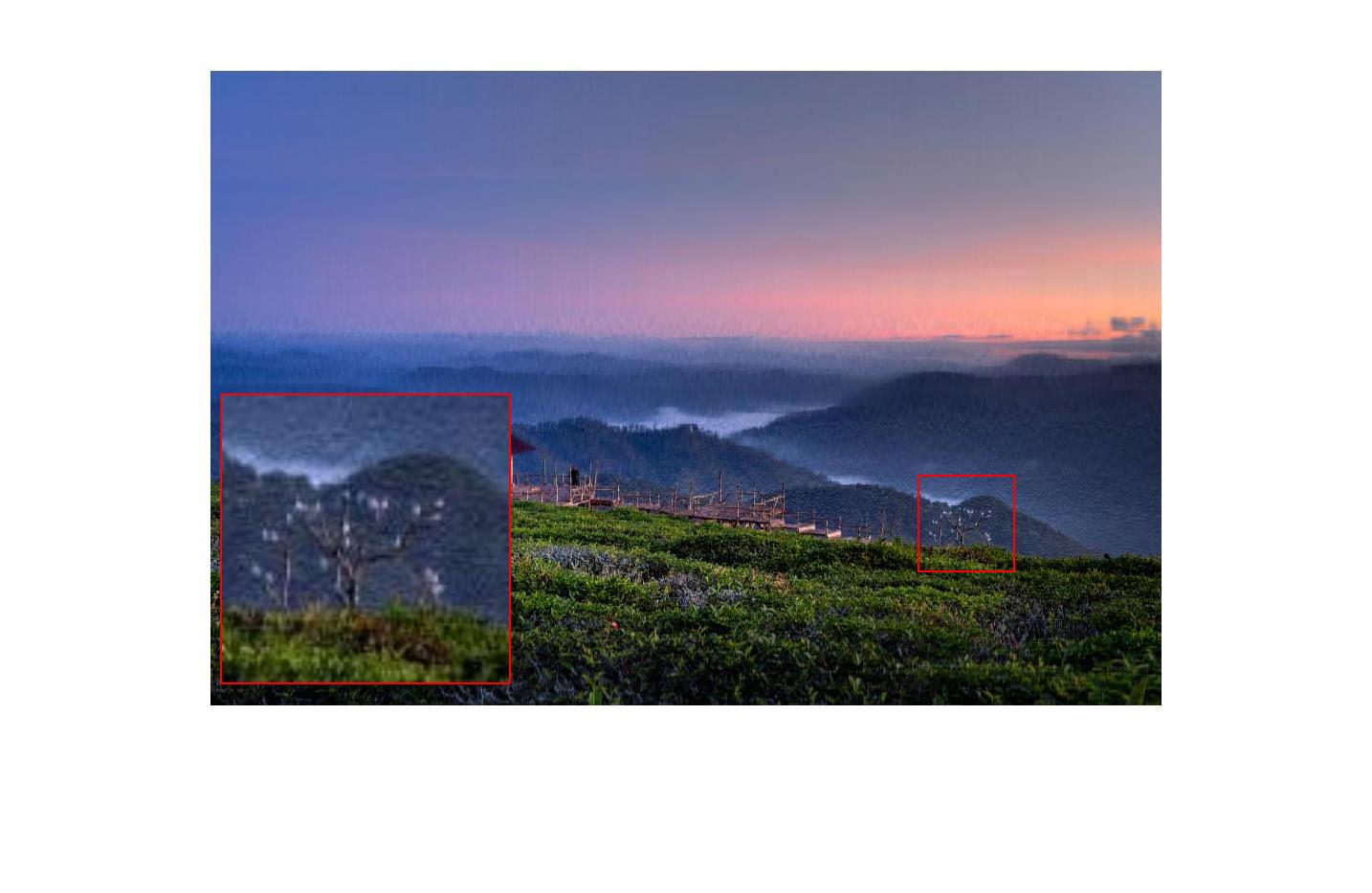}
  }
  \subfloat[]
  {
    \includegraphics[width=5cm]{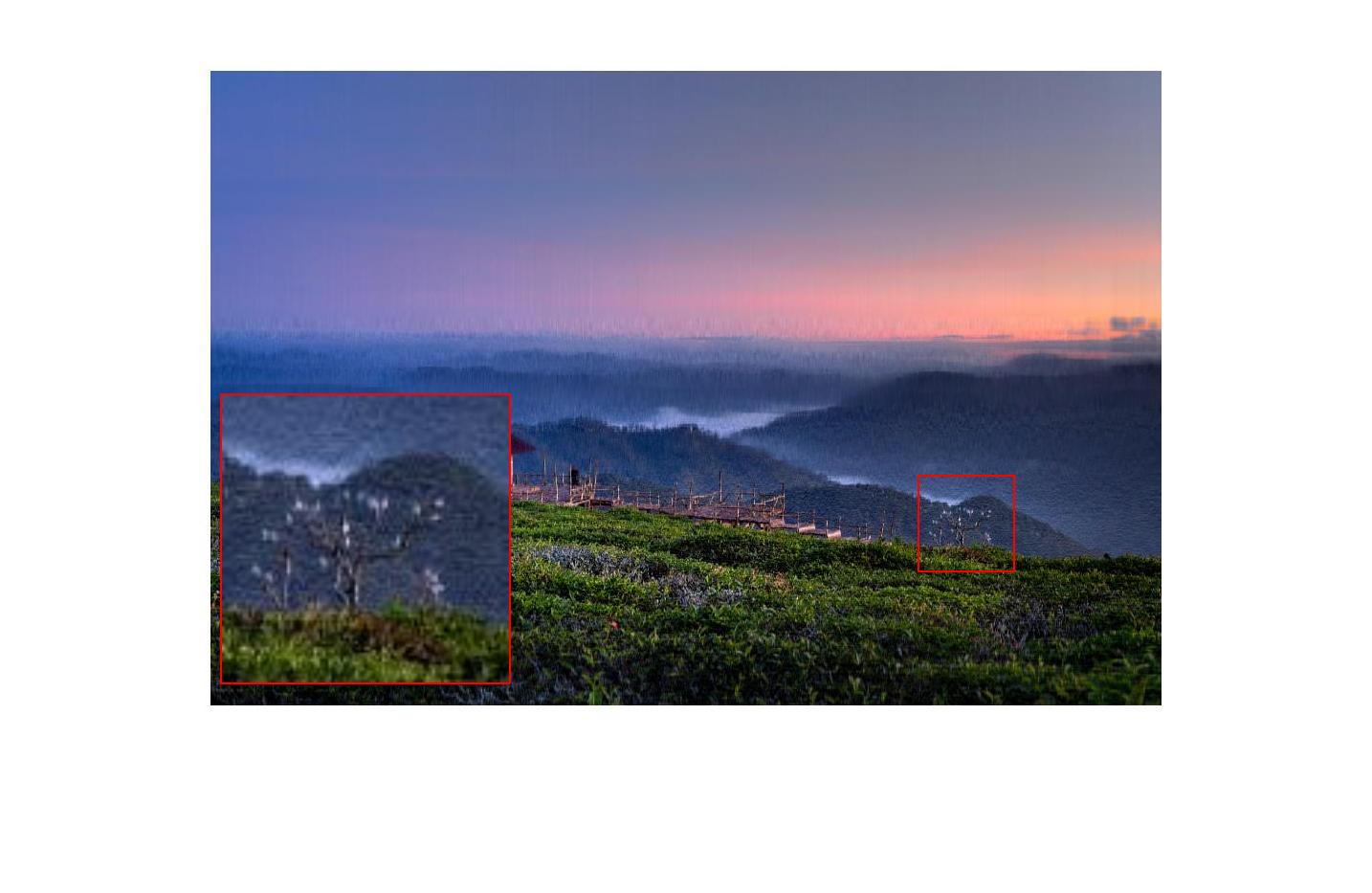}
  }
  \caption{The result images of numerical experiments. }
  \label{figure6}
\end{figure}
					
		\begin{table}[htbp] 
			\centering 	
			\caption{ The data results of numerical experiments.}
			\setlength{\tabcolsep}{5pt}	     	
			\scalebox{0.8}{	
				\renewcommand\arraystretch{1.5}  			
				\begin{tabular}{cccccc}
					\toprule \\							
			\textbf{Image Scale}	&$\mathbf{m_{2}}, \mathbf{n_{2}}$&  $\boldsymbol{R}$& \textbf{Index}& \textbf{TSTP-SVD} & \textbf{TT-SVD} \\			
					\midrule  \\ 
				\multirow{4}{*}{4887 $\times$ 7500 $\times$ 3 }& 	\multirow{4}{*}{3, 5} & 	\multirow{4}{*}{[50, 50, 50]$^{T}$}& TIME & 24.348120s & 43.017733s \\			
				& & & Related Error & 0.0442 & 0.0430\\			
					& & & PSNR & 35.1389 & 35.3425\\
					& & & SSIM & 0.9764 & 0.9773\\
						\midrule  \\ 
						\multirow{4}{*}{3000 $\times$ 3000 $\times$ 3}& 	\multirow{4}{*}{5, 5} & 	\multirow{4}{*}{	[50, 50, 50]$^{T}$}& TIME & 5.37095s & 10.489978s \\			
						& & & Related Error & 0.1311 & 0.1262\\			
						& & & PSNR & 26.2742 & 26.5784\\
						& & & SSIM & 0.7358 & 0.7538\\
							\midrule  \\ 
							\multirow{4}{*}{6000 $\times$ 8000 $\times$ 3}& 	\multirow{4}{*}{3, 4} & 	\multirow{4}{*}{	[100, 100, 100]$^{T}$}& TIME & 41.495969s & 110.945373s \\			
							& & & Related Error & 0.0597 & 0.0583\\			
							& & & PSNR & 28.9128& 29.0973\\
							& & & SSIM & 0.9548 & 0.9580\\			
								\midrule  \\ 
								\multirow{4}{*}{4000 $\times$ 6000 $\times$ 3}& 	\multirow{4}{*}{10,  10} & 	\multirow{4}{*}{	[50, 50, 50]$^{T}$}& TIME & 6.891970s & 28.234216s \\			
								& & & Related Error & 0.0396 & 0.0381\\			
								& & & PSNR & 33.5539 & 33.8216\\
								& & & SSIM & 0.8520 & 0.8551\\
									\midrule  \\ 
									\multirow{4}{*}{5304 $\times$ 7952 $\times$ 3}& 	\multirow{4}{*}{4, 4} & 	\multirow{4}{*}{	[100, 100, 100]$^{T}$}& TIME & 39.946396s & 97.926987s \\			
									& & & Related Error & 0.1173 & 0.1108 \\			
									& & & PSNR & 26.0268 & 26.5003\\
									& & & SSIM & 0.8696 & 0.8788\\													 		
					\bottomrule
				\end{tabular}}
				\label{table 5} 
			\end{table}  
	\noindent $\mathbf{Note}$ $\mathbf{3:}$ TSTP-SVD and TT-SVD in Table \ref{table 5} denote truncated STP-SVD and truncated T-SVD, respectively. \\	
    \indent It is obvious from the Table \ref{table 5} that the algorithm introduced in this paper saves a lot of operation time while achieving almost the same compression quality compared with T-SVD.
	\section{Conclusion}\label{sec:6} 
					\indent In this paper, we have introduced a new definition of third-order tensor semi-tensor product. By using this product, we also presented a new type of tensor decomposition strategy and gave the specific algorithm. This decomposition strategy actually generalizes the matrix SVD based on semi-tensor product to third-order tensors. The new decomposition model can achieve data compression to a great extent on the basis of the existing tensor decomposition algorithms. We gave the theoretical analysis and verified it with numerical experiments in our paper.\\
					\indent The advantage of the algorithm which based on semi-tensor product of tensors is that it can achieve data compression by reducing the data storage. The decomposition strategy in this paper will reduce the number of calculations and storage space so that speeding up the operation when we decompose a third-order tensor. While our algorithm certainly has the limitation, that is, we can only approximately decompose the matrix anyway when we decompose a matrix on the basis of semi-tensor product. This will produce errors when we decompose tensors. However, if each frontal slice of target tensor is a low-rank matrix, our algorithm will get a great result. Our decomposition model still has room for improvement. We will investigate how to build a decomposition model based on semi-tensor product for $p$-th order tensors with $p>3$ in future work. And we will consider whether there will be special treatment ideas for tensors with special structures. 





\bibliographystyle{plain} 
					\bibliography{Ref}
\end{document}